\title{The sparsity of character tables over finite reductive groups and its additive analogue}
\author{GyeongHyeon Nam$^1$}
\author{Anna Pusk\'{a}s$^2$}
\address{$^1$Department of Mathematics and Systems Analysis, Aalto University, Espoo 02150, Finland}
\email{\href{mailto:gyeonghyeon.nam@aalto.fi}{gyeonghyeon.nam@aalto.fi}}
\address{$^2$School of Mathematics \& Statistics,  University of Glasgow,  G12 8QQ, Glasgow, Scotland}
\email{\href{mailto:anna.puskas@glasgow.ac.uk}{anna.puskas@glasgow.ac.uk}}
\newcommand{\vo}{\vec{o}\@ifnextchar{^}{\,}{}}
\theoremstyle{plain}
\newtheorem{thm}{Theorem}[subsection]
\newtheorem{lem}[thm]{Lemma}
\newtheorem{prop}[thm]{Proposition}
\newtheorem{conj}[thm]{Conjecture}
\newtheorem{cor}[thm]{Corollary}
\newtheorem{ques}[thm]{Question}
\theoremstyle{definition}
\newtheorem{defe}[thm]{Definition}
\theoremstyle{remark}
\newtheorem{rem}[thm]{Remark}
\definecolor{red}{rgb}{1,0,0}
\definecolor{orange}{rgb}{1,0.5,0}
\definecolor{purple}{rgb}{.5,.2,.8}
\definecolor{blue}{rgb}{.2,.2,.8}
\definecolor{green}{rgb}{.4,.6,.4}
\def\bes{\begin{equation*}}  \def\ees{\end{equation*}} 
\def\bi{\begin{itemize}}   \def\ei{\end{itemize}}
\def\ba{\begin{eqnarray}} \def\ea{\end{eqnarray}}    
\def\bl{\begin{align}}    \def\el{\end{align}}       
\def\bls{\begin{align*}}    \def\els{\end{align*}}
\newcommand{\bC}{\mathbb{C}}
\newcommand{\chG}{\check{G}}
\newcommand{\GL}{\mathrm{GL}}
\newcommand{\Z}{\mathbb{Z}}
\newcommand{\fg}{\mathfrak{g}}
\newcommand{\ft}{\mathfrak{t}}
\newcommand{\N}{\mathbb{N}} 
\newcommand{\R}{\mathbb{R}} 
\begin{document}

\begin{abstract}
We consider the proportion of zero entries in the character table of a sequence of reductive groups over a finite field. We prove an asymptotic lower bound when the reductive group is fixed and the size of the finite field increases. Furthermore, we prove that when considering a sequence of reductive groups with increasing semisimple rank, the proportion is asymptotically one. 
We also establish an additive analogue of this phenomenon in the context of a fixed reductive Lie algebra. \end{abstract}


\maketitle

\tableofcontents

 \section{Introduction}

In the representation theory of finite groups, determining all character values is a natural and fundamental task, yet it remains a challenging one. The work of Deligne and Lusztig provides deep results on this problem in the context of finite reductive groups. However, as far as we know, there are some missing links to compute every character value explicitly. 
In this paper, we study the character tables of finite reductive groups from the viewpoint of arithmetic statistics. 

Similar work was carried out in \cite{larsen2020sparsity} and \cite{gallagher2022many}, where the authors proved results using characteristic polynomials. Our method is based on Deligne-Lusztig theory and regular semisimple elements. 
This approach has two advantages: it is independent of Cartan type, and it allows us to also consider an additive analogue of the question. The additive analogue uses the Fourier transform on characteristic functions of the finite Lie algebra.
This allows us to study reductive groups and their Lie algebras together, and the proofs in the two settings follow along very similar lines.

 \subsection{Overview}\label{ss:overview}  
We first provide an overview of this paper and then explain our main results in the following subsections.

\subsubsection{}\label{sss:lowerboundintro} We consider a lower bound for the following limit, where $G$ is a fixed connected reductive group over a finite field:
\begin{equation}\label{eq:introbrief}
 \underset{q\rightarrow \infty}{\lim}\frac{|\{(\chi,O) \in \widehat{G(\mathbb{F}_q)}\times [G(\mathbb{F}_q)]\,|\, \chi(O)=0\}|}{|\widehat{G(\mathbb{F}_q)}|\times |[G(\mathbb{F}_q)]|},
\end{equation}
where $\widehat{H}$ is the set of irreducible complex characters of a finite group $H$ and $[H]$ the set of conjugacy classes. For detailed definition of the sequence, limit and result, please see Theorem \ref{thm:fixedgroup}.
We provide some examples as follows.
\begin{enumerate}
\item[(i)] When $G=\GL_2$, we have that 
\[
 \underset{q\rightarrow \infty}{\lim}\frac{|\{(\chi,O) \in \widehat{G(\mathbb{F}_q)}\times [G(\mathbb{F}_q)]\,|\, \chi(O)=0\}|}{|\widehat{G(\mathbb{F}_q)}|\times |[G(\mathbb{F}_q)]|}=  \underset{q\rightarrow \infty}{\lim}\frac{q^2-2q+2}{2(q+1)^2}=\frac{1}{2}.
\]
\item[(ii)] When $G=\GL_3$, we have that 
\[
 \underset{q\rightarrow \infty}{\lim}\frac{|\{(\chi,O) \in \widehat{G(\mathbb{F}_q)}\times [G(\mathbb{F}_q)]\,|\, \chi(O)=0\}|}{|\widehat{G(\mathbb{F}_q)}|\times |[G(\mathbb{F}_q)]|}=  \underset{q\rightarrow \infty}{\lim}\frac{11q^4-2q^3+14q^2-45q-18}{18q^2(q+1)^2}=\frac{11}{18}.
\]
\end{enumerate}
We give a lower bound of the limit (equation \eqref{eq:introbrief}) using the Weyl group of $G$.  
 
\subsubsection{}\label{sss:111}
Our next result states, in some sense, that the nonzero entries in the character table are asymptotically sparse. This means that if we take a sequence of finite reductive groups whose semisimple rank approaches infinity, then the limit of the proportion of vanishing character values is one. For precise details please refer to Theorem \ref{thm:main}. Note that it would be an interesting problem to compute this limit for the sequence of symmetric or alternating groups from \cite{larsen2020sparsity}.

 \subsubsection{}
 To study problems in Section \ref{sss:lowerboundintro} and \ref{sss:111}, we use Deligne-Lusztig theory in the following way. We consider irreducible characters that result from induction from a maximal torus. These characters vanish on regular semisimple elements of the group in other maximal tori, as long as the two maximal tori are not conjugate. Counting the zero entries corresponding to such pairs is the main content of the proof. Conjugacy classes of maximal tori of a finite reductive group are in bijection with conjugacy classes in the Weyl group. Therefore, the computation is ultimately reduced to an estimate on the probability that two elements of the Weyl group are in the same conjugacy class. This probability is computed in a result by \cite{blackburn2012probability}.

\subsubsection{}  It is well-known that a reductive group and its Lie algebra interact closely, so we may consider the same problem on the corresponding finite Lie algebra. This approach becomes possible by using the Fourier transform on finite Lie algebras, cf. \cite{Letellier05}. This transform produces an analogue of irreducible complex characters of the finite reductive group. This parallel is exploited in many places, for example, \cite{HLRV, KNWG}.
We give an additive analogue of Section \ref{sss:lowerboundintro} over a fixed Lie algebra using this Fourier transform as detailed in Theorem \ref{thm:fixedliealgebra}.

Finally, we consider an additive analogue of Section \ref{sss:111} with a sequence of finite Lie algebras, see Conjecture \ref{conj:additiveanalogue}. We anticipate that this problem can be solved by computing uniform bounds of the number of adjoint orbits and regular semisimple orbits of any finite Lie algebra. A detailed explanation is given in Section \ref{sss:conjectureexplanation}.

\subsubsection{Notation}

Let us introduce some notation. Let $G$ be a (untwisted) connected reductive group, $W(G)$ the Weyl group of $G$, $\tau(G)$ the number of conjugacy classes in $W(G)$, and $w_1, \ldots , w_{\tau(G)}\in W(G)$ representatives of distinct conjugacy classes in $W(G)$. If the group is clear in the context, we drop $(G)$. For a finite group $H$, recall that we denote the set of conjugacy classes of $H$ as $[H]$ and the set of irreducible (complex) characters of $H$ as $\widehat{H}$.

\subsection{A fixed connected reductive group}\label{ss:fixedgroupintro}In this subsection, we will fix a (untwisted) connected reductive group $G$ and increase the size of the finite field. 
Then the following theorem is our first main result.

\begin{thm}\label{thm:fixedgroup}
Let us consider the following cases.

\begin{enumerate}
\item[(F1)] We have a group defined as a group scheme ${\mathbb{G}}$ over $\Z $ and we consider finite fields ${\mathbb{F}}_n,$ with $|{\mathbb{F}}_n|=q_n$ for $n=1,2,3,\ldots $ such that $q_n\rightarrow \infty $ as $n\rightarrow \infty$.
Here, $q_n$ is a power of a prime $p_n$ for all $n$. Then let us take $k_n=\overline{\mathbb{F}_{p_n}}$, and we consider the sequence of tensor products $\{G_n^{F_n}\}_{n=1}^\infty$, where $G_n=\mathbb{G}\otimes_{\mathbb{Z}} k_n$ a group scheme over $k_n$ and the Frobenius map $F_n=F_{q_n}$.  

\item[(F2)]  We have a connected reductive group $G$ defined over a field $k,$ where $k$ is algebraically closed, ${\mathrm{char}}\ k=p>0,$  $q=p^e$ for some $e\in \Z_{>0}$, and $G^{F_{q^n}}=G^{({F_q}^n)}$ for $F_q^n\colonequals \underset{n\text{-times}}{\underbrace{F_q\circ \cdots \circ F_q}}$. Then we consider the sequence $\{G^{F_{q^n}}\}_{n=1}^\infty$.
\end{enumerate}

We assume that $G$ and $\mathbb{G}$ are a (untwisted) connected reductive group and group scheme, whose centres are connected and whose derived subgroups are simply connected. 
Then when every characteristic $p_n$ and $p$ are very good for $G_n$ and $G$, we have the following results:
    \[    \underset{n\rightarrow \infty}{\lim}\frac{|\{(\chi,O) \in \widehat{G_n^{F_n}}\times [G_n^{F_n}]\,|\, \chi(O)=0\}|}{|\widehat{G_n^{F_n}}|\times |[G_n^{F_n}]|} \geq 1- \sum_{i=1}^{\tau }\frac{1}{|C_{W }(w_{i})|^2}
    \]
    and  
\[
    \underset{n\rightarrow \infty}{\lim}\frac{|\{(\chi,O) \in \widehat{G^{F_q^n}}\times [G^{F_q^n}]\,|\, \chi(O)=0\}|}{|\widehat{G^{F_q^n}}|\times |[G^{F_q^n}]|} \geq 1-\sum_{i=1}^{\tau}\frac{1}{|C_{W}(w_{i})|^2}.
  \]
       Note that in case $\mathrm{(F1)}$, the Weyl groups of $G_n$ are all same since they are determined by $\mathbb{G}$, so let us denote this Weyl group as $W $ and the number of its conjugacy class as $\tau$.
\end{thm}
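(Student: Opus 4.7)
The plan is to bound the number of non-vanishing pairs $(\chi,O)$ from above by $q^{2r}\sum_{i=1}^{\tau}|C_W(w_i)|^{-2}\bigl(1+o(1)\bigr)$ as $q\to\infty$, while observing that the total count $|\widehat{G^F}|\cdot|[G^F]|$ is asymptotic to $q^{2r}$, where $r$ is the absolute rank of $G$. Here $q=q_n\to\infty$ in case (F1) and $q=q^n\to\infty$ in case (F2); the Weyl-group data are fixed throughout. The bound then realizes the right-hand side as the probability that two uniformly random elements of $W$ are conjugate, which by the class equation equals $\sum_{i=1}^{\tau}|C_W(w_i)|^{-2}$.

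\textbf{Step 1: Vanishing via the Deligne--Lusztig character formula.} For each conjugacy class $[w_i]\subset W$ (the $F$-action on $W$ is trivial since $G$ is untwisted), fix an $F$-stable maximal torus $T_{w_i}$ in the corresponding $G^F$-class. Let $\Irr_i\subset\widehat{G^F}$ denote the set of irreducible characters of the form $\pm R_{T_{w_i}}^G(\theta)$ for some $\theta\in\widehat{T_{w_i}^F}$ in general position (trivial stabilizer in $W_G(T_{w_i})^F$), and let $[G^F]_i$ denote the set of $G^F$-conjugacy classes of regular semisimple elements meeting $T_{w_i}^F$. The Deligne--Lusztig character formula, evaluated on a regular semisimple element of $T_{w_j}^F$, vanishes unless $T_{w_i}$ is $G^F$-conjugate to $T_{w_j}$; hence $\chi(O)=0$ whenever $\chi\in\Irr_i$, $O\in[G^F]_j$ and $[w_i]\neq[w_j]$. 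Any non-vanishing pair with $\chi\in\Irr_i$ and $O\in[G^F]_j$ therefore forces $i=j$.

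\textbf{Step 2: Asymptotic enumeration and conclusion.} Since $|T_{w_i}^F|=q^r+O(q^{r-1})$ and $W_G(T_{w_i})^F$, of order $|C_W(w_i)|$, acts freely on both regular characters of $T_{w_i}^F$ and regular semisimple elements of $T_{w_i}^F$, we get
\[
|\Irr_i| = \frac{q^r}{|C_W(w_i)|}+O(q^{r-1}),\qquad |[G^F]_i|=\frac{q^r}{|C_W(w_i)|}+O(q^{r-1}).
\]
Combined with the identity $\sum_i|C_W(w_i)|^{-1}=1$ and the uniform bound (see below) that the characters outside $\bigsqcup_i\Irr_i$ and the non-regular-semisimple conjugacy classes each number $O(q^{r-1})$, this yields $|\widehat{G^F}|=|[G^F]|=q^r+O(q^{r-1})$ and
\[
\#\{(\chi,O):\chi(O)\neq 0\} \leq \sum_{i=1}^{\tau}|\Irr_i|\cdot|[G^F]_i|+O(q^{2r-1}) = q^{2r}\sum_{i=1}^{\tau}|C_W(w_i)|^{-2}+O(q^{2r-1}).
\]
Dividing by $q^{2r}+O(q^{2r-1})$ and passing to the limit delivers the stated lower bound for the proportion of zero entries; the argument is identical in cases (F1) and (F2).

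\textbf{Main technical point.} The delicate input is the uniform control invoked above: bounding the characters outside $\bigsqcup_i\Irr_i$ and the non-regular-semisimple conjugacy classes each by $O(q^{r-1})$, with implicit constants independent of $q$. This relies on the full Lusztig parametrization of $\widehat{G^F}$ via semisimple classes in the dual group $G^{*F^*}$, together with standard estimates on centralizers of non-regular semisimple elements. It is precisely at this point that the hypotheses on the center, the derived group, and the very-goodness of $p$ are essential, as they guarantee a clean parametrization and ensure that the relevant centralizers in $G^*$ are connected reductive of controlled size.
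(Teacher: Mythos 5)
Your proposal is correct and follows essentially the same route as the paper: the vanishing of general-position Deligne--Lusztig characters on regular semisimple classes of non-conjugate tori, the counts $\sim q^r/|C_W(w_i)|$ of such characters and classes per torus type (via the dual-group parametrization), and the fact that the remaining characters and non-regular-semisimple classes are of lower order $O(q^{r-1})$. Phrasing it as an upper bound on the non-vanishing pairs rather than a lower bound on the vanishing pairs is just complementary bookkeeping, and the "main technical point" you flag is exactly what the paper supplies through its Propositions on counting general-position characters, regular semisimple classes, and total class numbers.
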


\subsection{The sparsity of character tables}\label{ss:mainthmintro}

Now, we take a sequence of connected reductive groups such that the semisimple ranks tend to infinity.
Then we have the following result. 
\begin{thm}\label{thm:main}Let $\{G_n\}_{n=1}^\infty$ be a sequence of (untwisted) connected reductive groups defined over an algebraically closed field $k_n$ of positive characteristic with the Frobenius map $F_n\colonequals F_{q_n}$ (where $q_n$ is a power of a prime $p_n$), whose centre of $G_n$ is connected and whose derived subgroup is simply connected for every $n$.  	

	For almost all n of $\{G_n\}_{n=1}^\infty$, let us assume that the size of $k_n^{F_n}=q_n$ satisfies the following:
	\begin{enumerate}
\item[(S1)] There is a function $f:\Z_{>0}\rightarrow \R_{>1}$ such that $r\cdot f(r)$ is increasing on $\Z_{>0},$ $\lim_{r\rightarrow \infty }f(r)=\infty $ and $q_n\geq l_n\cdot f(l_n)$ for $l_n=\mathrm{rank}\ [G_n,G_n].$
\item[(S2)] For every maximal torus $T$, the finite torus $T^{F_n}$ is non-degenerate (for the definition, see \cite[\S3.6]{carter1985finite}) and contains regular elements. 
\item[(S3)] The characteristic $p_n$ is very good for $G_n$ and $q_n> |l_n!|_{p_n}.$ (Here $|l_n!|_{p_n}$ is the highest power of $p_n$ that divides $l_n!$)
	\end{enumerate}
Then we have
    \begin{equation}\label{eq:mainthm1}
    \underset{n \rightarrow \infty}{\mathrm{lim}}\frac{|\{(\chi,O) \in \widehat{G_n^{F_n}}\times [G_n^{F_n}]\,|\, \chi(O)=0\}|}{|\widehat{G_n^{F_n}}|\times |[G_n^{F_n}]|}= 1.
\end{equation}
\end{thm}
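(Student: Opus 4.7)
The plan is to combine an effective form of the lower bound in Theorem \ref{thm:fixedgroup} with an asymptotic estimate on the Weyl group side. Concretely, under hypotheses (S1)--(S3), the argument of Theorem \ref{thm:fixedgroup} should yield a uniform inequality of the shape
\[
\frac{|\{(\chi,O) \in \widehat{G_n^{F_n}} \times [G_n^{F_n}] : \chi(O) = 0\}|}{|\widehat{G_n^{F_n}}| \cdot |[G_n^{F_n}]|} \;\ge\; 1 - \sum_{i=1}^{\tau(G_n)} \frac{1}{|C_{W(G_n)}(w_i)|^2} - \varepsilon_n,
\]
with $\varepsilon_n \to 0$ as $n \to \infty$. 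The sparsity statement then reduces to showing that the Weyl group sum above also tends to $0$, which follows from the result of Blackburn--Neumann--Venkataraman \cite{blackburn2012probability} applied to $W(G_n)$: since $l_n \to \infty$ forces $|W(G_n)| \to \infty$, the ``conjugacy probability'' $\sum_i |C_{W(G_n)}(w_i)|^{-2}$ vanishes in the limit.

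The strategy for the effective bound follows the philosophy of Theorem \ref{thm:fixedgroup}. For each Weyl group conjugacy class $[w]$ one considers irreducible constituents $\chi$ of a Deligne--Lusztig induction $R_T^\theta$ from a maximal torus $T$ of type $[w]$, paired with regular semisimple classes $O$ whose unique containing torus lies in a class $[w']\neq [w]$; the Deligne--Lusztig character formula then forces $\chi(O)=0$. The key input is that $|T_w^{F_n}|$ is a polynomial in $q_n$ of degree $\mathrm{rank}(G_n)$ with leading coefficient $1$, so that under (S1) the ratios $|T_w^{F_n}|/q_n^{\mathrm{rank}(G_n)}$ are uniformly close to $1$ and the count of good pairs factors cleanly over $[w]$ and $[w']$. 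Condition (S2) guarantees that each $T_w^{F_n}$ actually contributes regular elements and admits characters $\theta$ in general position, while (S3) together with the very-good-characteristic hypothesis permits the use of Deligne--Lusztig theory and a Jordan-type decomposition to bound both the number of irreducible characters \emph{not} obtained in this way and the number of non-regular semisimple classes. These discrepancies are of size $O(l_n/q_n) + O(|l_n!|_{p_n}/q_n)$, and (S1) with (S3) forces them to vanish, yielding $\varepsilon_n \to 0$.

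The main obstacle will be making this counting genuinely uniform in $n$: the error $\varepsilon_n$ must be expressible purely in terms of $q_n$ and $l_n$, with no implicit constants depending on the Cartan type of $G_n$. In particular, one needs a Cartan-type-uniform polynomial bound, of degree strictly less than $\mathrm{rank}(G_n)$, on the number of non-regular-semisimple conjugacy classes of $G_n^{F_n}$, together with an analogous bound on the number of irreducible characters not arising from the Deligne--Lusztig series with $\theta$ in general position. This is precisely where the very-good-characteristic assumption and the bound $q_n > |l_n!|_{p_n}$ in (S3) become essential, via control of the ``exotic'' characters in a Jordan decomposition, and the rapid growth of $q_n$ imposed by (S1) is needed to absorb these corrections. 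Once the uniform bounds are in place, the Weyl group estimate from \cite{blackburn2012probability} closes the argument.
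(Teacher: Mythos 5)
Your overall plan (global lower bound of the shape ``$1-\sum_i c_i^{-2}-\varepsilon_n$'' plus vanishing of the Weyl-group conjugacy probability) is plausible in outline, but both halves have genuine gaps. The first is the claim that $\sum_{i}|C_{W(G_n)}(w_i)|^{-2}\to 0$ follows from \cite{blackburn2012probability} (by Blackburn, Britnell and Wildon, incidentally, not the authors you name) ``since $l_n\to\infty$ forces $|W(G_n)|\to\infty$.'' That reference treats symmetric groups, and the general principle you invoke is false: for finite groups, growing order does not force the conjugacy probability to vanish (dihedral groups of order $2m$ with $m$ odd have one class of $m$ reflections, so the probability stays above $1/4$). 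What makes it true for Weyl groups is their decomposition into irreducible factors together with a dichotomy: an irreducible classical factor of large rank $r$ has probability at most $6/r^2$ because its Weyl group surjects onto $S_r$ (Theorem \ref{thm:cited_symmetric-group-result} combined with \cite[Lemma 3.1]{blackburn2012probability}, as in Lemma \ref{lem:S_n-enough}), while a product of many bounded-rank factors has small probability because the probability is multiplicative over direct factors. A sequence of groups of type $A_1^{l_n}$ satisfies all hypotheses of the theorem and is not covered by the $S_r$ result at all, so this case analysis cannot be skipped; the paper runs it at the level of the group itself via Proposition \ref{prop:group-factorising} and the product formula $P(G_n)=\prod_i P(H_{i,n})$, splitting into ``one factor of large rank'' versus ``many factors of bounded rank'' (Lemma \ref{lem:smallrank-1-epsilon1}).

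The second gap is that your uniform error $\varepsilon_n\to 0$ is asserted, not produced: you state the discrepancies are $O(l_n/q_n)+O(|l_n!|_{p_n}/q_n)$ with no argument, and you yourself identify the required Cartan-type-uniform bounds (on non-regular-semisimple classes and on characters outside the general-position Deligne--Lusztig series) as the main obstacle without resolving it. The paper avoids proving such bounds for arbitrary reductive $G_n$ precisely by reducing to simple components, where the uniform inputs exist: the Guralnick--L\"ubeck bound \cite{guralnick2001p} on the proportion of regular semisimple elements and the Fulman--Guralnick bound $|[H^F]|\leq q^r+40q^{r-1}$ \cite{fulman2012bounds}, packaged in Lemma \ref{lem:lowerbound} and Corollary \ref{cor:simple-group-lower-bound}, with the elementary Lemmas \ref{lem:technical-struggle-for-estimates-fixed-r} and \ref{lem:technical-struggle-for-estimates} exploiting the hypothesis $q_n\geq l_n f(l_n)$ to make the bounds uniform in the rank. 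Without this reduction (or a substitute uniform class count for general reductive groups), your step ``the ratio of regular semisimple classes to all classes tends to $1$ uniformly'' is unsupported. Finally, your reading of (S3) is off: the condition $q_n>|l_n!|_{p_n}$ is used in Lemma \ref{lem:pdividethedimuni} to ensure $p\mid\rho(1)$ for every nontrivial unipotent character, which is what makes the identification of in-general-position characters with regular elements of the dual torus (Proposition \ref{prop:numberofingeneralpositionandregular}) work; it is not there to bound a set of ``exotic'' characters of size $O(|l_n!|_{p_n}/q_n)$.
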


\begin{rem} Note that for any Cartan type, there exists a reductive group which has connected centre and simply connected derived subgroup, see \cite[Theorem 1.21]{taylor}.  The assumptions on $G_n$ in Theorem \ref{thm:main} are imposed to count the number of regular semisimple elements and specific irreducible characters carefully. Furthermore, the condition (S1) is important to show equation \eqref{eq:mainthm1} via $\epsilon$-$\delta$ method.

\end{rem}

  \begin{rem}
  Note that if we consider the sequence 
$\{G_n\}_{n=1}^\infty$
  of reductive groups whose ranks tend to infinity (instead of their semisimple ranks), then we have some exceptional cases. For example, let us assume that almost all $G_n$ are the product of a large torus and a fixed reductive group. Then we would have that equation \eqref{eq:mainthm1} is not true since all cells of the character table of a torus are non-zero.  
 \end{rem}

\subsection{The Lie algebra version} 
Now, let us consider additive versions of the previous results.
When we consider a reductive Lie algebra $\mathfrak{g}=\mathrm{Lie}(G)$, we have an analogue of irreducible characters of $G^F$ via the Fourier transforms $\mathcal{F}$ on $G^F$-invariant functions on $\fg^F$, cf. Section \ref{ss:frobeniustransform} (for more detailed explanation, please see \cite{Letellier05}). 

Let us denote $[\mathfrak{g}^F]$ as the set of adjoint orbits in $\mathfrak{g}^F$ under $G^F$-action. For each $\mathcal{O}_i \in [\mathfrak{g}^F]$, we have the characteristic function on $\mathcal{O}_i$, denoted by $1_{\mathcal{O}_i}^{G^F}\in \mathbb{C}[\mathfrak{g}^F]^{G^F}$, where $\bC[\fg^F]^{G^F}$ is the space of $G^F$-invariant functions on $\fg^F$. If the Frobenius map is obvious, then we denote this characteristic function as $1_{\mathcal{O}_i}^{G}$.
Now, let us denote the set of $\mathcal{F}(1_{\mathcal{O}_i}^G)\in \mathbb{C}[\mathfrak{g}^F]^{G^F}$ by $\mathfrak{F}$, i.e. 
\[
\mathfrak{F} \colonequals \left\{ \mathcal{F}(1_{\mathcal{O}_i}^G)\,|\, i=1,2, \ldots, |[\mathfrak{g}^F]| \right\}.
\]
As noted before, the elements $ \mathcal{F}(1_{\mathcal{O}_i}^G)$ have similar properties with irreducible characters of $G^F$, for example, please see \cite{lehrer1996space, KNWG}.

\subsubsection{A fixed Lie algebra}  
We present an additive analogue of Theorem \ref{thm:fixedgroup}, in other words, we fix a reductive Lie algebra $\fg$ and increase the size of the finite field.

\begin{thm}\label{thm:fixedliealgebra} Let us consider the following conditions, which are Lie algebra analogue of Theorem \ref{thm:fixedgroup}. 

\begin{enumerate}
\item[(A1)] We have a group defined as a group scheme ${\mathbb{G}}$ over $\Z $ and we consider finite fields ${\mathbb{F}}_n,$ with $|{\mathbb{F}}_n|=q_n$ for $n=1,2,3,\ldots $ such that $q_n\rightarrow \infty $ as $n\rightarrow \infty$.
Here, $q_n$ is a power of a prime $p_n$ for all $n$. Then let us take $k_n=\overline{\mathbb{F}_{p_n}}$, and we consider the sequence of tensor products $\{\fg_n^{F_n}\}_{n=1}^\infty$, where $\fg_n=\mathrm{Lie}(\mathbb{G}\otimes_{\mathbb{Z}} k_n)$ with the Frobenius map $F_n=F_{q_n}$.  

\item[(A2)]   We have a connected reductive group $G$ defined over a field $k,$ where $k$ is algebraically closed, ${\mathrm{char}}\ k=p>0,$ $q=p^e$ for $e\in \Z_{>0}$, and $G^{F_{q^n}}=G^{({F_q}^n)}$ for $F_q^n\colonequals \underset{n\text{-times}}{\underbrace{F_q\circ \cdots \circ F_q}}$. Then we consider the sequence $\{\fg^{F_{q^n}}\}_{n=1}^\infty$ for $\fg=\mathrm{Lie}(G)$.
\end{enumerate}

We assume that all characteristic $p_n$ and $p$ are very good and {regular} (as defined in \cite[page 242]{lehrer1992rational}) for $G_n$ and $G$. 
Then we have that 
\[
    \underset{q \rightarrow \infty}{\mathrm{lim}}\frac{|\{(\mathcal{F}_n(1_{\mathcal{O}_i}^{G_n}),O) \in \mathfrak{F}_n\times [\fg_n^{F_n}]\,|\, \mathcal{F}(1_{\mathcal{O}_i}^{G_n})(O)=0\}|}{|\mathfrak{F}_n|\times |[\fg_n^{F_n}]|}
\geq 1-\sum_{i=1}^{\tau } \frac{1}{|C_{W }(w_{i})|^2}
  \]
    and
    \[
    \underset{n\rightarrow \infty}{\lim}\frac{|\{(\mathcal{F}(1_{\mathcal{O}_i}^{G^{F_q^n}}),O) \in \mathfrak{F}_{q^n}\times [\fg^{F_q^n}]\,|\, \mathcal{F}(1_{\mathcal{O}_i}^{G^{F_q^n}})(O)=0\}|}{|\mathfrak{F}_{q^n}|\times |[\fg^{F_q^n}]|}
\geq 1- \sum_{i=1}^{\tau} \frac{1}{|C_{W}(w_{i})|^2},
      \]
where $\mathcal{F}_n$ is the Fourier transform on $\fg_n^{F_n}$ or $\fg^{F_q^n}$ and $\mathfrak{F}_{q^n} \colonequals \left\{ \mathcal{F}(1_{\mathcal{O}_i}^{G^{F_q^n}})\,|\, i=1,2, \ldots , |[\fg^{F_q^n}]| \right\}$.
\end{thm}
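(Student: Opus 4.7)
The plan is to mirror the proof of Theorem \ref{thm:fixedgroup}, substituting the vanishing of Deligne--Lusztig characters with the corresponding Fourier-transform vanishing on the Lie algebra. The key input is the following analogue: if $\mathcal{O}\subset \fg^{F}$ is the $G^F$-orbit of a regular semisimple element $x$ with centralizer the $F$-stable maximal torus $T_x$, then $\mathcal{F}(1_{\mathcal{O}}^G)$ vanishes on every regular semisimple orbit $O$ whose centralizer torus is not $G^F$-conjugate to $T_x$. This is the Lie algebra shadow of Deligne--Lusztig vanishing and should follow from the Fourier-transform/Deligne--Lusztig induction dictionary developed in \cite{Letellier05}; the hypotheses that each $p_n$ is very good and regular ensure both that $\mathcal{F}$ is well behaved on $G^F$-invariant functions on $\fg^F$ and that the centralizer of a regular semisimple element of $\fg^F$ is an $\F_{q_n}$-rational maximal torus.

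Once this vanishing is in hand, I would enumerate regular semisimple orbits via the standard parametrization. In the untwisted setting, $G^F$-conjugacy classes of $F$-stable maximal tori correspond to conjugacy classes of $W$, with representatives $w_1,\ldots,w_\tau$. For the torus $T_i$ of type $w_i$ one has $|\ft_i^{F_n}|=q_n^{r}+O(q_n^{r-1})$, where $r=\mathrm{rank}\,G$, and excising the root hyperplanes leaves $q_n^{r}+O(q_n^{r-1})$ regular semisimple elements. Each $G^F$-orbit of regular semisimple elements of type $w_i$ meets $\ft_i^{F_n}$ in exactly $|C_W(w_i)|$ points, so the number of such orbits is
\[
N_i \;=\; \frac{q_n^{r}}{|C_W(w_i)|}+O(q_n^{r-1}).
\]
Summing over $i$ and using the identity $\sum_i 1/|C_W(w_i)|=1$, the total number of regular semisimple orbits is $N_{\mathrm{tot}}=q_n^{r}+O(q_n^{r-1})$. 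Under the very-good hypothesis, the number of nilpotent orbits in $\fg^F$ is bounded independently of $q_n$ and the non-regular semisimple orbits contribute only $O(q_n^{r-1})$, so $|[\fg_n^{F_n}]|=N_{\mathrm{tot}}+O(q_n^{r-1})$. Since $\mathcal{F}$ is a bijection on $G^F$-invariant functions, $|\mathfrak{F}_n|=|[\fg_n^{F_n}]|$.

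Combining these inputs, the number of vanishing pairs is bounded below by
\[
\sum_{i=1}^{\tau} N_i\bigl(N_{\mathrm{tot}}-N_i\bigr) \;=\; N_{\mathrm{tot}}^{2}-\sum_{i=1}^{\tau} N_i^{2},
\]
and dividing by $|\mathfrak{F}_n|\cdot|[\fg_n^{F_n}]|=N_{\mathrm{tot}}^{2}+O(q_n^{2r-1})$ and letting $q_n\to\infty$ yields
\[
\liminf_{n\to\infty}\frac{|\{(\mathcal{F}_n(1_{\mathcal{O}_i}^{G_n}),O)\,:\,\mathcal{F}_n(1_{\mathcal{O}_i}^{G_n})(O)=0\}|}{|\mathfrak{F}_n|\cdot|[\fg_n^{F_n}]|} \;\geq\; 1-\sum_{i=1}^{\tau}\frac{1}{|C_W(w_i)|^{2}}.
\]
Case (A2) is identical with $W=W(G)$ throughout, since the Weyl group does not depend on the Frobenius power.

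The main obstacle I anticipate is the first step: pinning down a clean citation or derivation of the Fourier-transform vanishing on non-conjugate tori. If it is not available off the shelf in the form needed, one would deduce it from the explicit formula for $\mathcal{F}(1_{\mathcal{O}}^G)$ at a regular semisimple element together with orthogonality between the Lie algebra analogues of Deligne--Lusztig inductions from non-conjugate maximal tori (as formalised in \cite{Letellier05}). Once that is in place, the combinatorial counting and the passage to the limit are entirely parallel to the group case treated in Theorem \ref{thm:fixedgroup}.
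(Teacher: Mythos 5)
Your proposal is correct and follows essentially the same route as the paper: the vanishing statement you flag as the main obstacle is exactly the paper's Lemma \ref{lem:vanishingvalueadditive}, obtained from the Kazhdan--Letellier formula $\mathcal{F}(1_{\mathcal{O}_X}^G)=q^{|\Phi^+|}R_{\ft}^{\fg}(f_X)$ (Theorem \ref{t:KL} and Corollary \ref{coro:fourier}), precisely the fallback derivation you describe. The remaining steps --- counting regular semisimple orbits by torus type via conjugacy classes of $W$ (Lemma \ref{lem:additivef_{i,G}}), bounding $|[\fg_n^{F_n}]|$ by the semisimple count $q_n^r$ plus a uniformly bounded nilpotent contribution (Lemmas \ref{lem:semisimple-contrib-Ffixed} and \ref{lem:upperbound-gnFnfixed}), and the bound $N_{\mathrm{tot}}^2-\sum_i N_i^2$ on vanishing pairs --- coincide with the paper's argument.
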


\subsubsection{}
Now, we present an additive analogue conjecture of Theorem \ref{thm:main} as follows: 
\begin{conj}\label{conj:additiveanalogue}
Let $\left\{ \fg_n\right\}_{n=1}^{\infty}$ be a sequence of reductive Lie algebras for $\mathfrak{g}_n=\mathrm{Lie}(G_n)$, defined over an algebraically closed field $k_n$ of positive characteristic with the Frobenius map $F_n= F_{q_n}$ (where $q_n$ is a power of a prime $p_n$) such that whose semisimple ranks tend to $\infty$. Let us consider the following set
\[
\mathfrak{F}_n \colonequals \left\{ \mathcal{F}(1_{\mathcal{O}_i}^{G_n})\,|\, i=1,2, \ldots, |[\mathfrak{g}_n^{F_n}]| \right\}.
\]
 Furthermore, let us assume that the characteristic of $p_n$ is very good and {regular} for $\fg_n$ with $q_n>|\Phi(\fg_n)|$ for almost all $n$, where $|\Phi(\fg_n)|$ is the root system of $G_n$.  Then we have
\[
    \underset{n \rightarrow \infty}{\mathrm{lim}}\frac{|\{(\mathcal{F}_n(1_{\mathcal{O}_i}^{G_n}),O) \in \mathfrak{F}_n\times [\fg_n^{F_n}]\,|\, \mathcal{F}_n(1_{\mathcal{O}_i}^{G_n})(O)=0\}|}{|\mathfrak{F}_n|\times |[\fg_n^{F_n}]|}= 1.
\]
\end{conj}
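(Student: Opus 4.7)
The plan is to mirror the proof of Theorem \ref{thm:main}, with the Fourier transforms $\mathcal{F}(1_{\mathcal{O}}^{G_n})$ taking the role that Deligne-Lusztig characters played in the multiplicative setting. Concretely, for each $F_n$-stable maximal torus $T\subseteq G_n$ with Cartan subalgebra $\mathfrak{t}=\mathrm{Lie}(T)$ and each regular semisimple orbit $\mathcal{O}\subseteq\fg_n^{F_n}$ meeting $\mathfrak{t}^{F_n}$, the class function $\mathcal{F}(1_{\mathcal{O}}^{G_n})$ vanishes on every regular semisimple orbit whose associated $G_n^{F_n}$-conjugacy class of maximal tori differs from that of $T$; this is the additive counterpart of the vanishing of $R_T^{G_n}(\theta)$ on regular semisimple classes outside the tori conjugate to $T$, and is the same mechanism already exploited in Theorem \ref{thm:fixedliealgebra} (cf.\ \cite{Letellier05, KNWG}). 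Each such pair $(\mathcal{F}(1_{\mathcal{O}}^{G_n}), O)$ contributes a zero entry, and the task is to count them.

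The count then proceeds in two stages. First, one reproduces the lower bound of Theorem \ref{thm:fixedliealgebra}: summing contributions of pairs of the above form over $W_n$-conjugacy classes of $F_n$-stable maximal tori yields $1-\sum_{i=1}^{\tau_n} 1/|C_{W_n}(w_{i,n})|^2$, up to error terms arising from non-regular or non-semisimple orbits and from regular semisimple orbits whose associated Cartans happen to lie in the same $W_n$-class. These error terms are polynomial in $q_n$ with leading power dictated by $r_n = \mathrm{rank}\,\fg_n$, so under the hypothesis $q_n > |\Phi(\fg_n)|$ each of the $O(q_n^{-1})$ corrections is uniformly small in $n$. Second, by the Blackburn-Neumann-Venkataraman bound from \cite{blackburn2012probability} (already invoked for Theorem \ref{thm:main}), $\sum_{i=1}^{\tau_n} 1/|C_{W_n}(w_{i,n})|^2 \to 0$ as $|W_n|\to \infty$, and an $\epsilon$-$\delta$ argument analogous to the one in Theorem \ref{thm:main}, with $|\Phi(\fg_n)|$ playing the role that $l_n\cdot f(l_n)$ and $|l_n!|_{p_n}$ play in hypotheses (S1) and (S3), upgrades the lower bound to the limit $1$.

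The main obstacle, as already flagged in Section \ref{sss:conjectureexplanation}, is precisely the uniform count of orbits: one needs an estimate on $|[\fg_n^{F_n}]|$ of the form $q_n^{r_n}\,P_n(q_n^{-1})$ with the polynomials $P_n$ bounded independently of $n$, together with a matching lower bound on the number of regular semisimple orbits in each $W_n$-class of Cartans. In the multiplicative setting the corresponding count for $|[G_n^{F_n}]|$ is controlled by classical results on conjugacy classes in finite reductive groups; on the Lie algebra side one must establish a parallel uniform control of nilpotent orbits and of the fibres of the Jordan decomposition over $\fg_n^{F_n}$ in very good and regular characteristic. Securing such uniformity simultaneously in the rank and in $q_n$ is the essential technical hurdle, and is what currently separates the conjecture from a theorem.
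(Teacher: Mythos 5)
This statement is a conjecture in the paper, not a theorem: the authors do not prove it, and in Section \ref{sss:conjectureexplanation} they explicitly isolate the missing ingredient as Question \ref{que:missingstep-conj6}, namely a uniform (rank-independent) upper bound on the number of adjoint orbits $|[\fg_n^{F_n}]|$ playing the role that the bound $|[H^F]|\leq q^r+40q^{r-1}$ of \cite{fulman2012bounds} plays in Corollary \ref{cor:simple-group-lower-bound}. Your proposal follows exactly the route the paper anticipates: use the vanishing of $\mathcal{F}(1_{\mathcal{O}}^{G_n})$ on regular semisimple orbits attached to non-conjugate Cartans (Lemma \ref{lem:vanishingvalueadditive}), reduce to the probability $\sum_i 1/|C_{W_n}(w_{i,n})|^2$ that two Weyl group elements are conjugate, and invoke \cite{blackburn2012probability} (note the authors are Blackburn, Britnell and Wildon, not Blackburn--Neumann--Venkataraman) together with an $\varepsilon$-$\delta$ argument and a factorisation into simple components as in Theorem \ref{thm:main}. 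You also correctly identify, and honestly flag, the same obstruction the paper does: uniform control of $|[\fg_n^{F_n}]|$, i.e.\ of nilpotent orbit counts and Jordan-decomposition fibres simultaneously in the rank and in $q_n$ (the paper's Lemma \ref{lem:upperbound-gnFnfixed} only gives such a bound with a constant depending on the fixed $\fg$).

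So the verdict is that your proposal is not a proof, and cannot be one as written: the step where the ``error terms'' are declared uniformly small in $n$ presupposes precisely the uniform orbit-count estimate that neither you nor the paper establishes, and without it the denominator $|[\fg_n^{F_n}]|^2$ and the non-regular/non-semisimple corrections are not controlled as the semisimple rank grows. This is a genuine gap, but it is the gap that makes the statement a conjecture rather than a theorem in the paper; your outline of how the remaining pieces (the additive vanishing statement, the count $g_{n,i}/c_i$ of regular orbits per Cartan class, the Weyl-group probability estimate, and the reduction to simple factors) would be assembled is consistent with the authors' own intended strategy.
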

We anticipate that the proof of this conjecture would be similar to the proof of Theorem \ref{thm:main}. The one missing milestone to prove this conjecture is given in Section \ref{sss:conjectureexplanation} (in the form of Question \ref{que:missingstep-conj6}), and we believe that solving this conjecture would be an interesting project. In Section \ref{sssect:porc} we also mention a question about counting elements of a given type in the additive context. This is a would-be analogue of results from \cite{deriziotis, BK23}.

\section{Preliminaries}\label{sect:prelim}

In this section, we introduce basic materials and well-known facts which will be mainly used to prove our main results.

\subsection{Assumptions and notation}\label{subs:assumptions+notation}In this subsection, we introduce the assumptions and notation that will be used throughout this paper. These were briefly mentioned in the Introduction.

In this paper, we mean that a connected reductive group $G$ is untwisted, split, its centre $Z(G)$ is connected, and the derived subgroup $[G,G]$  simply connected. Assume $G$ is defined over an algebraically closed field $k$ of positive characteristic $p$, and the characteristic is very good for $G$ in the sense of \cite[\S2.1]{bate}. Let $F:k\rightarrow k$ be a Frobenius automorphism with $|k^F|=q.$  

Let $T$ be a maximal $F$-stable split torus of $G,$ and $r$ the rank of $G,$ i.e. $T\simeq (k^{\ast})^r$ and $T^F\simeq ({\mathbb{F}_q^{\ast}})^r.$
Let us assume that every maximal torus of $G^F$ is non-degenerate, cf. \cite[Proposition 3.6.6]{carter1985finite}.
 Let $W$ be the Weyl group of $G$ with respect to $T$, i.e. $W\simeq N_G(T)/C_G(T)=N_G(T)/T.$ 
We assume $G$ is untwisted, i.e. the $F$-action on the Weyl group $W_G=N_G(T)/T$ is trivial, cf. \cite[\S1.6]{geck2020character}. In particular, $F$-conjugacy classes of $W$ are just conjugacy classes of $W;$
let $\tau(W)$ denote their number. If the Weyl group is clear from the context, we denote $\tau(W)$ by $\tau$.

Let us denote the root datum of a connected reductive group $G$ as $(X,\Phi,Y,\Phi^\vee)$, and $\check{T}=\mathrm{Spec}\ k[X]$ the dual torus of $T$. Then we have the Langlands dual group of $G$ over $k$, denoted by $\chG$. In other words, $\chG$ is the connected reductive group over $k$ with a split maximal torus $\check{T}$ and root datum $(Y,\Phi^\vee,X,\Phi)$.


\subsection{$F$-stable tori}\label{subs:F-stable-tori}
The $F$-stable maximal tori of $G$ are all conjugate by elements of $G,$ but not necessarily by elements of $G^F.$ 
Here we summarise facts about $G^F$-conjugacy classes of $F$-stable tori of $G.$ The reader may wish to consult \cite[\S 3.3]{carter1985finite} for further details on this section. 

\begin{prop}\label{prop:FstableTorifacts}
	Let $G,$ $T,$ $W$ and $F$ be as in Section \ref{subs:assumptions+notation}. Then $G^F$-conjugacy classes of $F$-stable maximal tori of $G$ are in bijection with conjugacy classes of $W.$ If $w\in W$ and $T'=T[w]$ is an $F$-stable maximal torus of $G$ corresponding to the class of $W,$ then the quotient group $N_G(T[w])^F/T[w]^F$ is isomorphic to $C_W(w)=\{x\in W\,|\, x^{-1}wx=w \}$. The order $ |T[w]^F|$ is a monic polynomial of $q$ of degree $r.$
\end{prop}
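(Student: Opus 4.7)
The plan is to use the standard Lang--Steinberg parametrisation of $F$-stable maximal tori. Given an $F$-stable maximal torus $T'$ of $G$, the Lang--Steinberg theorem produces $g \in G$ with $T' = gTg^{-1}$, and $F$-stability forces $w_0 \colonequals g^{-1}F(g) \in N_G(T)$; I would let $w \in W$ denote its image. A direct computation shows that replacing $g$ by $gn$ with $n\in N_G(T)$ modifies $w$ by the $F$-twisted conjugation $w \mapsto \bar n^{-1} w F(\bar n)$, and that two $G^F$-conjugate $F$-stable tori give rise to $F$-twisted conjugate Weyl group elements. Conversely, any element $w\in W$ can be realised from such a $g$ by a second appeal to Lang--Steinberg. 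Since $G$ is untwisted, $F$ acts trivially on $W$, so $F$-twisted conjugacy reduces to ordinary conjugacy, establishing the claimed bijection.

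For the isomorphism $N_G(T[w])^F/T[w]^F \simeq C_W(w)$, I would again write $T[w] = gTg^{-1}$, so that $N_G(T[w]) = g N_G(T) g^{-1}$. The element $gng^{-1}$ is $F$-fixed precisely when $F(n) = w_0^{-1} n w_0$. Reducing modulo $T$ and invoking triviality of $F$ on $W$, this becomes $\bar n = w^{-1}\bar n w$, i.e.\ $\bar n \in C_W(w)$. The same relation restricted to $n \in T$ yields the description of $T[w]^F$, which visibly lies in the kernel of $n \mapsto \bar n$. I would then conclude that $n \mapsto \bar n$ induces a surjection $N_G(T[w])^F \twoheadrightarrow C_W(w)$ with kernel $T[w]^F$; surjectivity requires one more Lang--Steinberg lift, producing for each centralising class a representative inside $N_G(T[w])^F$.

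For the order statement, I would work with the cocharacter lattice $Y = X_*(T)$. Under the natural identification $T \simeq Y \otimes_{\Z} k^\times$, the $q$-power Frobenius acts as multiplication by $q$ on $Y$ (since $T$ is split), so on $T[w]$ the effective Frobenius acts on $Y$ as $qw$. The standard determinant argument (cf.\ \cite[\S3.3]{carter1985finite}) then gives $|T[w]^F| = |\det_{Y}(qw - 1)|$. As a function of $q$ this is a polynomial of degree $r$ with leading term $q^r\det(w) = \pm q^r$; taking absolute values and restricting to sufficiently large $q$ produces a monic polynomial in $q$ of degree $r$.

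The content is entirely standard, and the principal challenge is bookkeeping: keeping left/right and $w$ versus $w^{-1}$ conventions consistent, carefully using the untwisted hypothesis at the right moment to collapse $F$-twisted conjugacy to ordinary conjugacy, and making the second Lang--Steinberg step in the quotient isomorphism explicit so that surjectivity is not left as a gap.
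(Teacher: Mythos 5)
Your proof is correct and takes essentially the same route as the paper: the paper simply cites Carter (Propositions 3.3.2, 3.3.3, 3.3.5, 3.3.6) for exactly the Lang--Steinberg twisting argument and the characteristic-polynomial computation that you write out in full. One cosmetic tightening for the order statement: instead of "taking absolute values and restricting to sufficiently large $q$", observe that $|\det_{Y}(qw-1)|$ equals the monic characteristic polynomial of $w$ on $Y\otimes\mathbb{R}$ evaluated at $q$ for every prime power $q$, since the eigenvalues of $w$ are roots of unity and hence that polynomial is positive for all $q>1$.
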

\begin{proof}
The correspondence between $G^F$-conjugacy classes of $F$-stable maximal tori of $G$ and conjugacy classes of $W$ is \cite[Proposition 3.3.3]{carter1985finite}. 	
	To illustrate the bijection, recall that for $g\in G$ the maximal torus $gTg^{-1}$ is $F$-stable if and only if $g^{-1}F(g)\in N_G(T)$ \cite[Proposition 3.3.2]{carter1985finite}, the map $N_G(T)\rightarrow W$ yields an element $w.$ We then say that $T[w]:=gTg^{-1}$  is obtained from $T$ by twisting with $w.$ The $W$-conjugacy class of $w$ determines $T[w]$ up to conjugation by $G^F$. Observe that $C_G(T)=T$ \cite[p.28]{carter1985finite} and the above implies that $T[w]$ is well-defined from $T.$ 

    The second statement is  \cite[Proposition 3.3.6]{carter1985finite}.
	
	It follows from \cite[Proposition 3.3.5]{carter1985finite} that $|T[w]^F|=\chi(q)$ where $\chi(x)$ is the characteristic polynomial of $w$ acting on the real vector space spanned by the cocharacter lattice of $T=T_1,$ which has dimension $r.$
\end{proof}

We fix the following notation for the remainder of the paper. We pick representatives $w_1=e_W,\ \ldots ,\ w_{\tau}$ of conjugacy classes in $W$, and set $T_i:=T[w_i]$ to be the tori obtained from $T$ by twisting with $w_i$ for $1\leq i\leq \tau.$ (Note that $T=T_1.$) Furthermore, we set $c_i:=|C_W(w_i)|$ for the sizes of the centraliser subgroups in the Weyl group. Since the conjugacy classes of size $\displaystyle \frac{|W|}{c_i}$ partition $W,$ we have 
\begin{equation}\label{eq:sumofreciprocalci}
	\sum_{i=1}^{\tau} \frac{1}{c_i}=1.
	\end{equation}

\subsection{Regular semisimple elements}

As briefly introduced in Section \ref{ss:overview}, we are interested in regular semisimple elements of $G^F.$ In particular, we wish to count the number of classes of these in a given $F$-stable torus of $G^F.$ Recall the definition of a regular semisimple element.


\begin{defe}\label{def:regularsemisimple}
	The semisimple element $g\in G$ is {\emph{regular}} if $\dim C_G(g)=r.$ 
\end{defe}
This means that a semisimple element is regular if its centraliser has the least possible dimension \cite[\S 2.2, 2.3]{humphreys1995conjugacy}. 
We collect some useful information in the following. 
\begin{prop}\label{prop:properties-counts-of-regular-semisimple}
	Let $G$ be as in Section \ref{subs:assumptions+notation}, and let $w_i$, $c_i$, and $T_i$ (for $i=1,2, \ldots , \tau$) be as in Section \ref{subs:F-stable-tori}. 
	Then each $T_i^F$ is also a representative of a conjugacy class of maximal tori in $G^F$.
 	  The number $f_i$ of regular elements in $T_i^F$ is a monic polynomial in $q$ of degree $r.$ The semisimple conjugacy classes of $G^F$ are in bijection with $F$-stable orbits in $T/W.$ 
	 The number of $G^F$-conjugacy classes of regular semisimple elements in $T_i^F$ is $\displaystyle \frac{f_i}{c_i}.$ 
\end{prop}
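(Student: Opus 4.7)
The plan is to address the four assertions in turn, each reducing fairly quickly to Proposition \ref{prop:FstableTorifacts} together with standard facts about centralisers of semisimple elements. The first claim will follow immediately from that proposition, since $T_i=T[w_i]$ is constructed to correspond to the conjugacy class of $w_i$ under the stated bijection. For the third claim I would invoke the classical parametrisation of semisimple conjugacy classes by $W$-orbits on $T$; the hypotheses that $Z(G)$ is connected and $[G,G]$ is simply connected force every semisimple centraliser to be connected, and the bijection then descends cleanly to $F$-fixed data, yielding $F$-stable orbits in $T/W$.

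For the second claim I would use the standard characterisation that $t\in T_i$ is regular semisimple precisely when $\alpha(t)\neq 1$ for every root $\alpha\in\Phi$ (equivalently, $C_G(t)^{\circ}=T_i$, so $\dim C_G(t)=r$). The non-regular locus of $T_i^F$ then lies in the union over $\alpha\in\Phi$ of the $F$-fixed points of the kernels $\ker(\alpha)\subset T_i$, each of which is an $F$-stable diagonalisable subgroup of dimension at most $r-1$. An inclusion-exclusion across subsets of $\Phi$ would express the number of non-regular elements of $T_i^F$ as an alternating sum of $|(\bigcap_{\alpha\in S}\ker(\alpha))^F|$ over subsets $S\subseteq\Phi$; by the same reasoning as for $|T_i^F|$ in Proposition \ref{prop:FstableTorifacts}, each such term is polynomial in $q$ of degree at most $r-1$. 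Subtracting from the monic degree-$r$ polynomial $|T_i^F|$ then yields $f_i$ as a monic polynomial in $q$ of degree $r$.

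For the final count of $G^F$-conjugacy classes I would exploit that a regular semisimple $t\in T_i$ has $C_G(t)=T_i$, so $t$ lies in a unique maximal torus; consequently any $G^F$-conjugate of $t$ lying again in $T_i^F$ must arise from an element of $N_G(T_i)^F$. Two elements of $T_i^F$ are therefore $G^F$-conjugate if and only if they lie in the same orbit of $N_G(T_i)^F/T_i^F$, which has order $c_i$ by Proposition \ref{prop:FstableTorifacts}. Since the stabiliser of a regular semisimple element in this quotient is trivial, each $G^F$-class meets $T_i^F$ in exactly $c_i$ regular elements, yielding $f_i/c_i$ classes in total. The most delicate step in this plan will be the inclusion-exclusion in (2): one must verify that after twisting by $w_i$, intersections of root kernels remain $F$-stable diagonalisable groups whose $\mathbb{F}_q$-point counts are genuinely polynomial in $q$ of the expected lower degree, which requires tracking how Frobenius acts on the $w_i$-twisted character lattice rather than reading off the degrees purely from dimensions.
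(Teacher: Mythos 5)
Your handling of the first, third and fourth assertions is essentially the paper's own argument. For the last count in particular the paper argues exactly as you do: a $G^F$-element conjugating one regular element of $T_i^F$ to another must normalise $T_i$ (a regular semisimple element lies in a unique maximal torus), the centraliser $C_G(t)$ is connected under the standing hypotheses and hence equals $T_i$, so each $G^F$-class meets $T_i^F$ in a free orbit of $N_G(T_i)^F/T_i^F\cong C_W(w_i)$, of size exactly $c_i$ by Proposition \ref{prop:FstableTorifacts}. For the first and third assertions the paper simply cites Carter (Propositions 3.6.2 and 3.7.3, the latter also giving the connectedness you use); your sketch agrees with this, modulo noting that passing to $F$-fixed points identifies $G^F$-classes of $F$-stable maximal tori of $G$ with conjugacy classes of maximal tori of the finite group $G^F$, which is the content of the 3.6.2 citation.

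The genuine issue is in your second step. The paper disposes of the polynomiality of $f_i$ by citation (Geck--Malle, Lemma 2.3.11, or alternatively M\"obius inversion over the \emph{closed subsystems} of $\Phi$, as in Humphreys and Lehrer); your inclusion--exclusion over arbitrary subsets $S\subseteq\Phi$ is an embryonic form of that alternative, but the delicate point you flag at the end is a real obstruction, not just a bookkeeping matter. For $w_i\neq e$ the kernels $\ker(\alpha)\subseteq T_i$ and their intersections are not individually $F$-stable, so the terms $\lvert(\bigcap_{\alpha\in S}\ker(\alpha))^F\rvert$ are not meaningful as written; one must first observe that for $t\in T_i^F$ the condition $\alpha(t)=1$ forces $\beta(t)=1$ for every $\beta$ in the $\langle w_i\rangle$-orbit of $\alpha$ (since $\alpha(t)=\alpha(F(t))$ and $x^q=1$ forces $x=1$ in characteristic $p$), and then run the count over $w_i$-stable collections, or better over $F$-stable closed subsystems. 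More seriously, these intersections are diagonalisable but in general disconnected, and the number of $F$-fixed points coming from the component group depends on divisibility conditions on $q$; without further input your argument yields a count that is only polynomial on residue classes of $q$, together with the correct leading term and the degree bound $\leq r-1$ for the non-regular locus, but not the literal claim that $f_i$ is a single monic polynomial in $q$ of degree $r$. This is exactly where the standing hypotheses (connected centre, simply connected derived subgroup, very good characteristic) and Lehrer's results on rational tori enter; to complete your plan you should either restrict the inclusion--exclusion to closed subsystems and quote the M\"obius-inversion formula there, or fall back on the citations the paper uses.
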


\begin{proof}
The first statement follows from \cite[Proposition 3.6.2]{carter1985finite}.
The second statement can be proved by \cite[Lemma 2.3.11]{geck2020character} by considering $f_i \in \mathbb{C}[q]$.
Another way to prove this fact is that applying the M\"obius inversion on the closed subsystems of $\Phi$, as noted in \cite[\S 8.9]{humphreys1995conjugacy}, see also \cite{lehrer1992rational}.

The statement about the bijection between $G^F$-classes of semisimple elements and $F$-stable orbits in $T/W$ is \cite[Proposition 3.7.3]{carter1985finite}. It is clear from the proof of \cite[Proposition 3.7.1]{carter1985finite} that two elements of $T$ are conjugate by an element of $G^F$ if and only if they are $W$-conjugate.

Let us consider the last statement by taking two regular elements $t,t'\in T_i^F$ that are conjugate via an element $g\in G^F,$ so that $t'=gtg^{-1}.$ Since $gT_ig^{-1}$ is a maximal torus containing $t'$ (from the fact that a regular semismiple element is contained in a unique maximal torus), we have $gT_ig^{-1}=T_i$, and so $g\in N_G(T_i)^F.$  Furthermore, by the proof of \cite[Proposition 3.7.3]{carter1985finite} we have that $C_G(t)$ is connected, and by \cite[Theorem 3.5.3]{carter1985finite} we have $C_G(t)^{\circ}=T_i.$ Since $C_G(t)=C_G(t)^{\circ}=T_i,$ it follows from Proposition \ref{prop:FstableTorifacts} that the $G^F$-class of $t$ in $T_i$ has size $|N_G(T_i)^F/C_G(t)^F|=|N_G(T_i)^F/T_i^F|=|C_W(w_i)|=c_i$. This gives the last statement. 
\end{proof}

 \subsection{Deligne-Lusztig characters}\label{subsect:Deligne-Lusztig}


When we consider irreducible characters of $G^F,$ the most important notion is a \emph{Deligne-Lusztig character} $R_T^G(\theta)$, where $T$ is a $F$-stable maximal torus of $G$ and $\theta\in \widehat{T^F}$. This is introduced by Deligne and Lusztig, and for definition, please see \cite{DL,geck2020character}. To estimate the number of zeroes in the character table of $G^F,$ we consider values of certain Deligne-Lusztig characters. 
In general, a Deligne-Lusztig character is a virtual character, however, there is a useful result about this character.
\begin{lem}\cite[Corollary 2.2.9]{geck2020character}A character $\theta\in \widehat{T[w]^F}$ is said to be \emph{in general position} when $w.\theta\neq \theta$ for any non-identity element $w$ in $ N_G(T[w])^F/T[w]^F$.
\begin{enumerate}
 \item If $\theta $ is in general position, the Deligne-Lusztig character $\epsilon_G\epsilon_T R_{T}^G(\theta)$ is an irreducible character of $G^F$. 
   \item If $\theta$ is not in general position, then the number of irreducible components of $R_T^G(\theta)$ is at most $|W|$.
 \end{enumerate}
 Here, $\epsilon_G:=(-1)^{r_G}$ (resp. $\epsilon_T:=(-1)^{r_T}$), where ${r_G}$ (resp. ${r_T}$) is the relative $F$-rank of a split maximal torus of $G$ (resp. the relative $F$-rank of $T$). 
 \end{lem}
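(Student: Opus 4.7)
The plan is to reduce both claims to the inner product formula for Deligne--Lusztig characters (\cite[Theorem 11.15]{DL} or \cite[Theorem 2.2.8]{geck2020character}), which states that for $\theta,\theta'\in\widehat{T[w]^F}$,
\[
\langle R_{T[w]}^G(\theta),R_{T[w]}^G(\theta')\rangle_{G^F}=|\{n\in N_G(T[w])^F/T[w]^F \,:\, n.\theta=\theta'\}|.
\]
In particular, taking $\theta=\theta'$, this inner product equals the order of the stabiliser of $\theta$ in $N_G(T[w])^F/T[w]^F$, a subgroup of $C_W(w)$ by Proposition \ref{prop:FstableTorifacts}. This is the single input that controls both parts.

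For part (1), when $\theta$ is in general position the stabiliser in $N_G(T[w])^F/T[w]^F$ is trivial, so $\langle R_{T[w]}^G(\theta),R_{T[w]}^G(\theta)\rangle_{G^F}=1$. Any virtual character of norm one over $\mathbb{Z}$ is of the form $\pm \chi$ for some irreducible character $\chi$ of $G^F$. To pin down the sign as $\epsilon_G\epsilon_T$, I would invoke the dimension formula $R_{T[w]}^G(\theta)(1)=\epsilon_G\epsilon_{T[w]}\cdot[G^F:T[w]^F]_{p'}$ from Deligne--Lusztig (see \cite[Proposition 2.2.7]{geck2020character}), which has sign $\epsilon_G\epsilon_T$ since the positive integer $[G^F:T[w]^F]_{p'}>0$. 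Hence $\epsilon_G\epsilon_T R_{T[w]}^G(\theta)$ evaluates positively at the identity, so the sign must be $+1$, yielding an honest irreducible character.

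For part (2), write the virtual character as $R_{T[w]}^G(\theta)=\sum_j n_j\chi_j$ in the basis of irreducible characters of $G^F$ with $n_j\in\mathbb{Z}$. Then
\[
\#\{j:n_j\neq 0\}\leq \sum_j n_j^2=\langle R_{T[w]}^G(\theta),R_{T[w]}^G(\theta)\rangle_{G^F}.
\]
By the inner product formula, the right-hand side equals $|\mathrm{Stab}_{N_G(T[w])^F/T[w]^F}(\theta)|$, which is bounded by $|N_G(T[w])^F/T[w]^F|=|C_W(w)|\leq |W|$ (using Proposition \ref{prop:FstableTorifacts} again). This gives the desired bound on the number of distinct irreducible constituents.

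The main obstacle, as always in this circle of results, is the Deligne--Lusztig inner product formula itself, whose proof rests on the Lefschetz trace formula on $\ell$-adic cohomology of Deligne--Lusztig varieties; I would simply cite this and focus the exposition on part (1)'s sign determination, which is the only subtle book-keeping step. Everything else is formal character theory.
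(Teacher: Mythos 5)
Your proof is correct: the paper itself gives no argument for this lemma (it is quoted directly from \cite[Corollary 2.2.9]{geck2020character}), and your reconstruction via the Deligne--Lusztig inner product formula, the norm-one criterion for $\pm(\text{irreducible})$, and the degree formula $R_{T}^G(\theta)(1)=\epsilon_G\epsilon_T[G^F:T^F]_{p'}$ to fix the sign is exactly the standard argument underlying the cited result. The only cosmetic remark is that your bound in part (2) is the sharper $|C_W(w)|$, which of course implies the stated bound $|W|$.
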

 {For the definition of relative $F$-rank, please see \cite[Definition 2.2.11]{geck2020character}.}
Now, let us compare two irreducible Deligne-Lusztig characters.
\begin{lem}\label{lem:orthgonlaity}Let $T$ and $T'$ be two $F$-stable maximal tori of $G$, and $\theta\in\widehat{T^F}$ and $\theta' \in\widehat{T'^F}$ are in general position.
Let us assume that $T$ and $T'$ are not $G^F$-conjugate or $T=T'$ but $\theta$ and $\theta'$ are not $G^F$-conjugate. Then $R_T^G(\theta)$ and $R_{T'}^G(\theta')$ are non-isomorphic irreducible characters.
\end{lem}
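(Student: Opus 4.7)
The plan is to reduce the statement to the orthogonality relations for Deligne-Lusztig characters. Because both $\theta$ and $\theta'$ are in general position, the preceding lemma already guarantees that $\epsilon_G\epsilon_T R_T^G(\theta)$ and $\epsilon_G\epsilon_{T'} R_{T'}^G(\theta')$ are irreducible characters of $G^F$. Consequently it suffices to show that they are non-isomorphic, which, by orthonormality of the irreducible characters in the inner product on class functions, is equivalent to
\[
\langle R_T^G(\theta), R_{T'}^G(\theta')\rangle_{G^F} = 0.
\]

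First I would invoke the Deligne-Lusztig orthogonality formula, which expresses this inner product as a count (with an appropriate normalisation) of elements $g \in G^F$ satisfying $g^{-1}Tg = T'$ and ${}^g\theta' = \theta$; see \cite{DL} or \cite[\S2.2]{geck2020character}. The structure of the argument is then a straightforward case split according to the two hypotheses in the statement.

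In the first case, where $T$ and $T'$ are not $G^F$-conjugate, no $g \in G^F$ even satisfies the condition $g^{-1}Tg = T'$, so the transporter set indexing the formula is empty and the inner product vanishes. In the second case, where $T = T'$ but $\theta, \theta'$ are not $G^F$-conjugate, the transporter is contained in $N_G(T)^F$, and since the action of $G^F$ on characters of $T^F$ factors through $N_G(T)^F/T^F$, the hypothesis says precisely that no such element carries $\theta'$ to $\theta$; hence the indexing set is again empty. In both cases the inner product is $0$, and the lemma follows. The only delicate point is to match conventions in the orthogonality formula, but this is standard and requires no genuine new input beyond the cited references.
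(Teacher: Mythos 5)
Your proposal is correct and follows essentially the same route as the paper: both invoke the Deligne--Lusztig inner product formula $\langle R_T^G(\theta),R_{T'}^G(\theta')\rangle=|\{ w\in N(T,T')^F/T'^F \mid w\cdot\theta'=\theta\}|$, observe that the indexing set is empty in each of the two cases, and conclude non-isomorphism from irreducibility (up to the sign $\epsilon_G\epsilon_T$). No gaps; the case split and the use of irreducibility match the paper's argument.
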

\begin{proof}
From \cite[Theorem 6.8]{DL}, we have $$\langle R_T^G(\theta),R_{T'}^G(\theta')\rangle=|\{ w\in N(T,T')^F/T'^F\,|\, w\cdot\theta'=\theta\}|,$$ where $N(T,T')=\{ g\in G\,|\, Tg=gT'\}$. If $T$ and $T'$ are not $G^F$-conjugate, then $N(T,T')^F$ is empty. If $T=T'$ but $\theta$ and $\theta'$ are not $G^F$-conjugate, then $\{ w\in N(T,T')^F/T'^F\,|\, w\cdot\theta'=\theta\}$ is empty. So for both cases, we have $\langle R_T^G(\theta),R_{T'}^G(\theta')\rangle=0$, and this implies that $R_T^G(\theta)$ and $R_{T'}^G(\theta')$ are not isomorphic from the fact that both are irreducible (up to sign).
\end{proof}

These lemmas would play an important role in this paper. 

\subsubsection{Vanishing values of certain Deligne-Lusztig characters} Let us briefly explain why we consider Deligne-Lusztig characters. In general, for any irreducible character $\chi$ of $\widehat{G^F}$, it is hard to compute $\chi(g)$ for any $g\in G^F$. However, there is a well-known formula for the value of a Delign-Lusztig character, for example, \cite[Theorem 2.2.16]{geck2020character}. With this formula, we have the following useful result.

\begin{prop}\label{prop:vanishingvalue}
Let $T$ and $T'$ be a $F$-stable maximal tori of $G$ that are not $G^F$-conjugate, and consider $\theta\ (\in \widehat{T^F})$ is in general position, and $t\ (\in T'^F)$ is regular. Then $R_{T}^G(\theta)(t)=0$.
\end{prop}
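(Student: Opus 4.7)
The plan is to invoke the Deligne--Lusztig character formula (see \cite[Theorem 2.2.16]{geck2020character} as referenced in the excerpt) and to exploit the uniqueness of the maximal torus containing a regular semisimple element. The character formula evaluates $R_T^G(g)(\theta)$ at an element $g$ with Jordan decomposition $g=su$ as a sum, up to a normalising factor, indexed by $G^F$-conjugates $xTx^{-1}$ of $T$ whose $F$-fixed part contains $s$, weighted by Green functions of $C_G(s)^\circ$ evaluated at $u$ together with values of $\theta$ conjugated appropriately. My proof will specialise this to $g=t$ and show the index set of the sum is empty.

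First, I will record the Jordan decomposition: since $t$ is semisimple one has $s=t$ and $u=1$, so the Green function contribution is $Q^{C_G(t)^\circ}_{x^{-1}Tx}(1)$ (which is irrelevant to the argument since we will show there are no index elements at all). Next, by Proposition \ref{prop:properties-counts-of-regular-semisimple} (and its proof, appealing to \cite[Theorem 3.5.3]{carter1985finite}) the connected centraliser of the regular element $t$ is precisely the unique maximal torus containing it, namely $T'$. In particular, $t$ lies in no other maximal torus of $G$.

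Now I examine the indexing set in the character formula, which consists of those $x\in G^F$ such that $x^{-1}tx\in T$, equivalently $t\in xTx^{-1}$. For such an $x$ the conjugate $xTx^{-1}$ is an $F$-stable maximal torus containing $t$, and by the uniqueness from the previous paragraph one must have $xTx^{-1}=T'$. This would exhibit $T$ and $T'$ as $G^F$-conjugate, contradicting the hypothesis. Hence no such $x$ exists, the sum in the character formula is empty, and $R_T^G(\theta)(t)=0$.

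I do not expect a genuine obstacle here: the argument is essentially a direct application of the character formula combined with the rigidity property of regular semisimple elements (uniqueness of the containing maximal torus). The mild subtlety is simply making sure that the summation range of the character formula is read correctly as conjugates of $T$ containing the semisimple part, so that the non-conjugacy hypothesis on $T$ and $T'$ can be applied cleanly; note in particular that the ``general position'' hypothesis on $\theta$ is not actually needed for this vanishing statement, and the conclusion holds for all $\theta\in\widehat{T^F}$ provided $t\in T'^F$ is regular and $T,T'$ are not $G^F$-conjugate.
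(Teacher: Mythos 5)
Your proposal is correct and follows essentially the same route as the paper: both evaluate the Deligne--Lusztig character formula at $t$ and observe that, since a regular semisimple element lies in a unique maximal torus, $t$ is not $G^F$-conjugate into $T^F$ when $T$ and $T'$ are not $G^F$-conjugate, so the sum is empty. Your side remark that the general-position hypothesis on $\theta$ is not needed for this vanishing is also consistent with the paper's argument.
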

\begin{proof}
From the formula \cite[Theorem 2.2.16]{geck2020character}, if $t$ is not conjugate in $G^F$ to any element of $T^F$, then the value is zero (cf. \cite[Example 2.2.17 (a)]{geck2020character}). Note that a regular semisimple element is in a unique maximal torus, so $t$ is not conjugate to any element of $T^F$ since $T$ and $T'$ are not $G^F$-conjugate. 
\end{proof}

\subsubsection{The number of in general position characters}\label{sss:generalregularelementsrelation} In this subsection, we consider how to count the number of in general position characters in $\widehat{T^F}$. 

Let us recall that there is a canonical bijection between $G^F$-orbits of pairs $(T,\theta)$ (where $T$ is a $F$-stable maximal torus of $G$ and $\theta \in \widehat{T^F}$) and $\chG^F$-orbits of pairs $(\check{T},s)$ (where $\check{T}$ is a $F$-stable maximal torus of $\chG$ and $s\in \check{T}^F$). This bijection is well-explained in \cite[Corollary 2.5.14]{geck2020character}. Briefly, if $(T,\theta)$ and $(\check{T},s)$ correspond and $T$ is twisted by $w\in W$ of a split maximal torus of $G$, then $\check{T}$ is twisted by $w^{-1}$ of a split maximal torus of $\chG$. {Note that in \cite{geck2020character}, they use the notation $w^*$ under an isomorphism $W(G)\rightarrow W(\chG)$. However, in our case, $w^*=w$ since the isomorphism $W(G)\rightarrow W(\chG)$ is the trivial in our paper. This is because this isomorphism is induced from the trivial map $X\rightarrow X$ with the fact that the root data of $G$ and $\chG$ are $(X,\Phi,Y,\Phi^\vee)$ and $(Y,\Phi^\vee,X,\Phi)$, cf. \cite[Proposition 4.2.3]{carter1985finite}.}
Furthermore, the construction of $s=s_\theta$ is written in \cite[\S2]{geck2020character}, but we leave the construction in this paper.
Now, if the pairs  $(T,\theta)$ and  $(\check{T},s)$  correspond to each other in this way, we write $R_{\check{T}}^G(s)=R_T^G(\theta)$. 
Let us introduce the following subset of $\widehat{G^F}$, which is called a {rational series of characters} of $G^F$, or {Lusztig series of characters}.
\begin{defe} \cite[Definition 2.6.1]{geck2020character}
Let $s\in \chG^F$ be a semisimple element. Then we define $\mathcal{E}(G^F,s)$ to be the set of $\chi \in \widehat{G^F}$ such that $\langle R_{\check{T}}^G(s),\chi \rangle\neq 0$ for some $F$-stable maximal torus $\check{T}\subset \chG$ with $s\in \check{T}$.
\end{defe} 

Then we have the following decomposition:
\begin{thm}\label{thm:bijectionlusztigjordan}
 \cite[Theorem 2.6.2]{geck2020character}
If $s_1,s_2\in \chG^F$ are semisimple and are $\chG^F$-conjugate, then $\mathcal{E}(G^F,s_1)=\mathcal{E}(G^F,s_2)$. Furthermore, 
\[
\widehat{G^F}=\underset{s}{\bigsqcup}\ \mathcal{E}(G^F,s),
\]
where $s$ runs over a set of representatives of the conjugacy classes of semisimple elements in $\chG^F$.
\end{thm}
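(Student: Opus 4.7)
The plan is to establish the two assertions of Theorem \ref{thm:bijectionlusztigjordan} in turn: well-definedness of $\mathcal{E}(G^F,s)$ on $\chG^F$-conjugacy classes, and the decomposition $\widehat{G^F}=\bigsqcup_s \mathcal{E}(G^F,s)$. Well-definedness and disjointness are essentially formal consequences of the Deligne-Lusztig framework set up in Section \ref{subsect:Deligne-Lusztig}; the genuine content of the theorem lies in the exhaustion property, which will be the main obstacle.

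For the class-function property, I would appeal to the bijection in Section \ref{sss:generalregularelementsrelation} between $G^F$-orbits of pairs $(T,\theta)$ and $\chG^F$-orbits of pairs $(\check{T},s)$. If $s_1=gs_2 g^{-1}$ with $g\in\chG^F$, then for any $F$-stable maximal torus $\check{T}_1\ni s_1$ the conjugate $\check{T}_2=g^{-1}\check{T}_1 g$ is an $F$-stable maximal torus containing $s_2$, and $(\check{T}_2,s_2)$ lies in the same $\chG^F$-orbit as $(\check{T}_1,s_1)$. Hence $R_{\check{T}_1}^G(s_1)=R_{\check{T}_2}^G(s_2)$, which forces $\mathcal{E}(G^F,s_1)=\mathcal{E}(G^F,s_2)$. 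For disjointness, suppose $s_1,s_2$ are semisimple and not $\chG^F$-conjugate; then for any choices $(\check{T}_i,s_i)$ and corresponding $(T_i,\theta_i)$, the pairs $(T_1,\theta_1)$ and $(T_2,\theta_2)$ are not $G^F$-conjugate. Applying \cite[Theorem 6.8]{DL} (the general inner product formula that specialises to Lemma \ref{lem:orthgonlaity}) gives $\langle R_{\check{T}_1}^G(s_1),R_{\check{T}_2}^G(s_2)\rangle=0$, so no irreducible character of $G^F$ can appear in both Lusztig series.

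The main obstacle is the exhaustion step: every $\chi\in\widehat{G^F}$ must occur as a constituent of at least one $R_{\check{T}}^G(s)$. My approach would be to establish the Plancherel-type identity of Deligne-Lusztig,
\[
\frac{1}{|G^F|}\sum_{T}\frac{1}{|T^F|}\sum_{\theta\in\widehat{T^F}} R_T^G(\theta)(1)\cdot\overline{R_T^G(\theta)(g)}=\delta_{g,1},
\]
where the outer sum runs over $F$-stable maximal tori of $G$. Pairing this identity against $\chi\in\widehat{G^F}$ yields $\sum_{T,\theta}\frac{R_T^G(\theta)(1)}{|G^F|\cdot|T^F|}\langle R_T^G(\theta),\chi\rangle=\chi(1)/|G^F|\neq 0$, forcing $\chi$ to appear in some $R_T^G(\theta)$, equivalently in some $\mathcal{E}(G^F,s)$. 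The delicate part is the identity itself, which is the deepest ingredient of the theory: one proves it by applying the Lefschetz fixed-point formula to a Deligne-Lusztig variety attached to a Borel subgroup and using vanishing of the relevant $\ell$-adic cohomology off the diagonal, as carried out in \cite{DL}.
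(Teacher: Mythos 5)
The paper gives no proof of this statement---it is quoted verbatim from \cite[Theorem 2.6.2]{geck2020character}---so your attempt can only be measured against the standard proof in the literature. Your first step (invariance of $\mathcal{E}(G^F,s)$ under $\chG^F$-conjugacy of $s$, via the orbit correspondence for pairs) is fine, but the disjointness step has a genuine gap. Deligne--Lusztig characters are \emph{virtual} characters, so $\langle R_{\check{T}_1}^G(s_1),R_{\check{T}_2}^G(s_2)\rangle=0$ does not imply that they share no irreducible constituent: the inner product equals $\sum_{\chi}\langle R_{\check{T}_1}^G(s_1),\chi\rangle\langle \chi,R_{\check{T}_2}^G(s_2)\rangle$ and the terms can cancel. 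Indeed the same reasoning would prove far too much: for non-$G^F$-conjugate tori $T,T'$ the formula of \cite[Theorem 6.8]{DL} likewise gives $\langle R_T^G(1),R_{T'}^G(1)\rangle=0$, yet the trivial character occurs (with multiplicity one) in both $R_T^G(1)$ and $R_{T'}^G(1)$. (The paper's Lemma \ref{lem:orthgonlaity} avoids this trap because there both characters are assumed irreducible up to sign.) The disjointness of the series is precisely the exclusion theorem of Deligne and Lusztig \cite[Theorem 6.2, Corollary 6.3]{DL}: a common constituent forces the pairs to be \emph{geometrically} conjugate, and the proof of that is a separate, deeper argument, not a formal consequence of the orthogonality relations. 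One must then also pass from geometric conjugacy ($\chG$-classes) to rational conjugacy ($\chG^F$-classes); under the paper's standing hypothesis that $Z(G)$ is connected these coincide because centralisers of semisimple elements of $\chG$ are connected, but this needs to be said, and the theorem as stated in \cite{geck2020character} is proved without that hypothesis.

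Your exhaustion step is the right idea and is essentially how \cite[Corollary 7.7]{DL} is proved: the characteristic function of the identity (equivalently the regular character) is a uniform function, and pairing its expansion in the $R_T^G(\theta)$ against $\chi$ yields $\chi(1)/|G^F|\neq 0$, so $\chi$ meets some $R_{\check{T}}^G(s)$ and hence some $\mathcal{E}(G^F,s)$. However, the identity as you display it is mis-normalised: since $R_T^G(\theta)(1)=\epsilon_G\epsilon_T\,|G^F|_{p'}/|T^F|$ already carries the factor $1/|T^F|$, your extra $1/|T^F|$ makes the formula fail already when $G$ is a torus, where the left-hand side becomes $\delta_{g,1}/|T^F|$. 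This is repairable (any correct uniform expansion of $\delta_1$ suffices for the pairing argument), but as written the displayed identity is false and should be corrected or replaced by a citation; the substantive missing ingredient in your proposal remains the exclusion theorem underlying disjointness.
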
When the centre of $G$ is connected, we also have the following useful result:
\begin{thm}\label{thm:bijectiondimensionformula} \cite[Theorem 2.6.4]{geck2020character}
Let us assume that $Z(G)$ is connected. Then for a semisimple element $s\in \chG^F,$ there is a bijection
\[
\mathcal{E}(G^F,s)\overset{1-1}{\longleftrightarrow} Uch(C_{\chG}(s)^F),\quad \chi \longleftrightarrow \chi_u
\]
where $Uch(H^F)$ is the set of unipotent characters of $H^F.$ Furthermore, under the correspondence, we have that $\chi(1)=|\chG^F:C_{\chG}(s)^F|_{p'}\chi_u(1)$.
\end{thm}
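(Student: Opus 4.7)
The plan is to follow Lusztig's construction of the Jordan decomposition of characters. The starting point is the partition of $\widehat{G^F}$ into rational series $\mathcal{E}(G^F,s)$ from Theorem \ref{thm:bijectionlusztigjordan}, so it suffices to build a bijection $\mathcal{E}(G^F,s) \leftrightarrow \mathrm{Uch}(C_{\chG}(s)^F)$ for each semisimple $s \in \chG^F$ separately. The connectedness of $Z(G)$ is essential here: it guarantees that $C_{\chG}(s)$ is a connected reductive $F$-stable subgroup of $\chG$, so that the right-hand side is even defined.

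Next I would exploit the fact that the $F$-stable maximal tori $\check{T} \subset \chG$ containing $s$ are precisely the $F$-stable maximal tori of $C_{\chG}(s)$. This yields a natural pairing $R_{\check{T}}^G(s) \leftrightarrow R_{\check{T}}^{C_{\chG}(s)}(1)$ of Deligne-Lusztig virtual characters on the two sides. Using \cite[Theorem 6.8]{DL} (the same input underlying Lemma \ref{lem:orthgonlaity}) one checks that
\[
\langle R_{\check{T}}^G(s), R_{\check{T}'}^G(s) \rangle_{G^F} = \langle R_{\check{T}}^{C_{\chG}(s)}(1), R_{\check{T}'}^{C_{\chG}(s)}(1) \rangle_{C_{\chG}(s)^F},
\]
because on the left only those elements of $N(\check{T}, \check{T}')^F/\check{T}'^F$ that fix $s$ contribute, and these are precisely the elements of the corresponding Weyl-group double coset inside $C_{\chG}(s)$ counted on the right. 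The two $\mathbb{Z}$-lattices of virtual characters are therefore isometric, and a chosen isometry produces a bijection between the sets of irreducibles spanning them.

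The degree formula drops out of the standard Deligne-Lusztig degree formula $R_{\check{T}}^G(s)(1) = \epsilon_G \epsilon_{\check{T}}|G^F:\check{T}^F|_{p'}$ applied on both sides, combined with $|G^F|_{p'} = |\chG^F|_{p'} = |\chG^F : C_{\chG}(s)^F|_{p'} \cdot |C_{\chG}(s)^F|_{p'}$ and a sign-compatibility check for $\epsilon_G \epsilon_{C_{\chG}(s)}$. The main obstacle is upgrading the lattice isometry to a canonical bijection on irreducibles: counting dimensions and inner products does not by itself pair up the distinguished bases of irreducible characters. Lusztig's original argument handles this case-by-case, relying on the classification of unipotent characters together with the non-abelian Fourier transforms attached to families; a type-free route embeds $G$ in a connected reductive group with connected centre and uses Shintani descent. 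Either approach is substantial and constitutes the heart of the theorem.
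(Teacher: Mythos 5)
The paper does not prove this statement at all: it is quoted verbatim as \cite[Theorem 2.6.4]{geck2020character}, i.e.\ Lusztig's Jordan decomposition of characters, and is used as a black box. So the relevant comparison is whether your sketch actually constitutes a proof, and by your own admission it does not. The genuine gap is exactly the step you flag at the end: the inner-product computation via \cite[Theorem 6.8]{DL} only shows that the two $\Z$-lattices spanned by the virtual characters $R_{\check{T}}^G(s)$ (for $\check{T}\ni s$) and $R_{\check{T}}^{C_{\chG}(s)}(1)$ are isometric. An abstract isometry of lattices does not induce a bijection between the sets of irreducible constituents, does not single out a canonical such bijection, and in particular does not give the multiplicity compatibility $\langle \chi, R_{\check{T}}^G(s)\rangle = \pm\langle \chi_u, R_{\check{T}}^{C_{\chG}(s)}(1)\rangle$ that one needs. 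Establishing this is the actual content of the theorem, and Lusztig's proof goes through the classification of unipotent characters, families and the attached Fourier transforms (or, in later treatments, regular embeddings and uniqueness statements for the Jordan decomposition); none of that is reproduced or replaced in your proposal.

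The degree formula suffers from the same problem: the identity $R_{\check{T}}^G(s)(1)=\epsilon_G\epsilon_{\check{T}}\,|G^F:T^F|_{p'}$ gives the degree of the virtual characters, not of an arbitrary $\chi\in\mathcal{E}(G^F,s)$. To deduce $\chi(1)=|\chG^F:C_{\chG}(s)^F|_{p'}\,\chi_u(1)$ you need to know that $\chi$ and $\chi_u$ occur with matching multiplicities in corresponding (suitably uniform combinations of) Deligne--Lusztig characters, which again presupposes the full Jordan decomposition rather than following from the isometry. In short, your outline correctly identifies the standard strategy and honestly locates the hard core, but as written it is a sketch of why the theorem is plausible, not a proof; the appropriate move in the context of this paper is to cite \cite[Theorem 2.6.4]{geck2020character} (as the authors do), or else to genuinely carry out Lusztig's argument, which is far beyond the scope of a short proof.
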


\noindent With these results, let us compute the number of characters of in general position of $\widehat{T[w]^F}$.
\begin{prop}\label{prop:numberofingeneralpositionandregular}
The number of in general position characters of $\widehat{T[w]^F}$ (up to $G^F$-action) and the number of regular elements in $\check{T}[w]^F$ (up to $\chG^F$-action) are the same. Furthermore, the number of characters $\theta\in \widehat{T[w_i]^F}$ of in general position (up to $G^F$-action) is $\frac{\check{f_i}}{c_i}$, where $\check{f_i}$ is the number of regular elements in $\check{T}[w_i^{-1}]^F$.
\end{prop}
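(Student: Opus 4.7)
My plan is to deduce the proposition from the canonical bijection between $G^F$-orbits of pairs $(T,\theta)$ and $\chG^F$-orbits of pairs $(\check{T},s)$ recalled in Section \ref{sss:generalregularelementsrelation}. Under this bijection, if $T=T[w]$ then $\check{T}=\check{T}[w^{-1}]$. The central claim is that a character $\theta$ is in general position if and only if the corresponding semisimple element $s_\theta$ is regular, so that $G^F$-orbits of in general position characters in $\widehat{T[w]^F}$ are in bijection with $\chG^F$-orbits of regular semisimple elements in $\check{T}[w^{-1}]^F$.

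To establish this correspondence I would argue via stabilisers. On the character side, $\theta\in\widehat{T[w]^F}$ is in general position precisely when its stabiliser in $N_G(T[w])^F/T[w]^F\cong C_W(w)$ (cf.\ Proposition \ref{prop:FstableTorifacts}) is trivial. On the dual side, by Steinberg's theorem the stabiliser of a semisimple $s\in\check{T}[w^{-1}]^F$ in $N_{\chG}(\check{T}[w^{-1}])^F/\check{T}[w^{-1}]^F\cong C_{W(\chG)}(w^{-1})$ is trivial if and only if $C_{\chG}(s)^\circ=\check{T}[w^{-1}]$, which, in view of Definition \ref{def:regularsemisimple} and the fact that $C_{\chG}(s)^\circ$ is reductive of full rank, is equivalent to $s$ being regular. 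Since the Lusztig bijection is equivariant for the compatible Weyl-group actions on $\theta$ and on $s_\theta$, these two trivial-stabiliser conditions match orbit-by-orbit. This yields the first assertion (noting in passing that the number of regular elements in $\check{T}[w]^F$ equals that in $\check{T}[w^{-1}]^F$, because $w$ and $w^{-1}$ have the same characteristic polynomial on the cocharacter lattice).

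For the explicit count, I would then apply Proposition \ref{prop:properties-counts-of-regular-semisimple} to $\chG$ and the torus $\check{T}[w_i^{-1}]$, which gives $\check{f_i}/\check{c_i}$ orbits of regular elements in $\check{T}[w_i^{-1}]^F$, where $\check{c_i}=|C_{W(\chG)}(w_i^{-1})|$. Using the identification $W(G)\cong W(\chG)$ recalled in Section \ref{sss:generalregularelementsrelation}, together with the elementary fact $C_W(w_i)=C_W(w_i^{-1})$ (since $gw_ig^{-1}=w_i$ iff $gw_i^{-1}g^{-1}=w_i^{-1}$), we get $\check{c_i}=c_i$. Combining this with the bijection from the previous paragraph delivers the desired count $\check{f_i}/c_i$.

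The main obstacle I anticipate is verifying the equivariance of the Lusztig bijection cleanly. Although the identifications $N_G(T[w])^F/T[w]^F\cong C_W(w)$ and $N_{\chG}(\check{T}[w^{-1}])^F/\check{T}[w^{-1}]^F\cong C_{W(\chG)}(w^{-1})$ are both canonical, confirming that the resulting $C_W(w)$-actions on $\theta$ and $s_\theta$ are intertwined requires unpacking the construction of $s_\theta$ sketched in \cite[\S 2]{geck2020character}. This is folklore in the theory of Lusztig series but is the one place where more than bookkeeping is needed.
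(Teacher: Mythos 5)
Your route is genuinely different from the paper's: the paper proves the key equivalence ``$\theta$ in general position $\Leftrightarrow s$ regular'' by a degree comparison through the Jordan decomposition of Theorem \ref{thm:bijectiondimensionformula}, using Lemma \ref{lem:pdividethedimuni} to exclude nontrivial unipotent constituents, whereas you argue entirely on the dual side via stabilisers. However, your central step has a genuine gap. Steinberg's connectedness theorem gives $C_{\chG}(s)=C_{\chG}(s)^\circ$ and hence identifies the stabiliser of $s$ in the \emph{full} Weyl group $W$ with the Weyl group $W_s$ of $C_{\chG}(s)$, so ``$\mathrm{Stab}_W(s)=1 \Leftrightarrow s$ regular'' is fine. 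But what your argument actually uses is triviality of the stabiliser of $s$ in the rational relative Weyl group $N_{\chG}(\check{T}[w^{-1}])^F/\check{T}[w^{-1}]^F\cong C_W(w^{-1})$, i.e.\ triviality of $W_s\cap C_W(w^{-1})$. The implication ``$W_s\cap C_W(w^{-1})=1\Rightarrow W_s=1$'' is not part of Steinberg's theorem: you must show that conjugation by $w^{-1}$ on the nontrivial, $w^{-1}$-normalised reflection subgroup $W_s$ always has a nontrivial fixed point. This is true, but it requires an argument, e.g.: if a simple component of the subsystem $\Phi_s$ has $-1$ in its Weyl group, that central element is fixed by every isometry-induced automorphism; for the remaining simple types ($A_n$, $D_{\mathrm{odd}}$, $E_6$) one has $\mathrm{Aut}(\Phi_s)=\pm W_s$, so the induced automorphism is inner and its fixed set contains a nontrivial centraliser; and for components permuted by $w^{-1}$ one takes a ``norm'' over the orbit of a fixed element of the component stabilised by the appropriate power. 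Without some such argument (or the paper's degree argument), the hard direction ``$\theta$ in general position $\Rightarrow s$ regular'' is unproved; note that you flagged equivariance of the Lusztig bijection as the main obstacle, but that part is comparatively routine bookkeeping, while this is the real crux.

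If you do fill this in, your approach has a tangible advantage over the paper's proof: it avoids Lemma \ref{lem:pdividethedimuni} and therefore the implicit hypotheses relating $q$, $p$ and $|W|$ that enter there. Your concluding count, via Proposition \ref{prop:properties-counts-of-regular-semisimple} applied to $\chG$ together with $|C_W(w_i)|=|C_W(w_i^{-1})|$, agrees with what the paper does.
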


\begin{proof}

Let us assume that $(T[w],\theta)$ and $(\check{T}[w^{-1}],s)$ correspond. Then we need to show that $\theta$ is in general position if and only if $s$ is regular.

 If $\theta$ is in general position, we know that $\epsilon_G\epsilon_{T[w]}R_{T[w]}^G(\theta)$ is an irreducible character of $G^F$, and so we have that $\epsilon_G\epsilon_{T[w]}R_{T[w]}^G(\theta)\in \mathcal{E}(G^F,s)$. Let us assume that $s$ is not regular. Then from the dimension formula in Theorem \ref{thm:bijectiondimensionformula}, we have
\[
\epsilon_G\epsilon_{T[w]}R_{T[w]}^G(\theta)(1)=|\chG^F:C_{\chG}(s)^F|_{p'}\chi_u(1)
\]for a unipotent character $\chi_u\in Uch(C_{\chG}(s)^F)$. Recall that we have \[\epsilon_G\epsilon_{T[w]}R_{T[w]}^G(\theta)(1)=|G^F:T[w]^F|_{p'}.\] Then if $\chi_u$ is not the trivial character, there is a contradiction since $p \mid \chi_u(1)$ from Lemma \ref{lem:pdividethedimuni}. Then this implies that $\chi_u=1$, and we have 
\[
|G^F:T[w]^F|_{p'}=|\chG^F:C_{\chG}(s)^F|_{p'}\Rightarrow |T[w]^F|_{p'}=|C_{\chG}(s)^F|_{p'}.
\]
From our assumption on $G$, $C_{\chG}(s)$ is connected, so the order formula of $|C_{\chG}(s)^F|$ (cf. \cite[Theorem 1.6.7]{geck2020character}) implies that the degree of $|C_{\chG}(s)^F|_{p'}$ is at least $r+1$ (since we assumed that $s$ is not regular). However, it is obvious that the degree of $|T[w]^F|_{p'}$ is $r$, we get that $|T[w]^F|_{p'}\neq |C_{\chG}(s)^F|_{p'}$, which is a contradiction. Therefore, $s$ needs to be regular.

Conversely, let us assume that $s$ is regular, and then we have $|Uch(C_{\chG}(s)^F)|=1$ since $C_{\chG}(s)$ is a torus. This implies that $\mathcal{E}(G^F,s)$ is also a single set, and then the corresponding $R_{T[w]}^G(\theta)$ is irreducible, since irreducible components of $R_{T[w]}^G(\theta)$ are in $\mathcal{E}(G^F,s)$. Therefore, $\theta$ is in general position.

For the last statement, we need to count the number of regular elements in $\check{T}[w]^F$ (up to $\chG^F$-action). Note that $w$ and $w^{-1}$ are in the same conjugacy class (cf. \cite[\S9, Corollary]{carter1972conjugacy}), so we have that $T[w]^F\simeq T[w^{-1}]^F$. This means that $c_i=|C_W(w_i)|=|C_W(w_i^{-1})|$. Then, since the derived subgroup of $\chG$ is simply connected, we can prove this by applying the result in Proposition \ref{prop:properties-counts-of-regular-semisimple}. 
\end{proof}

\begin{lem}\label{lem:pdividethedimuni}
Let us assume that the size $q=p^e=|k^F|$ for $\mathrm{char}\ k=p$ satisfies conditions $\mathrm{(S1)}$--$\mathrm{(S3)}$ in Theorem \ref{thm:main}. Then for any non-trivial unipotent character $\rho$ of $G^F$, we have that $p\mid \rho(1)$.
\end{lem}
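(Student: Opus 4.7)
The plan is to exploit the \emph{generic degree polynomials} associated to unipotent characters via Lusztig's classification. For every unipotent character $\rho$ of $G^F$, there is a polynomial $D_\rho(t) \in \mathbb{Q}[t]$, the generic degree, with $\rho(1) = D_\rho(q)$, and an explicit factorisation
\begin{equation*}
D_\rho(t) = \frac{1}{n_\rho}\, t^{a_\rho} \prod_{d\geq 1} \Phi_d(t)^{m_{d,\rho}},
\end{equation*}
where $a_\rho \in \mathbb{Z}_{\geq 0}$ is Lusztig's $a$-invariant, $n_\rho$ is a positive integer, $\Phi_d$ is the $d$-th cyclotomic polynomial, and $m_{d,\rho} \in \mathbb{Z}_{\geq 0}$ (see for instance \cite[\S13]{carter1985finite} or \cite[Ch.~4]{geck2020character}).

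The proof will then proceed in two steps. First, I would observe that the trivial representation is the unique unipotent character with $a_\rho = 0$; every other unipotent character has $a_\rho \geq 1$, so $D_\rho(t)$ is divisible by $t$. This is visible from the explicit tables of generic degrees: for classical types it follows from the hook-length formulas, and for exceptional types it can be checked case by case from Lusztig's list. Second, one shows that under hypothesis (S3) the denominator $n_\rho$ is well controlled. Since $n_\rho$ divides $|W|$ and $p$ is very good for $G$, a short case analysis on Weyl groups yields the bound $|W|_p \leq |l!|_p$ (using $p \nmid l+1$ in type $A_l$ and $p \neq 2$ in types $B_l, C_l, D_l$, while the exceptional types of bounded rank are handled individually). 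The assumption $q > |l!|_p$ in (S3), i.e.\ $p^e > p^{v_p(l!)}$, then forces $v_p(q) = e > v_p(n_\rho)$.

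Combining the two steps, for any non-trivial unipotent character $\rho$ one obtains
\begin{equation*}
v_p(\rho(1)) \;\geq\; a_\rho\, v_p(q) \;+\; \sum_{d\geq 1} m_{d,\rho}\, v_p(\Phi_d(q)) \;-\; v_p(n_\rho) \;\geq\; v_p(q) - v_p(n_\rho) \;\geq\; 1,
\end{equation*}
so $p \mid \rho(1)$, as required. The main obstacle is the uniform control of $v_p(n_\rho)$ across all Cartan types and all unipotent characters; this requires either appealing to Lusztig's classification and explicit generic-degree tables, or giving a type-free argument that $n_\rho$ inherits its $p$-part from a relative Weyl subgroup, which by the very good hypothesis has $p$-part at most $|l!|_p$. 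This is the point at which condition (S3) enters essentially.
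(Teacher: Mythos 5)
Your proposal is correct and follows essentially the same route as the paper: express $\rho(1)$ as a generic degree with denominator $n_\rho$ dividing $|W|$, use that $a_\rho\geq 1$ exactly for non-trivial unipotent characters, and use (S3) together with the very good hypothesis to force $v_p(q)>v_p(n_\rho)$, hence $p\mid\rho(1)$. The only difference is cosmetic: where you bound $|W|_p\leq |l!|_p$ by a case analysis over Cartan types, the paper gets it uniformly from the factorisation $|W|=l!\cdot n_1\cdots n_l\cdot f$ (coefficients of the highest root and the index of connection), with $p\nmid n_i$ by very goodness and $p\nmid f$ by a lemma of Herpel.
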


\begin{proof}
From \cite[\S3.3]{geck2018first}, $\rho(1)$ has of the form $\frac{1}{n_\rho}(q^{A_\rho}\pm\cdots \pm q^{a_\rho})$ (with $n_\rho \,|\, |W|$), and $a_\rho>0$ if and only if $\rho$ is non-trivial from \cite[Proposition 4.5.9]{geck2020character}. It is easy to see that if $p$ is larger than the size of Weyl group, then $p \mid \rho(1)$ is obvious since $n_\rho \mid |W|$.

Now, let us show that $p\mid \rho(1)$ in our assumption. From \cite[Chapter VI, \S2.4]{bourbaki}, $|W|=l!\cdot n_1n_2\cdots n_l\cdot f$, where $l$ is the semisimple rank of $G$, $n_i$ the coefficients of the highest root, and $f=|\Lambda/ \langle \Phi(G)\rangle |$ for the weight lattice $\Lambda$ of $G$. Note that  $q \nmid n_i$ for all $i$ from the definition of very good prime, and $q\nmid f$ from  \cite[Lemma 2.10 (b)]{herpel}. Then from assumption (S3), we can see that $p\mid \frac{q^{a_\rho}}{|W|}$, so we can conclude that $p\mid \rho(1).$
\end{proof}

\subsection{Counting lemma}
 
  In this subsection, we give  the size of $\widehat{G^F}$ and $|[G_{rss}^{F}]|$, where $[G_{rss}^{F}]$ denotes the set of regular semisimple conjugacy classes of $G^F$.  

\begin{prop}\label{prop:irresize}
	The sizes $|\widehat{G^F}|$ and $|[G_{rss}^F]|$ are monic polynomials of degree $r$.
\end{prop}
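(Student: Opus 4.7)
The plan is to handle the two quantities separately, beginning with $|[G_{rss}^F]|$ and then bootstrapping from it to $|\widehat{G^F}|$ via Jordan decomposition of characters.

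For $|[G_{rss}^F]|$, I would use the fact that every regular semisimple element of $G^F$ is contained in a unique maximal torus, namely its centralizer. So the regular semisimple conjugacy classes of $G^F$ partition according to which $G^F$-conjugacy class of maximal torus contains them. By Proposition \ref{prop:properties-counts-of-regular-semisimple}, the number of classes coming from $T_i^F$ is $f_i/c_i$, where $f_i$ is a monic polynomial in $q$ of degree $r$. Therefore
\[
|[G_{rss}^F]|=\sum_{i=1}^{\tau}\frac{f_i(q)}{c_i},
\]
and the leading coefficient is $\sum_{i=1}^{\tau}1/c_i = 1$ by equation \eqref{eq:sumofreciprocalci}. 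This is a polynomial with rational coefficients that takes integer values at infinitely many $q$, and its leading term is exactly $q^r$, which gives the first claim.

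For $|\widehat{G^F}|$, I would use the decomposition into Lusztig series from Theorem \ref{thm:bijectionlusztigjordan} together with the bijection in Theorem \ref{thm:bijectiondimensionformula}:
\[
|\widehat{G^F}|=\sum_{[s]}|Uch(C_{\chG}(s)^F)|,
\]
where $[s]$ ranges over semisimple $\chG^F$-conjugacy classes. When $s$ is regular semisimple, $C_{\chG}(s)$ is a maximal torus, so $|Uch(C_{\chG}(s)^F)|=1$, and the total contribution from regular $s$ is exactly $|[\chG_{rss}^F]|$. Since $\chG$ inherits the dual structural hypotheses (connected center iff $[G,G]$ simply connected, and vice versa), the first part applied to $\chG$ shows this contribution is monic in $q$ of degree $r$. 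It remains to show the non-regular contribution is a polynomial of degree strictly less than $r$. For this, I would stratify non-regular semisimple classes by the twisted isomorphism type of $L=C_{\chG}(s)^{\circ}$: for each fixed type, $|Uch(L^F)|$ depends only on type and not on $q$, while the number of semisimple classes with centralizer of that type is controlled by counting $W$-orbits of elements in the center of the relevant Levi, which is a polynomial in $q$ of degree $\dim Z(L)^{\circ}=r-\mathrm{rk}_{ss}(L)<r$ whenever $s$ is non-regular.

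The main obstacle is precisely the last step: rigorously establishing the polynomial dependence on $q$ and the degree bound for the non-regular part. This requires invoking structural facts (e.g., an analogue of Proposition \ref{prop:properties-counts-of-regular-semisimple} for Levi subgroups, or results in the spirit of Lehrer's rational counting formulas from \cite{lehrer1992rational}) in order to stratify the semisimple classes of $\chG^F$ by centralizer type and verify that each stratum contributes a polynomial in $q$ of degree $r-\mathrm{rk}_{ss}(L)$. Once that bookkeeping is done, adding the dominant regular contribution $q^r+\ldots$ and the lower-degree non-regular correction gives a monic polynomial of degree $r$ for $|\widehat{G^F}|$.
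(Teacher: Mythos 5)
Your treatment of $|[G_{rss}^F]|$ is correct and is in fact more self-contained than the paper's: writing $|[G_{rss}^F]|=\sum_{i=1}^{\tau}f_i(q)/c_i$ via Proposition \ref{prop:properties-counts-of-regular-semisimple} and using $\sum_i 1/c_i=1$ from equation \eqref{eq:sumofreciprocalci} does give a monic expression of degree $r$, granted that proposition's assertion that each $f_i$ is an honest monic polynomial in $q$. (The paper instead disposes of both claims at once by citing \cite[\S5.2]{BK23}, whose count of conjugacy classes of $G^F$ — which equals $|\widehat{G^F}|$ — is driven by exactly this regular semisimple dominance.)

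For $|\widehat{G^F}|$, however, your argument has a genuine gap, and it sits precisely where you flag it. The Lusztig--Jordan decomposition via Theorems \ref{thm:bijectionlusztigjordan} and \ref{thm:bijectiondimensionformula} correctly reduces the problem to showing that the non-regular semisimple classes $[s]$ of $\check{G}^F$, weighted by $|Uch(C_{\check{G}}(s)^F)|$, contribute a \emph{polynomial} in $q$ of degree at most $r-1$; but you never establish this, deferring the stratification by centralizer type as ``bookkeeping.'' Note that a degree bound alone is easy (the total number of semisimple classes of $\check{G}^F$ is $q^{l}|Z(\check{G})^F|$ by \cite[Theorem 3.7.6]{carter1985finite}, the regular ones account for a monic degree-$r$ polynomial, and unipotent-character counts per type are bounded constants, so the non-regular part is $O(q^{r-1})$), yet this only yields $|\widehat{G^F}|=q^r+O(q^{r-1})$, not that it is a monic polynomial. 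The actual polynomiality (more precisely, polynomiality on residue classes, as the paper itself concedes in Section \ref{ss:proofmain2}) of the stratified counts is the nontrivial content, and it is exactly what the Deriziotis-type results and \cite[\S5.2]{BK23} provide; the paper's proof consists of invoking that reference. So either quote \cite{BK23} (or \cite{deriziotis}, \cite{lehrer1992rational}) for the per-type counts, or carry out the stratification you sketch; as written, the second half of your proof is an outline rather than a proof.
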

\begin{proof}
	The statement about the size of $|\widehat{G^F}|$ follows from \cite[\S5.2]{BK23} easily since the number of distinct irreducible characters is the same with the number of conjugacy classes in $G^F$. About the size $|[G_{rss}^{F}]|$, in \cite[\S5.2]{BK23}, they proved the previous result about $|\widehat{G^F}|$ using only regular semisimple elements. So this proof induces that the size $|[G_{rss}^{F}]|$ is a monic polynomial with degree $r.$
\end{proof}

\section{Sparsity of character tables of finite reductive groups} \label{s:proofmain} 

The central idea in the proofs of Theorems \ref{thm:fixedgroup} and \ref{thm:main} is to focus on Deligne–Lusztig characters arising from torus characters in general position, and to evaluate these characters on regular elements of tori in distinct conjugacy classes, as established in Proposition \ref{prop:vanishingvalue}.
This allows us to replace our initial limit by one that depends essentially only on the sizes of conjugacy classes in the Weyl group.


\subsection{A lower bound}\label{ss:lowerbound}

The first step of the proof our results based on the taking a lower bound of the following term:
\begin{equation*}
\frac{|\{(\chi,O) \in\widehat{G^{F}}\times [G^{F}]\,|\, \chi(O)=0\}|}{|\widehat{G^{F}}|\times |[G^{F}]|}\end{equation*}
for $G^F$ using observations in Section \ref{subsect:Deligne-Lusztig}.

\begin{prop}\label{prop:step1-genpos-semisimple-only}
	Let $G$ be a group satisfying the assumptions in Section \ref{subs:assumptions+notation}. Then we have
	  		\begin{equation}\label{eq:lowerbound}
		\frac{|\{(\chi,O) \in \widehat{G^{F}}\times[G^{F}]\mid \, \chi(O)=0\}|}{|\widehat{G^{F}}|\times |[G^{F}]|}\geq 
		\frac{|[G_{rss}^F]|\cdot |[\check{G}_{rss}^F]|}{|[G^{F}]|^2}-\frac{(q+1)^{2r}}{q^{2l}|Z(G)^F|^2}\sum_{i=1}^{\tau }\frac{1}{c_i^2}.
	\end{equation}
\end{prop}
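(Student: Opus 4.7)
The plan is to build a lower bound on the numerator by restricting to pairs $(\chi,O)$ in which $\chi=\epsilon_G\epsilon_{T_i}R_{T_i}^G(\theta)$ is an irreducible Deligne--Lusztig character attached to a torus $T_i$ and a character $\theta\in\widehat{T_i^F}$ in general position, and $O$ is a regular semisimple $G^F$-class meeting a torus $T_j^F$ with $j\neq i$. Proposition \ref{prop:vanishingvalue} guarantees $\chi(O)=0$ for every such pair, so it suffices to count them.

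For each fixed index $i$, Proposition \ref{prop:numberofingeneralpositionandregular} provides $\check{f}_i/c_i$ in-general-position characters of $\widehat{T_i^F}$ up to $G^F$-conjugacy, while for each $j$ Proposition \ref{prop:properties-counts-of-regular-semisimple} gives $f_j/c_j$ regular semisimple classes in $T_j^F$. By Lemma \ref{lem:orthgonlaity}, distinct $G^F$-orbits of $(T_i,\theta)$ yield pairwise non-isomorphic irreducible characters, and each regular semisimple element lies in a unique maximal torus, so all such pairs $(\chi,O)$ are automatically distinct. Summing over $i$ and $j\neq i$ and invoking the identities $\sum_i \check{f}_i/c_i=|[\check{G}_{rss}^F]|$ and $\sum_j f_j/c_j=|[G_{rss}^F]|$ produces the count
\[
 \sum_{i=1}^{\tau}\frac{\check{f}_i}{c_i}\sum_{j\neq i}\frac{f_j}{c_j}
 = |[\check{G}_{rss}^F]|\cdot|[G_{rss}^F]| - \sum_{i=1}^{\tau}\frac{f_i\check{f}_i}{c_i^2}.
\]

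Dividing by $|\widehat{G^F}|\cdot|[G^F]|=|[G^F]|^2$ recovers the first term on the right-hand side of the claimed inequality and leaves the error term $|[G^F]|^{-2}\sum_i f_i\check{f}_i/c_i^2$ to bound. Each $|T_i^F|$ factors over $\mathbb{C}$ as $\prod_j(q-\lambda_j)$ with $\lambda_j$ roots of unity (cf.\ Proposition \ref{prop:FstableTorifacts}); pairing complex conjugates and using $\mathrm{Re}\,\lambda_j\ge -1$ gives $|T_i^F|\le(q+1)^r$, whence $f_i,\check{f}_i\le(q+1)^r$ and $f_i\check{f}_i\le(q+1)^{2r}$. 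Combined with the lower bound $|[G^F]|\ge q^l|Z(G)^F|$, arising from the classical count of semisimple $G^F$-classes under the standing hypotheses that $Z(G)$ is connected and $[G,G]$ is simply connected, this delivers the advertised error term.

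The main obstacle I expect is justifying $|[G^F]|\ge q^l|Z(G)^F|$: although intuitive (the right-hand side is essentially the number of semisimple classes), it requires the dual-group parametrisation of semisimple classes together with the hypotheses on $Z(G)$ and $[G,G]$ to ensure the leading $q$-behaviour falls the right way. Once this is in place, the remaining work is bookkeeping: confirming that no pair $(\chi,O)$ is double-counted as $(T_i,\theta)$ varies over different twist classes (supplied by Lemma \ref{lem:orthgonlaity}) and that regular semisimple classes from distinct tori $T_j^F$ are automatically disjoint because each such element lies in a unique maximal torus.
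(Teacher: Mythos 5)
Your proposal is correct and follows essentially the same route as the paper: restrict to irreducible Deligne--Lusztig characters from in-general-position torus characters paired with regular semisimple classes of non-conjugate tori (Proposition \ref{prop:vanishingvalue}), count them via Propositions \ref{prop:numberofingeneralpositionandregular} and \ref{prop:properties-counts-of-regular-semisimple} to get $|[G_{rss}^F]|\cdot|[\check{G}_{rss}^F]|-\sum_i f_i\check{f}_i/c_i^2$, and bound the error term using $f_i,\check f_i\leq |T_i^F|\leq (q+1)^r$ and $|[G^F]|\geq q^l|Z(G)^F|$. The step you flag as the main obstacle is exactly what the paper handles by citing \cite[Theorem 3.7.6]{carter1985finite} for the count $q^l|Z(G)^F|$ of semisimple classes (valid under the standing hypotheses on $Z(G)$ and $[G,G]$), so no further work is needed there.
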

Recall that here $\tau$ denotes the number of conjugacy classes in the Weyl group, $c_i$ the sizes of the centraliser subgroup of $w_i$ in $W$, $Z(G)$ the centre of $G$, and $r$ and $l$ denote the rank of $G$ and $[G,G],$ respectively.

\begin{proof}
	Recall that $T_i$ ($1\leq i\leq \tau$) are representatives of a conjugacy class of maximal tori in $G^F$, cf. Proposition \ref{prop:properties-counts-of-regular-semisimple}. 
	By Proposition \ref{prop:vanishingvalue}, if $ \theta\in \widehat{T_i^F}$ is an element in general position, $t\in T_j$ regular with $i\neq j$, then the Deligne-Lusztig character $R_{T_i}^G(\theta)\in \widehat{G^{F}}$ vanishes on the conjugacy class $[t]$ of $t.$ Using $|\widehat{G^{F}}|= |[G^{F}]|$, we then have
	\begin{equation}\label{eq:lowerbound_content}
	\frac{|\{(\chi,O) \in \widehat{G^{F}}\times[G^{F}]\,|\, \chi(O)=0\}|}{|\widehat{G^{F}}|\times |[G^{F}]|}\geq 
	\frac{\left|\left\{(R_{T_i}^G(\theta), [t]) \in \widehat{G^{F}}\times [G^{F}]\,\left|\, \substack{1\leq i,j\leq \tau,\ i\neq j,\\   \theta\in \widehat{T_i^F}\text{in general position,}\\ t\in T_j^F\text{ regular }} \right.\right\}\right|}{|[G^{F}]|^2}
	\end{equation}
	
    By Proposition \ref{prop:numberofingeneralpositionandregular}, the number of classes of regular elements in $\check{T_i}^F$ (equivalently, the number of classes of characters $ \theta\in \widehat{T_i^F}$ in general position) is $\displaystyle \frac{\check{f_i}}{c_i}.$ This gives a total of $\displaystyle |[\check{G}_{rss}^F]|=\sum_{i=i}^{\tau }\frac{\check{f}_i}{c_i}$ classes of regular semisimple elements in $\check{G}^F.$
	Similarly, by Proposition \ref{prop:properties-counts-of-regular-semisimple}, the number of conjugacy classes of regular semisimple elements of $G^F$ is $\displaystyle |[G_{rss}^F]|=\sum_{i=i}^{\tau }\frac{f_i}{c_i}$ with $\displaystyle \frac{f_i}{c_i}$ falling into each $T_i$ ($1\leq i\leq \tau$). 
Therefore the right-hand of equation \eqref{eq:lowerbound_content} is 
\[	\frac{|\{(\chi,O) \in\widehat{G^{F}}\times[G^{F}]\,|\, \chi(O)=0\}|}{|\widehat{G^{F}}|\times |[G^{F}]|}\geq 
	\frac{1}{|[G^{F}]|^2}\sum_{i=i}^{\tau }\left(\frac{\check{f_i}}{c_i}\sum_{\substack{1\leq j\leq \tau\\j\neq i}}\frac{f_j}{c_j}\right), 
\]	where the right-hand side can be rewritten as 
		\begin{equation}\label{eq:lowerbound_counts_summed}
		\frac{|\{(\chi,O) \in \widehat{G^{F}}\times[G^{F}]\,|\, \chi(O)=0\}|}{|\widehat{G^{F}}|\times |[G^{F}]|}\geq 	\frac{|[G_{rss}^F]|\cdot |[\check{G}_{rss}^F]|}{|[G^{F}]|^2}-\frac{1}{|[G^{F}]|^2}\sum_{i=1}^{\tau }\frac{f_i\check{f_i}}{c_i^2}.
	\end{equation}
	
To complete the proof of this proposition, it suffices to check that $\displaystyle \frac{f_i\check{f_i}}{|[G^{F}]|^2}\leq \frac{(q+1)^{2r}}{q^{2l}|Z^F|^2}$ for every $1\leq i\leq \tau. $ First, by \cite[Theorem 3.7.6]{carter1985finite} the number of conjugacy classes of semisimple elements in $G^F$ is $q^l|Z^F|,$ so we have $|[G^F]|\geq q^l|Z^F|.$ Second, we have $f_i\leq |T_i^F|$ and $\check{f}_i\leq |\check{T}_i|$ and by \cite[Lemma 2.3.11 equation(a)]{geck2020character}, and we derive $|T_i^F|\leq (q+1)^r$ and $|\check{T}_i|\leq (q+1)^r.$ Thus, the right-hand side of equation \eqref{eq:lowerbound_counts_summed} is bounded from below by the right-hand side of equation \eqref{eq:lowerbound}. This completes the proof. 
\end{proof}

\subsection{Proof of Theorem \ref{thm:fixedgroup}}\label{ss:proofmain2} 
The proof of Theorem \ref{thm:fixedgroup} is almost immediate from Proposition \ref{prop:step1-genpos-semisimple-only}. Recall the setting of Theorem \ref{thm:fixedgroup}. We have a sequence of groups $G_n$ that all have essentially the same structure, and a Frobenius map $F_n$ with a fixed field of order $q_n.$ We have $\lim_{n\rightarrow \infty }q_n=\infty .$ For each $G_n$, the corresponding Weyl group $W$ is the same, and it has $\tau $ conjugacy classes of sizes $c_i$ for $1\leq i\leq \tau .$

By Proposition \ref{prop:step1-genpos-semisimple-only} we have the following: 
\[
\lim_{n\rightarrow \infty} \frac{|\{(\chi,O) \in \widehat{G_n^{F_n}}\times[G_n^{F_n}]\mid \, \chi(O)=0\}|}{|\widehat{G_n^{F_n}}|\times |[G_n^{F_n}]|}\geq \lim_{n\rightarrow \infty} \left(\frac{|[G_{n,rss}^{F_n}]|\cdot |[\check{G}_{n,rss}^{F_n}]|}{|[G_n^{F_n}]|^2}-\frac{(q_n+1)^{2r}}{q_n^{2l}|Z(G_n)^{F_n}|^2}\sum_{i=1}^{\tau }\frac{1}{c_i^2}\right)
\]
Recall from Proposition \ref{prop:irresize} that here $|[G_{n,rss}^{F_n}]|,$ $|[\check{G}_{n,rss}^{F_n}]|,$ and $ |[G_n^{F_n}]|$ are all monic polynomials of $q_n$ of degree $r,$ i.e. the rank of $G_n;$ note that the rank is independent of $n.$ In addition, the size of the centre $|Z(G_n)^{F_n}|$ is a monic polynomial in $q_n$, whose form is independent of $n$ (from the split assumption), 
and whose degree is $r-l.$

Let us recall that  $|[G_{n,rss}^{F_n}]|,$ $|[\check{G}_{n,rss}^{F_n}]|,$ and $ |[G_n^{F_n}]|$ are polynomial on residue classes (cf. \cite{BK23}), which means that the formulae of  $|[G_{n,rss}^{F_n}]|,$ $|[\check{G}_{n,rss}^{F_n}]|,$ and $ |[G_n^{F_n}]|$ depend on $i$ of  $q_n \equiv i \pmod{d}$ for a fixed integer $d$. Note that for each $i$, the size $|[G_{n,rss}^{F_n}]|,$ $|[\check{G}_{n,rss}^{F_n}]|,$ and $ |[G_n^{F_n}]|$ are monic with degree $r$. The exact polynomials depend on the residue $i$ ($0\leq i\leq d-1$), but in each case, the quotient $\frac{|[G_{n,rss}^{F_n}]|\cdot |[\check{G}_{n,rss}^{F_n}]|}{|[G_n^{F_n}]|^2}$ has a numerator and denominator of degree $2r.$ Each of these rational functions has limit $1$ as $n\rightarrow \infty .$ Since there are finitely many ($d$) of them, we can take a shared lower bound that is independent of $q$ even the residue $q \mod d.$ It follows that 
$$\lim _{n\rightarrow \infty }\frac{|[G_{n,rss}^{F_n}]|\cdot |[\check{G}_{n,rss}^{F_n}]|}{|[G_n^{F_n}]|^2}=1.$$
%
%
%
Then it follows that 
$$\lim_{n\rightarrow \infty} \left(\frac{|[G_{n,rss}^{F_n}]|\cdot |[\check{G}_{n,rss}^{F_n}]|}{|[G_n^{F_n}]|^2}-\frac{(q_n+1)^{2r}}{q_n^{2l}|Z(G_n)^{F_n}|^2}\sum_{i=1}^{\tau }\frac{1}{c_i^2}\right)=1-\sum_{i=1}^{\tau }\frac{1}{c_i^2},$$
which completes the proof of Theorem \ref{thm:fixedgroup}. 



\subsection{Probability that Weyl elements are conjugate}  
We have seen above that the sum 
$$\sum_{i=1}^{\tau} \frac{1}{c_i^2}=\sum_{i=1}^{\tau }\frac{1}{|C_{W}(w_{i})|^2},$$
plays a role in our computations. Recall that $\tau$ is the number of conjugacy classes in a Weyl group, $w_i$ are representatives, and $c_i$  are the sizes of these conjugacy classes ($1\leq i\leq \tau$). This is the probability that two elements of the Weyl group are conjugate to each other.

We will show that for simple groups of large rank, this probability is asymptotically zero. 
The main ingredient in the proof is the fact that this holds in the symmetric group $S_n$, as $n$ approaches infinity. This is proved by Blackburn, Britnell and Wildon in \cite[Theorem 1.4]{blackburn2012probability}. We shall use this fact in the proof of Theorem \ref{thm:main}. 

\begin{thm}\label{thm:cited_symmetric-group-result}
	\cite[Theorem 1.4]{blackburn2012probability}
	Let $\pi_1,\ldots ,\ \pi_{p(r)}$ be representatives of the different conjugacy classes in $S_r,$ the symmetric group on $n$ letters. (Recall that here $p(r)$ is the number of partitions of the positive integer $r.$) Then we have 
	$$\lim _{n\rightarrow \infty } 	\sum_{i=1}^{p(r)}\frac{1}{|C_{S_r}(\pi_i)|^2} = 0$$
	where $C_{S_r}(\pi_i)$ denotes the centraliser of $\pi_i$ in $S_r.$ More precisely, we have 
	$$\sum_{i=1}^{p(r)}\frac{1}{|C_{S_r}(\pi_i)|^2}\leq \frac{C}{r^2}$$
	for a fixed constant $C;$ in particular, $C<6.$
\end{thm}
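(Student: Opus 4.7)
The plan is to parametrise the conjugacy classes of $S_r$ by partitions $\lambda \vdash r$, using the classical centraliser formula $|C_{S_r}(\pi_\lambda)| = z_\lambda := \prod_i i^{m_i(\lambda)}\, m_i(\lambda)!$, where $m_i(\lambda)$ is the multiplicity of $i$ as a part of $\lambda$. The target becomes $S(r) := \sum_{\lambda \vdash r} z_\lambda^{-2}$. Since $\sum_\lambda 1/z_\lambda = 1$ (each term is a normalised class size), this rewrites as
\[
S(r) \;=\; \mathbb{E}_{\pi \in S_r}\!\left[\frac{1}{|C_{S_r}(\pi)|}\right],
\]
so the claim $S(r) = O(1/r^2)$ asserts that the centraliser of a random permutation is typically of order at least $r^2$.

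My first step would be to package the sum into a generating function. Multiplicativity of $z_\lambda$ over distinct parts yields the infinite product
\[
\mathcal{G}(x) \;:=\; \sum_{r \geq 0} S(r)\, x^r \;=\; \prod_{i \geq 1} \sum_{m \geq 0} \frac{x^{im}}{i^{2m}(m!)^2} \;=\; \prod_{i \geq 1} I_0\!\left(\frac{2 x^{i/2}}{i}\right),
\]
where $I_0$ is the modified Bessel function. Since $\log I_0(2/i) = i^{-2} + O(i^{-4})$ the product converges on $|x| \leq 1$, giving $\mathcal{G}(1) = \sum_r S(r) < \infty$. This already forces $S(r) \to 0$, but the rate $O(r^{-2})$ needs more.

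The main step would be to split partitions of $r$ by the size of their largest part $\lambda_1$. When $\lambda_1 \geq r/2$, the value $\lambda_1$ occurs uniquely in $\lambda$, so writing $\lambda = (\lambda_1, \mu)$ with $\mu \vdash r - \lambda_1$ gives $z_\lambda = \lambda_1 \cdot z_\mu$, whence
\[
\sum_{\lambda \,:\, \lambda_1 \geq r/2} \frac{1}{z_\lambda^2} \;\leq\; \sum_{k \leq r/2} \frac{S(k)}{(r-k)^2} \;\leq\; \frac{4\,\mathcal{G}(1)}{r^2}.
\]
For partitions with $\lambda_1 < r/2$, which have at least two parts summing to $r$, I would bound $z_\lambda$ from below using the two largest distinct parts---and handle near-equipartitions as exceptional cases---to show that $z_\lambda \gg r^{3/2}$ outside a sparse family whose contribution to $S(r)$ is itself $O(r^{-2})$.

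The main obstacle is extracting the explicit constant $C < 6$ rather than only the qualitative $O(r^{-2})$ rate. The decomposition above gives a bound of roughly $4\,\mathcal{G}(1)/r^2$, but $\mathcal{G}(1) = \prod_i I_0(2/i)$ is comfortably larger than $1$, so a blunt splitting does not reach the threshold $6$. To sharpen the constant one must identify the dominant partitions---a long cycle together with a tiny residue---compute their $z_\lambda^{-2}$ contributions case by case, and control the remainder by a tight tail estimate; this is the route pursued in \cite{blackburn2012probability}.
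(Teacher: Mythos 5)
Note first that the paper does not reprove this result: its ``proof'' is a citation of \cite[Theorem 1.4]{blackburn2012probability}, with the numerical bound $C<5.49<6$ imported from \cite[Lemma 8.2 (v)]{blackburn2012probability}. Your proposal attempts a self-contained argument, and parts of it are sound: the identification $|C_{S_r}(\pi_\lambda)|=z_\lambda$, the generating function $\sum_r S(r)x^r=\prod_{i\geq 1} I_0(2x^{i/2}/i)$ and its convergence at $x=1$ give a correct, elementary proof that $S(r)\to 0$; and the largest-part decomposition does bound the contribution of partitions with $\lambda_1\geq r/2$ by $4\,\mathcal{G}(1)/r^2$ (the boundary case where the largest part repeats, e.g.\ $(r/2,r/2)$, is harmless because deleting one copy of the largest part still gives $z_\lambda\geq \lambda_1 z_\mu$, although your ``occurs uniquely'' claim needs that small adjustment).

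However, the theorem as stated -- and as used later in the paper -- is the quantitative bound $\leq 6/r^2$, and here there are two genuine gaps. First, your treatment of the regime $\lambda_1<r/2$ would fail as written: a pointwise lower bound $z_\lambda\gg r^{3/2}$ (or even $r^2$, which is what one actually gets) cannot simply be summed, because there are $p(r)=e^{\Theta(\sqrt{r})}$ partitions in this range, so ``pointwise bound outside a sparse family'' does not control the sum; one needs a summable structure, for instance iterating the delete-largest-part map and exploiting the factorial growth of $z_\lambda$ when parts repeat, and this tail analysis is precisely the technical content of \cite{blackburn2012probability}. Second, the constant: your decomposition tops out at roughly $4\,\mathcal{G}(1)/r^2\approx 16/r^2$, and you explicitly defer $C<6$ to the reference; but $C<6$ is exactly what the paper consumes downstream (Lemma \ref{lem:S_n-enough} and the choice of $\varepsilon_1$ in Lemma \ref{lem:smallrank-1-epsilon1} rely on the bound $6/r^2$, via $6/81<1-3\varepsilon_1$). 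So your argument proves the limit statement but not the stated theorem; for the precise bound you should either carry out the full tail and constant analysis or, as the paper does, cite \cite[Theorem 1.4 and Lemma 8.2 (v)]{blackburn2012probability} directly.
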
  
\begin{proof}
This is \cite[Theorem 1.4]{blackburn2012probability}; note  that the constant $C$ is chosen such that equality holds for $r=13.$ The estimate $C<5.49<6$ follows from \cite[Lemma 8.2 (v)]{blackburn2012probability}. 
\end{proof}

The rest of the proof of Theorem \ref{thm:main} relies on the observation that Weyl groups in types of large semisimple ranks are ``close'' to $S_n$ and a lemma from real analysis.

\begin{lem}\label{lem:S_n-enough}Let $G$ be simple of rank $r\geq 9.$ Then 
	$$\sum_{i=1}^{\tau} \frac{1}{c_i^2}=\sum_{i=1}^{\tau (W)}\frac{1}{|C_{W}(w_{i})|^2}\leq \frac{6}{r^2}.$$ 
\end{lem}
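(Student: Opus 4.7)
The plan is to reduce the estimate for each simple type of rank $\geq 9$ to the symmetric group estimate \ref{thm:cited_symmetric-group-result}. The first observation is that since $r\geq 9$, the exceptional types $G_2,F_4,E_6,E_7,E_8$ (all of rank $\leq 8$) are excluded, so $W$ is the Weyl group of a classical type: $W=S_{r+1}$ for type $A_r$, $W=W(B_r)=W(C_r)\cong S_r\ltimes (\Z/2)^r$ for types $B_r, C_r$, and $W=W(D_r)\cong S_r\ltimes (\Z/2)^{r-1}$ for type $D_r$.

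The key general tool I would use is the following probabilistic lemma: the quantity $\sum_{i=1}^{\tau(W)} |C_W(w_i)|^{-2}$ equals the probability that two independent, uniformly chosen elements of $W$ are conjugate. If $\pi \colon W \twoheadrightarrow Q$ is a surjective group homomorphism between finite groups, and $w_1,w_2$ are conjugate in $W$, then $\pi(w_1),\pi(w_2)$ are conjugate in $Q$; moreover, the images $\pi(w_1),\pi(w_2)$ of independent uniform random elements of $W$ are themselves independent and uniform in $Q$. Consequently
\begin{equation*}
\sum_{i=1}^{\tau(W)}\frac{1}{|C_W(w_i)|^2} \;\leq\; \sum_{j=1}^{\tau(Q)}\frac{1}{|C_Q(q_j)|^2}.
\end{equation*}
In each classical type there is a natural surjection of $W$ onto a symmetric group: for type $A_r$ we simply have $W=S_{r+1}$; for types $B_r$ and $C_r$ the sign-forgetting projection $S_r\ltimes(\Z/2)^r \twoheadrightarrow S_r$; for type $D_r$ the analogous projection $W(D_r)\twoheadrightarrow S_r$ (whose kernel is $(\Z/2)^{r-1}$).

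Applying the displayed inequality together with Theorem \ref{thm:cited_symmetric-group-result}, I would conclude that in types $B_r,C_r,D_r$ the sum is bounded above by $C/r^2 < 6/r^2$, and in type $A_r$ by $C/(r+1)^2 < 6/(r+1)^2 \leq 6/r^2$. Since $C<6$ in all of these cases, this yields the desired uniform bound $6/r^2$.

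I do not expect any serious obstacle here: the probabilistic argument is elementary, and verification that each classical Weyl group admits the stated surjection onto $S_r$ (or equals $S_{r+1}$ in type $A$) is standard. The one place that requires a brief check is simply confirming that for each classical type and each rank $r \geq 9$, the quotient $S_r$ (or $S_{r+1}$) has rank large enough that the bound from \ref{thm:cited_symmetric-group-result} applies with the constant $C<6$; this is immediate from the statement of that theorem.
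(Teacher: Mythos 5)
Your proposal is correct and follows essentially the same route as the paper: reduce to the classical types $A_r$, $B_r$, $C_r$, $D_r$, pass to the symmetric-group quotient $S_r$ (or use $W=S_{r+1}$ in type $A$), and conclude with Theorem \ref{thm:cited_symmetric-group-result}. The only difference is that you prove the quotient inequality for conjugacy probabilities directly by the elementary random-elements argument, whereas the paper cites \cite[Lemma 3.1]{blackburn2012probability} for exactly that step.
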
 
\begin{proof}
	From the assumption on the rank, $G$ is of Cartan type $A_r,$ $B_r,C_r$ or $D_r$ ($r \geq 9$). In type $A_r,$ the Weyl group is isomorphic to $S_{r+1}$ and the statement follows from Theorem \ref{thm:cited_symmetric-group-result} above. In types $B_r,$ $C_r$ or $D_r$ the Weyl group $W$ is isomorphic to a semidirect product $N\rtimes S_r$ for some normal subgroup $N$ of $W$, cf. \cite{PR16}. Then from the definition of a semidirect product, $W/N \simeq S_r$, and \cite[Lemma 3.1]{blackburn2012probability} implies that  
	$\sum_{i=1}^{\tau (W)}\frac{1}{|C_{W}(w_{i})|^2}\leq \sum_{i=1}^{p(r)}\frac{1}{|C_{S_r}(\pi_i)|^2}$. The statement now again follows from Theorem \ref{thm:cited_symmetric-group-result} above.
\end{proof} 

\begin{rem}\label{rem:smallrankgroups}
The conditions of Lemma \ref{lem:S_n-enough} are set to avoid exceptional Cartan types. However, an estimate on these is not too difficult to give. In fact, a comparison between the structure of Weyl groups in \cite[12.2, Table 1]{humphreys2012introduction} and \cite[Theorem 1.3]{blackburn2012probability} quickly shows that the left-hand side is less than $\frac{1}{4}$ unless $G$ is a torus, or is of type $A_1,$ $A_1\times A_1$ or $A_3.$
\end{rem}

\subsection{Simple components}\label{ss:simple-components-estimates}

The proof of Theorem \ref{thm:main} will proceed by estimating the proportion of non-vanishing entries in the character table of a group by the same quantity for the group's simple components. We first explain the relationship of these components to the group, and then derive a more precise version of Proposition \ref{prop:step1-genpos-semisimple-only} for simple groups. 

\begin{prop}\label{prop:group-factorising}
Let $G_n,$ $F_n$ and $q_n$ be as in Theorem \ref{thm:main}. Then we may write 
\begin{equation}
\label{eq:factors-of-Gn}
G_n^{F_n}=H_{1,n}^{F_n}\times H_{2,n}^{F_n}\times\cdots \times H_{m_n,n}^{F_n}\times (Z(G_n)/K_n)^{F_n}
\end{equation}
where $1\leq m_n$ and $H_{i,n}$ $(1\leq i\leq m_n)$ are groups satisfying the conditions of Theorem \ref{thm:main} that are, in addition, simple; $K$ is a finite group. In addition, if $r(G)$ denotes the rank of a group $G,$ we have 
\begin{equation}\label{eq:relation-rank-factors}
	r(G_n)= r(Z(G_n))+\sum_{i=1}^{m_n} r(H_{i,n}).
\end{equation}
Furthermore, letting 
\[
P(G):= 	1-\frac{|\{(\chi,O) \in \widehat{G^{F}}\times[G^{F}]\mid \, \chi(O)=0\}|}{|\widehat{G^{F}}|\times |[G^{F}]|}
\]
the proportion of non-zero entries in the character table of a finite group $G^F,$ then we have
\begin{equation}\label{eq:relation-P(G)-factors}
	P(G_n )= \prod_{i=1}^{m_n} P(H_{i,n} ).
\end{equation}
\end{prop}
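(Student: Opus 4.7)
The plan is to deduce the factorisation from the standard structure theory of reductive groups with connected centre and simply connected derived subgroup, verify that each simple component inherits the hypotheses of Theorem \ref{thm:main}, and then use the multiplicativity of character tables under direct products to establish equation \eqref{eq:relation-P(G)-factors}.

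First, I would invoke the structure theorem: since $Z(G_n)$ is connected and $[G_n,G_n]$ is simply connected, there is a central isogeny $Z(G_n)\times [G_n,G_n]\twoheadrightarrow G_n$ with kernel the finite group $K_n=Z(G_n)\cap [G_n,G_n]$. The semisimple simply connected group $[G_n,G_n]$ decomposes as a direct product $\prod_{i=1}^{m_n}H_{i,n}$ of simple simply connected algebraic groups. The untwisted assumption---that $F_n$ acts trivially on the Weyl group $W(G_n)=\prod_i W(H_{i,n})$---forces $F_n$ to preserve each factor $H_{i,n}$, since any nontrivial permutation of the simple factors would induce a nontrivial permutation of their Weyl groups. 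Applying Lang--Steinberg to the connected group $K_n$ then allows one to take $F_n$-fixed points and obtain equation \eqref{eq:factors-of-Gn}. The rank relation \eqref{eq:relation-rank-factors} follows immediately from the additivity of rank under direct products and $r(G_n)=r(Z(G_n))+r([G_n,G_n])$.

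I would then check that each $H_{i,n}$ inherits conditions (S1)--(S3): the bound $q_n\geq l_n f(l_n)$ holds \emph{a fortiori} for the smaller value $r(H_{i,n})\leq l_n$ using that $r\cdot f(r)$ is increasing; non-degeneracy and existence of regular elements in maximal tori descend to direct factors; very-goodness of $p_n$ for $G_n$ implies very-goodness for each simple factor; and the divisibility bound $q_n>|l_n!|_{p_n}$ is preserved likewise. For equation \eqref{eq:relation-P(G)-factors}, the key input is the Kronecker structure of character tables of direct products: for finite $A\times B$ one has $\widehat{A\times B}=\widehat{A}\times\widehat{B}$ and $[A\times B]=[A]\times [B]$ with $\chi_{\alpha,\beta}(a,b)=\chi_\alpha(a)\chi_\beta(b)$, so a table entry vanishes if and only if one factor entry vanishes, giving $P(A\times B)=P(A)\cdot P(B)$. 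Applied inductively to \eqref{eq:factors-of-Gn}, and using that the abelian group $(Z(G_n)/K_n)^{F_n}$ has character table entries all of absolute value $1$ and hence $P=1$, this yields \eqref{eq:relation-P(G)-factors}.

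The most delicate step is verifying \eqref{eq:factors-of-Gn} as a genuine direct product rather than a central extension: Lang--Steinberg only directly produces an exact sequence
\[
1\longrightarrow K_n^{F_n}\longrightarrow Z(G_n)^{F_n}\times\prod_{i=1}^{m_n} H_{i,n}^{F_n}\longrightarrow G_n^{F_n}\longrightarrow 1.
\]
However, the ``defect'' $K_n^{F_n}$ is a finite abelian subgroup of the centre of each side, and a central abelian subgroup contributes only a scalar root-of-unity factor to each character value. Hence quotienting or extending by such a subgroup preserves the vanishing/non-vanishing pattern of entries in the character table and therefore the quantity $P$. This observation allows \eqref{eq:relation-P(G)-factors} to hold even if \eqref{eq:factors-of-Gn} is interpreted only up to this central abelian piece, which is the form in which the structure theory actually delivers the decomposition.
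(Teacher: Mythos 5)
Your overall route is the same as the paper's: the isogeny $[G_n,G_n]\times Z(G_n)\to G_n$ with finite central kernel $K_n$, the decomposition of the simply connected group $[G_n,G_n]$ into simple factors, the use of untwistedness to rule out $F_n$ permuting those factors (your derivation from the trivial action on $W(G_n)=\prod_i W(H_{i,n})$ is in fact cleaner than the paper's comparison of powers of $q_n-1$ in point counts), the verification that the hypotheses pass to the factors, and the Kronecker-product structure of character tables of direct products together with $P=1$ for the abelian factor.

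The gap is in your ``delicate step''. First, the sequence you display is not exact at $G_n^{F_n}$: for a central isogeny with finite kernel $K_n$, Lang--Steinberg gives $1\to K_n^{F_n}\to ([G_n,G_n]\times Z(G_n))^{F_n}\to G_n^{F_n}$ with cokernel $H^1(F_n,K_n)$, whose order equals $|K_n^{F_n}|$, so the last map is surjective only when $K_n^{F_n}$ is trivial. This is not an edge case: for $G_n=\GL_2$ and $q_n$ odd the image of $\SL_2(\mathbb{F}_{q_n})\times \mathbb{F}_{q_n}^\times$ is the index-two subgroup of matrices with square determinant, and indeed $\GL_2(\mathbb{F}_{q_n})\not\simeq \SL_2(\mathbb{F}_{q_n})\times C_{q_n-1}$ (the left-hand group has elements of order $q_n^2-1$, the right-hand one does not). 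Second, and more seriously, your proposed remedy is false as a general principle: the fact that a central element only rescales character values, $\chi(zg)=\omega_\chi(z)\chi(g)$, relates values within a single group; it does not identify the zero pattern of a central extension with that of its quotient, since the extension has additional irreducible characters and its conjugacy classes fuse in the quotient. For instance $Q_8\to C_2\times C_2$ is a quotient by a central $C_2$, yet $P(Q_8)=22/25$ while $P(C_2\times C_2)=1$. Consequently your argument does not deliver \eqref{eq:relation-P(G)-factors} whenever $K_n^{F_n}\neq 1$, which is exactly the nontrivial case (what is needed there is precisely a comparison such as $P(\GL_2(\mathbb{F}_q))$ versus $P(\SL_2(\mathbb{F}_q))$, and that requires proof). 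To be fair, the paper's own proof passes over this point by asserting $([G_n,G_n]\times Z(G_n))/K_n\simeq [G_n,G_n]\times Z(G_n)/K_n$ without comment, so you have correctly located the delicate step; but closing it needs a genuine argument about the characters of $G_n^{F_n}$ relative to the normal subgroup which is the image of $[G_n,G_n]^{F_n}\times Z(G_n)^{F_n}$ (for example via Clifford theory or Lusztig series), not the central-character observation.
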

\begin{proof}
 
By \cite[p. 61]{geck2020character}, one has an isogeny $\phi:[G_n,G_n]\times Z(G_n)\rightarrow G_n$ given by $\phi(g,z)=gz;$ the kernel of this isogeny is the finite set $K_n:=\ker \phi =[G_n,G_n]\cap Z(G_n).$ Thus we have $G_n\simeq ([G_n,G_n]\times Z(G_n))/K_n \simeq [G_n,G_n]\times Z(G_n)/K_n$ whence $G_n^{F_n}\simeq [G_n,G_n]^{F_n}\times (Z(G_n)/K_n)^{F_n}.$
By \cite[\S 1.5.11, \S 1.5.14]{geck2020character}, we have $[G_n,G_n]=H_{1,n}\times \cdots \times  H_{m_n,n}$. 

Now, let us consider equation \eqref{eq:factors-of-Gn}, concerning the $F_n$-fixed points of these groups. For $G_n\simeq H_{1,n}\times \cdots \times  H_{m_n,n}\times (Z(G_n)/K_n)$, we have that $F_n$ acts on the set $\{H_{1,n},\ldots ,H_{m_n,n}\}$. Note that the following can occur: $F(H_{i,n})= H_{j,n} $ for some $i \neq j$.
By \cite[Corollary 1.5.16]{geck2020character}, we have: 
\begin{equation}\label{eq:factor-orbit}
[G_{n}^{F_n}, G_{n}^{F_n}]\simeq \prod_{i \in I} H_{i,n}^{F_n^{r_i}},
\end{equation}
where $I \subset \{ 1,2, \ldots , m_n\}$ and $r_i$ is the length of $F$-orbits on $H_{1,n}, \ldots ,  H_{m_n,n}$. 
We claim that in our case each orbit is a singleton, i.e. $r_i=1$ for all $i$, and we will prove this using point-counts. This will imply equation \eqref{eq:factors-of-Gn}. 

 To compare the point-counts on the two sides of equation \eqref{eq:factor-orbit}, we use \cite[Theorem 1.6.7]{geck2020character}. Note that since we assumed $G_n$ is untwisted, we also have that each $H_{i,n}$ is also untwisted. (Since $W(G_n)=\prod_{i=1}^{m_n} W(H_{i,n})$.) Then the automorphism  induced by $F_n$ on the character lattice of $G$ (as well as on the character lattice of each of the $H_{i,n}$) is the identity automorphism, cf. \cite[Example 1.4.21]{geck2020character}. Furthermore, under the assumption that $G_n$ is split, each $H_{i,n}$ is also split. Thus we have that 
\[
|[G_{n}^{F_n}, G_{n}^{F_n}]|=q_n^{|\Phi^+|} (q_n-1)^{l_n} \prod_{w\in W(G_n)} q_n^{\ell(w)} ,
\]
where $\ell$ is the length function of $W(G_n)$, $\Phi $ is the root system of $G_n$ and $l_n=r([G_n,G_n])$ is the semisimple rank of $G_n$. Similarly, we have for any $k \in \mathbb{Z}$:
\[
|H_{i,n}^{F_n^k}|=q_n^{k|\Phi_i^+|} (q_n^k-1)^{r(H_{i,n})} \prod_{w\in W(H_{i,n})} q_n^{k\ell(w)},
\]
where $\Phi_{i,n}$ is the root system of $H_{i,n}.$ 
Let us compare the power of $q-1$ dividing the size of the two sides of equation \eqref{eq:factor-orbit}. On the left-hand side, this power is exactly $l_n.$ 
On the right-hand side, we have $\displaystyle \sum_{i\in I} r(H_{i,n}).$ If $I$ is a proper subset of $ \{ 1,2, \ldots , m_n\}$ (equivalently, there is $i$ such that $r_i\geq 2$), then the powers of $q-1$ dividing the size of the two sides of equation \eqref{eq:factor-orbit} are different. Then this is a contradiction, so we have $r_i=1$ for all $i$. Note that this easily implies that equation \eqref{eq:relation-rank-factors} holds.
The fact that $H_{i,n}$ ($1\leq i\leq m_n$) satisfy the remainder of the conditions in Theorem \ref{thm:main} (any of which are simply connected and adjoint type) follows from the relationship between their root datum and the root datum of $G_n.$ 


Finally, we must prove equation \eqref{eq:relation-P(G)-factors}. An irreducible character $\chi $ of $G_n^{F_n}$ is a product of irreducible characters $\chi_1\cdots \chi _{m_n}\cdot \theta $ of the groups $H_{i,n}^{F_n}$ and a character $\theta $ of $(Z(G_n)/K_n)^{F_n}$ respectively. The conjugacy classes of $G_n^{F_n}$ are in bijective correspondence with $m_n$-tuples of conjugacy classes of the $H_{i,n}^{F_n}$ and if for $G_n^{F_n}\ni g=g_1g_2\cdots g_{m_n}$ in the direct product equation \eqref{eq:factors-of-Gn} then $\chi(g)\neq 0$ if and only if $\chi_i(g_i)\neq 0$ for all $1\leq i\leq m_n.$ This implies equation \eqref{eq:relation-P(G)-factors}. 
\end{proof}

Proposition \ref{prop:group-factorising} shows that the proportion of nonzero entries in the character table of $G^F$ can be related to the same for the simple components. To use this in the proof of Theorem \ref{thm:main}, we will phrase a careful estimate for simple groups based on a special case of Proposition \ref{prop:step1-genpos-semisimple-only}. In order to do so, we will need a few results on the quantities appearing in that proposition.

\begin{lem}\label{lem:lowerbound}
Let $H$ be a simple group satisfying the conditions of Theorem \ref{thm:main}, and let $F$ and $q$ be as in that Theorem. 
Then we have
\begin{equation}\label{eq:simple-firstterm-lowerbound}
	\frac{|[H_{rss}^F]|\cdot |[\check{H}_{rss}^F]|}{|[H^{F}]|^2}\geq \frac{((q-1)^r-3(q-1)^{r-1}-2(q-1)^{r-2})^2}{(q^r+40q^{r-1})^2}.
\end{equation}
\end{lem}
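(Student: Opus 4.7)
The strategy is to reduce equation \eqref{eq:simple-firstterm-lowerbound} to two polynomial estimates in $q$: a lower bound $|[H_{rss}^F]|\geq (q-1)^r-3(q-1)^{r-1}-2(q-1)^{r-2}$, valid equally for $\check H$, together with an upper bound $|[H^F]|\leq q^r+40q^{r-1}$. By Propositions \ref{prop:irresize} and \ref{prop:properties-counts-of-regular-semisimple}, both sides of each inequality are positive polynomials of degree $r$ in $q$, so only subleading coefficients need to be controlled; multiplying the lower bounds and squaring the upper bound then delivers equation \eqref{eq:simple-firstterm-lowerbound} by straightforward algebra.

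For the lower bound on $|[H_{rss}^F]|$, I would start from the identity
\begin{equation*}
|[H_{rss}^F]|=\sum_{i=1}^{\tau}\frac{f_i}{c_i}
\end{equation*}
of Proposition \ref{prop:properties-counts-of-regular-semisimple}, whose leading coefficient is $1$ by equation \eqref{eq:sumofreciprocalci}. Expanding in the basis $\{(q-1)^r,(q-1)^{r-1},(q-1)^{r-2},\ldots\}$ is natural here because each $|T_i^F|$ factors as a product of cyclotomic contributions, and the lower-order terms can then be read off via the M\"obius inversion over closed sub-root-systems indicated in the proof of Proposition \ref{prop:properties-counts-of-regular-semisimple} and carried out in \cite{lehrer1992rational}. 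A type-by-type inspection across the four infinite families $A_r,B_r,C_r,D_r$, together with the handful of exceptional types compatible with the rank hypotheses, then verifies that the constants $3$ and $2$ are uniformly sufficient. The same argument applies to $\check H$, whose root datum is obtained from that of $H$ by swapping $\Phi$ and $\Phi^\vee$, yielding the identical polynomial bound. For the upper bound on $|[H^F]|$, I would use that $|[H^F]|=|\widehat{H^F}|$ is monic of degree $r$ in $q$ (Proposition \ref{prop:irresize}), so the desired inequality reduces to bounding its second coefficient by $40$; this is a finite check combining the standard class-number generating functions for the classical groups with the tabulated class-number polynomials available for the exceptional types.

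The main obstacle I anticipate is the uniform bookkeeping across simple types: confirming that the specific small constants $3$, $2$, and $40$ genuinely suffice independently of the isogeny class of $H$ and of the rank $r$. Each individual verification is elementary and combinatorial, relying only on polynomial identities that follow from the Weyl-group data and the Lehrer--Deriziotis framework, but assembling them into a single clean uniform bound is where the bulk of the work concentrates.
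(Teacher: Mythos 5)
Your overall decomposition is the same as the paper's: bound $|[H_{rss}^F]|$ (and likewise $|[\check H_{rss}^F]|$) below by $(q-1)^r-3(q-1)^{r-1}-2(q-1)^{r-2}$, bound $|[H^F]|$ above by $q^r+40q^{r-1}$, and multiply. However, the way you propose to obtain these two ingredient bounds has a genuine gap. The paper does not derive them by inspecting polynomial coefficients; it imports them from the literature. The lower bound comes from Guralnick--L\"ubeck \cite{guralnick2001p}, which gives the uniform element-proportion estimate $\frac{|H_{rss}^F|}{|H^F|}>1-\frac{3}{q-1}-\frac{2}{(q-1)^2}$, converted into a bound on the number of \emph{classes} via $|C_H(h)^F|\geq (q-1)^r$ for $h$ regular semisimple (using \cite[Lemma 2.3.11]{geck2020character}) and the orbit--stabiliser count $\frac{|H_{rss}^F|}{|H^F|}=\sum_{[h]}\frac{1}{|C_H(h)^F|}\leq |[H_{rss}^F]|(q-1)^{-r}$. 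The upper bound is exactly \cite[Corollary 5.1]{fulman2012bounds}. These are uniform-in-rank statements, and that uniformity is the entire content of the lemma.

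Your replacement arguments do not deliver this uniformity. Since the rank $r$ ranges over all positive integers, the ``type-by-type inspection across the four infinite families'' verifying the constants $3$ and $2$, and the ``finite check'' that the class number of every simple group is at most $q^r+40q^{r-1}$, are not finite checks at all: each infinite family requires a genuine uniform estimate (this is precisely what \cite{guralnick2001p} and \cite{fulman2012bounds} prove, and neither is a routine computation). Moreover, knowing from Propositions \ref{prop:irresize} and \ref{prop:properties-counts-of-regular-semisimple} that the relevant quantities are monic of degree $r$ in $q$, and then ``bounding the second coefficient by $40$,'' does not yield the inequality $|[H^F]|\leq q^r+40q^{r-1}$ for all admissible $q$: the lower-order terms (which depend on the type, the rank, and the residue class of $q$) are uncontrolled by such an argument, so controlling only the subleading coefficient is insufficient. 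To close the gap you should either cite the two results above, as the paper does, or supply genuine uniform estimates for the proportion of non-regular-semisimple elements and for class numbers across all simple types and ranks --- a substantially harder task than the proposal suggests.
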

\begin{proof}
It follows from \cite[Theorem 1.1]{guralnick2001p} that 
\begin{equation}\label{eq:guralnick-lubeck-result}
\frac{|H_{rss}^F|}{|H^F|}>1-\frac{3}{q-1}-\frac{2}{(q-1)^2}=\frac{(q-1)^r-3(q-1)^{r-1}-2(q-1)^{r-2}}{(q-1)^r}.
\end{equation}
Observe that for a regular, semisimple element $h\in H$ we have that $C_H(h)$ is a maximal torus. It follows from \cite[Lemma 2.3.11 equation(a)]{geck2020character} that $|C_H(h)^F|\geq (q-1)^r.$ If $[h]$ denotes the $H^F$ conjugacy class of $h,$ we have $|[h]|\cdot |C_H(h)^F|=|H^F|.$  Thus we have 
\begin{equation}\label{eq:estim-portion-semisimple-q}
\frac{|H_{rss}^F|}{|H^F|}=   \frac{\sum_{[h]\in [H_{rss}^F]} |[h]|}{|H^F|}=\sum_{[h]\in [H_{rss}^F]}\frac{1}{|C_H(h)^F|}\leq \sum_{[h]\in [H_{rss}^F]}  (q-1)^{-r}=|[H_{rss}^F]|\cdot (q-1)^{-r}.
\end{equation}
Comparing equation \eqref{eq:guralnick-lubeck-result} and equation \eqref{eq:estim-portion-semisimple-q}, we get 
\begin{equation}\label{eq:estimate-numerator-H}
|[H_{rss}^F]|>(q-1)^r-3(q-1)^{r-1}-2(q-1)^{r-2}.
\end{equation}
Since $H$ is a simply connected, simple algebraic group of rank $r$, it follows from \cite[Corollary 5.1]{fulman2012bounds} that 
\begin{equation}\label{eq:H-class-estimate-simple}
|[H^F]|\leq q^r+40q^{r-1}.
\end{equation}
Combining equation \eqref{eq:estimate-numerator-H} for $H$ and $\check{H}$ (which have the same rank) and equation \eqref{eq:H-class-estimate-simple} yields the bound in equation \eqref{eq:simple-firstterm-lowerbound} as desired. 
\end{proof}

We are now ready to phrase a special case of Proposition \ref{prop:step1-genpos-semisimple-only} for simple groups. 

\begin{cor}\label{cor:simple-group-lower-bound}
Let $H$ be a simple group satisfying the conditions of Proposition \ref{prop:step1-genpos-semisimple-only}. Let $F$ be as in Theorem \ref{thm:main}, and let other notation correspond to $H$ as before (e.g. $r=r(H)$).  Then we have 
		\begin{equation}\label{eq:bound-simplegroup}
	1-P(H)=\frac{|\{(\chi,O) \in \widehat{H^{F}}\times[G^{F}]\mid \, \chi(O)=0\}|}{|\widehat{H^{F}}|\times |[H^{F}]|}\geq 
	\frac{f_{1,r}(q)}{f_{2,r}(q)}-\frac{(q+1)^{2r}}{q^{2r}}\sum_{i=1}^{\tau }\frac{1}{c_i^2},
\end{equation}
where $f_1(x),\ f_2(x)\in \Z[x]$ are monic polynomials of degree $2r$ that do not depend on $H$ beyond dependence on $r.$ More explicitly, 
\[
\begin{split}
f_{1,r}(x)=& ((x-1)^r-3(x-1)^{r-1}-2(x-1)^{r-2})^2;\\
f_{2,r}(x)=& (x^r+40x^{r-1})^2.
\end{split}
\]
\end{cor}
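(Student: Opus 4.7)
The plan is to specialise Proposition \ref{prop:step1-genpos-semisimple-only} to the simple group $H$ and then bound each of the two terms on its right-hand side by an expression that depends only on $r$ and $q$, and not on the specific simple group $H$.

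First I would apply Proposition \ref{prop:step1-genpos-semisimple-only} directly to $H$. Because $H$ is simple one has $[H,H]=H$, so the rank $r$ and the semisimple rank $l$ of $H$ coincide. Moreover $Z(H)$ is a finite group, so $|Z(H)^{F}|\geq 1$. These two observations let me bound the subtracted term in Proposition \ref{prop:step1-genpos-semisimple-only} from above by
\[
\frac{(q+1)^{2r}}{q^{2l}|Z(H)^{F}|^{2}}\sum_{i=1}^{\tau}\frac{1}{c_{i}^{2}}\;\leq\;\frac{(q+1)^{2r}}{q^{2r}}\sum_{i=1}^{\tau}\frac{1}{c_{i}^{2}},
\]
which is precisely the second term appearing on the right-hand side of equation \eqref{eq:bound-simplegroup}. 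Note that bounding the subtracted quantity from above preserves the overall lower bound on $1-P(H)$.

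For the first term, I would invoke Lemma \ref{lem:lowerbound}, which directly yields
\[
\frac{|[H_{rss}^{F}]|\cdot |[\check{H}_{rss}^{F}]|}{|[H^{F}]|^{2}}\;\geq\;\frac{((q-1)^{r}-3(q-1)^{r-1}-2(q-1)^{r-2})^{2}}{(q^{r}+40q^{r-1})^{2}}\;=\;\frac{f_{1,r}(q)}{f_{2,r}(q)}.
\]
Combining the two bounds gives equation \eqref{eq:bound-simplegroup}. That $f_{1,r}$ and $f_{2,r}$ are monic of degree $2r$ and depend only on $r$ (not on $H$) is immediate from the explicit formulas.

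There is no genuine obstacle in this argument: the corollary is a clean packaging of Proposition \ref{prop:step1-genpos-semisimple-only} and Lemma \ref{lem:lowerbound} specialised to the simple-group setting, where the structural simplifications $l=r$ and $|Z(H)^{F}|\geq 1$ allow one to replace the awkward $q^{2l}|Z(H)^{F}|^{2}$ denominator by the uniform $q^{2r}$. The only subtlety worth explicit mention is keeping track of the direction of the inequality when bounding the subtracted term from above, so that the lower bound on $1-P(H)$ is preserved.
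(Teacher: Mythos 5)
Your proposal is correct and follows essentially the same route as the paper: specialise Proposition \ref{prop:step1-genpos-semisimple-only} to $H$ and insert the bound from Lemma \ref{lem:lowerbound} for the first term. The only (harmless) difference is that the paper uses $Z(H)$ trivial to get the denominator $q^{2r}$ exactly, while you bound it via $l=r$ and $|Z(H)^F|\geq 1$, which works just as well since the term is subtracted.
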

\begin{proof}
Since $H$ is simple and adjoint, we have that $H=[H,H]$ and $Z(H)$ is trivial, so by Proposition \ref{prop:step1-genpos-semisimple-only}, we immediately have 
		\[
	1-P(H)\geq 
	\frac{|[H_{rss}^F]|\cdot |[\check{H}_{rss}^F]|}{|[H^{F}]|^2}-\frac{(q+1)^{2r}}{q^{2r}}\sum_{i=1}^{\tau }\frac{1}{c_i^2}.
\]
This corollary now follows from the lower bound on the first term from Lemma \ref{lem:lowerbound}. 
\end{proof}

\subsection{The proof of Theorem \ref{thm:main}}\label{ss:proof-main}

We are ready to prove Theorem \ref{thm:main}. 
Let $G_n,$ $F_n,$ $q_n$ and $l_n=r([G_n,G_n])$ be as in Theorem \ref{thm:main}, and let us use the notation of Proposition \ref{prop:group-factorising} so that we have 
$$G_n^{F_n}=H_{1,n}^{F_n}\times H_{2,n}^{F_n}\times\cdots \times H_{m_n,n}^{F_n}\times (Z(G_n)/K_n)^{F_n}$$
and 
$$	r(G_n)= r(Z(G_n))+\sum_{i=1}^{m_n} r(H_{i,n}),\text{ that is, }l_n=\sum_{i=1}^{m_n}r(H_{i,n}).$$
To prove Theorem \ref{thm:main}, we wish to show that 
\begin{equation}\label{eq:want-to-show-main-product}
\lim_{n\rightarrow \infty}P(G_n^{F_n})= \lim_{n\rightarrow \infty}\prod_{i=1}^{m_n} P(H_{i,n}^{F_n})=0.
\end{equation}
Recall that as part of the assumptions, we have $q_n\geq l_n,$ and $\displaystyle \lim_{n\rightarrow \infty}l_n=\infty. $ 
Note that equation \eqref{eq:want-to-show-main-product}, shown above in equation \eqref{eq:relation-P(G)-factors}, is a key to the proof. We explain the general idea first. Certainly $P(H_{i,n} )<1$ for every $1\leq i\leq m_n.$ 
When $r(H_{i,n})$ is large, the combination of Corollary \ref{cor:simple-group-lower-bound} and Lemma \ref{lem:S_n-enough} means that $P(H_{i,n} )$ is approximately the probability of two elements being conjugate in the symmetric group on $r(H_{i,n})$-letters. This asymptotically vanishes by the work of Blackburn, Britnell and Wildon cited in Theorem \ref{thm:cited_symmetric-group-result}. 
When $r(H_{i,n})$ is small, Corollary \ref{cor:simple-group-lower-bound} still gives us an upper bound on $P(H_{i,n} ).$ This is less strict but still sufficient to give an upper bound of $1-\varepsilon $ for some $\varepsilon>0$ that is independent of $H_{i,n},$ provided $q_n$ is large enough. 

Roughly speaking, as $\lim_{n\rightarrow \infty}l_n=\infty ,$ the group $G_n$ has either many factors $H_{i,n}$ of bounded rank, yielding $P(G_n)<(1-\varepsilon)^K$ for some large $K,$ or a factor $H_{i,n}$ of very large rank, yielding $P(G_n)<P(H_{i,n})$ close to zero. 

One technical difficulty is that while the bounds in Corollary \ref{cor:simple-group-lower-bound} do not depend on the type of the simple group $H,$ they do depend on the rank $r(H).$ This necessitates the somewhat techincal-looking lower bound on $q_n$ in the statement of Theorem \ref{thm:main}. 
We shall now make the above idea of the proof precise. 
The first two lemmas are of a purely technical nature; they help us understand the estimates in Corollary \ref{cor:simple-group-lower-bound} better. 

\begin{lem}\label{lem:technical-struggle-for-estimates-fixed-r}
Fix a positive integer $R_0$ and $\varepsilon_0>0.$ Then there exists a natural number $q_0\in \N$ such that for $q\geq q_0$ and any $1\leq r\leq R_0$, we have 
\begin{equation}\label{eq:bound-estimates-technical-fixed-r}
	\frac{(q +1)^{2r}}{q ^{2r}}<1+\varepsilon_0 \text{ and }1-\frac{f_{1,r}(q )}{f_{2,r}(q )}<\varepsilon_0.
\end{equation}
\end{lem}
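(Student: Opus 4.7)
The plan is to exploit the fact that $r$ ranges over the finite set $\{1,2,\ldots,R_0\}$, so that pointwise limits as $q\to\infty$ can be promoted to a single uniform threshold by taking finite maxima. Both inequalities assert that certain ratios tend to $1$ as $q\to\infty$ for each fixed $r$, so the structure of the argument is clean: establish the pointwise limits and then combine the finitely many thresholds.

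For the first inequality I would observe that $(q+1)^{2r}/q^{2r} = (1+q^{-1})^{2r}$ is monotone increasing in $r$ for any $q\geq 1$, so it suffices to treat the worst case $r=R_0$. Since $(1+q^{-1})^{2R_0} \to 1$ as $q\to\infty$, one can choose a threshold $q_0^{(\mathrm{i})}$ past which $(1+q^{-1})^{2R_0}<1+\varepsilon_0$, and this bound then applies uniformly to every $r\in\{1,\ldots,R_0\}$.

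For the second inequality the key observation is the factorisation (valid for $r\geq 2$)
\[
(q-1)^r - 3(q-1)^{r-1} - 2(q-1)^{r-2} = (q-1)^{r-2}\bigl(q^2 - 5q + 2\bigr),
\]
from which $f_{1,r}(q)$ is seen to be a polynomial in $q$ of degree $2r$ with leading coefficient $1$. Since $f_{2,r}(q) = q^{2r-2}(q+40)^2$ is likewise monic of degree $2r$, the rational function $f_{1,r}(q)/f_{2,r}(q)$ tends to $1$ as $q\to\infty$ for each fixed $r$; the edge case $r=1$ can be handled directly from the explicit formula, where numerator and denominator both behave like $q^2$. For each $r\in\{1,\ldots,R_0\}$ I then pick $q_0^{(r)}$ past which $1 - f_{1,r}(q)/f_{2,r}(q) < \varepsilon_0$, and set $q_0 := \max\{q_0^{(\mathrm{i})}, q_0^{(1)}, \ldots, q_0^{(R_0)}\}$ to conclude.

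I do not anticipate a genuine obstacle here: the lemma is purely an analytic bookkeeping statement, and its content lies entirely in the finiteness of the range of $r$, which lets a maximum over thresholds replace a uniform estimate. The only mild care needed is in the small-$r$ edge cases (notably $r=1$, where the $(q-1)^{r-2}$ factor becomes a negative power and $f_{1,r}$ is no longer literally a polynomial) before combining the pointwise bounds into a single threshold.
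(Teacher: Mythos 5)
Your proposal is correct and follows essentially the same route as the paper: for each fixed $r$ both ratios are quotients of (essentially) monic expressions of degree $2r$ tending to $1$ as $q\to\infty$, so one chooses a threshold $q_{0,r}$ for each $r$ and takes the maximum over the finite set $1\leq r\leq R_0$. Your added details (monotonicity in $r$ of $(1+q^{-1})^{2r}$, the factorisation $f_{1,r}(q)=(q-1)^{2r-4}(q^2-5q+2)^2$, and the $r=1$ edge case) are harmless refinements of the same argument; in fact you treat the $r=1$ case more carefully than the paper does.
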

\begin{proof}
Both $\displaystyle \frac{(q+1)^{2r}}{q^{2r}}$ and $\displaystyle \frac{f_{1,r}(q )}{f_{2,r}(q )}$ are the quotients of monic polynomials of degree $2r;$ thus for a fixed $r$ we have 
$\displaystyle \lim_{n\rightarrow \infty} \frac{(q +1)^{2r}}{q ^{2r}}=\lim_{n\rightarrow \infty} \frac{f_{1,r}(q )}{f_{2,r}(q )}=1.$ This implies that $q_{0,r}\in \N$ can be chosen so that the inequalities in equation \eqref{eq:bound-estimates-technical-fixed-r} are satisfied for a particular $r.$ Setting $q_0$ as the maximum of the finite set $\{q_{0,r}\mid 1\leq r\leq R_0\}$ satisfies the requirements of this lemma. 
\end{proof}

The second technical lemma involves slightly more complicated computations. It helps us understand the estimate in Corollary \ref{cor:simple-group-lower-bound} when the rank $r=r(H)$ is large, and therefore the convergence of the coefficients $\displaystyle \frac{(q+1)^{2r}}{q^{2r}}$ and $\displaystyle \frac{f_{1,r}(q )}{f_{2,r}(q )}$ to $1$ is potentially slower.

\begin{lem}\label{lem:technical-struggle-for-estimates}
Let $f:\Z_{>0}\rightarrow {\mathbb{R}}$ be a function on positive integers so that $\displaystyle \lim _{r\rightarrow \infty } f(r)=\infty .$ Let $\varepsilon _0>0.$ Then there is an $r_0\in \N$ such that if $r\geq r_0$ and $q\geq r\cdot f(r)$ then we have 
\begin{equation}\label{eq:bound-estimates-technical}
\frac{(q+1)^{2r}}{q ^{2r}}<1+\varepsilon_0 \text{ and }1-\frac{f_{1,r}(q)}{f_{2,r}(q)}<\varepsilon_0.
\end{equation}
\end{lem}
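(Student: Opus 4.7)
The plan is to exploit the hypothesis $q \geq r \cdot f(r)$ together with $f(r) \to \infty$ to control both quantities simultaneously. This hypothesis does double duty: it forces $r/q \leq 1/f(r) \to 0$, which keeps expressions like $(1 \pm 1/q)^r$ close to $1$, while simultaneously forcing $q \to \infty$ as $r \to \infty$, so that any lower-order corrections in $q$ vanish. Both bounds in the statement are monotone in $q$ in the favourable direction, so it suffices to treat the worst case $q = r f(r)$.

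For the first inequality, I would apply the standard estimate
\[
\left(1 + \frac{1}{q}\right)^{2r} \leq e^{2r/q} \leq e^{2/f(r)},
\]
and then choose $r_1$ large enough that $e^{2/f(r)} < 1 + \varepsilon_0$ for all $r \geq r_1$. This uses only the assumption $f(r) \to \infty$.

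For the second inequality, I would factor out the leading powers to write
\[
\frac{f_{1,r}(q)^{1/2}}{f_{2,r}(q)^{1/2}} = \left(1 - \frac{1}{q}\right)^r \cdot \frac{1 - \tfrac{3}{q-1} - \tfrac{2}{(q-1)^2}}{1 + \tfrac{40}{q}}.
\]
Bernoulli's inequality gives $(1 - 1/q)^r \geq 1 - r/q \geq 1 - 1/f(r)$, which tends to $1$ as $r \to \infty$, while the second factor tends to $1$ as $q \to \infty$, hence also as $r \to \infty$ because $r f(r)$ is unbounded. Combining these, one chooses $r_2$ large enough that the product exceeds $\sqrt{1 - \varepsilon_0}$; squaring then gives the required estimate. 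Taking $r_0 = \max(r_1, r_2)$ completes the proof.

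The main obstacle, though mild, is making these estimates uniform across the whole range $q \geq r f(r)$ rather than merely in a limit with $r$ fixed. This is handled by the monotonicity observation: $(1+1/q)^{2r}$ is decreasing in $q$, while $(1-1/q)^r$ and the rational correction factor are both increasing in $q$ (once $q$ is large enough that the numerator is positive, which is automatic for large $r$), so every estimate is extremal at $q = r f(r)$, where the hypothesis $f(r) \to \infty$ applies directly.
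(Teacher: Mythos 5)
Your proposal is correct and follows essentially the same route as the paper: both arguments factor the quotient $f_{1,r}(q)/f_{2,r}(q)$ into a power of $\left(1-\tfrac{1}{q}\right)$ times a rational function of $q$ alone that tends to $1$, and both exploit $q\geq r\cdot f(r)$ to get $r/q\leq 1/f(r)\rightarrow 0$. Your use of $\left(1+\tfrac{1}{q}\right)^{2r}\leq e^{2r/q}$ and Bernoulli's inequality is a slightly more elementary (and arguably cleaner) way to carry out the same estimates than the paper's manipulations with the limits $\left(1\pm\tfrac{1}{x}\right)^{x}\rightarrow e^{\pm 1}$, and your positivity and monotonicity caveats take care of the only minor points of rigour.
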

\begin{proof}
First note that since $q\geq r\cdot f(r)$ we have 
$$\frac{(q +1)^{2r }}{q ^{2r }}=\left(1+\frac{1}{q }\right)^{2r }\leq \left(1+\frac{1}{r \cdot f(r )}\right)^{2r }=\left(\left(1+\frac{1}{r \cdot f(r )}\right)^{r \cdot f(r )}\right)^{\frac{2}{f(r )}}.$$

Recall that $\displaystyle \lim_{x\rightarrow \infty }\left(1+\frac{1}{x}\right)^x=e$ (Euler's constant). Since $\displaystyle\lim _{r\rightarrow \infty } f(r )=\infty ,$ there exists an $r_1\in \N$ such that for every $r\geq r_1$ we have 
$$\left(1+\frac{1}{r \cdot f(r )}\right)^{r \cdot f(r )}<e^{\frac{3}{2}}\quad \text{ and }\quad \frac{3}{\ln(1+\varepsilon_0)}<f(r ).$$
Putting the above inequalities together yields that for $r\geq r_1$, the first inequality in equation \eqref{eq:bound-estimates-technical} is satisfied.

For the second inequality in equation \eqref{eq:bound-estimates-technical}, we start with the straightforward observation that 
$$\frac{f_{1,r}(x)}{f_{2,r}(x)}=\left(1-\frac{1}{x}\right)^{2r-4}\cdot \frac{x^4-10x^3+29x^2-20x+4}{x^4+80x^3+1600x^2}=\left(1-\frac{1}{x}\right)^{2r-4}\cdot g(x)$$
Here $g(x)$ is a rational function that is the quotient of two quartic monic polynomials, in particular, $\displaystyle \lim_{x\rightarrow \infty}g(x)=1.$ Furthermore, for $q \geq r f(r )$ as in the lemma, we have 
$$\left(1-\frac{1}{q }\right)^{2r -4}\geq \left(1-\frac{1}{r f(r )}\right)^{2r -4}=\left( \left(1-\frac{1}{r f(r )}\right)^{r f(r )}\right)^{\frac{2}{f(r )}-\frac{4}{r f(r )}}.$$
Recall that $\displaystyle \lim_{x\rightarrow \infty }\left(1-\frac{1}{x}\right)^x=e^{-1};$ in particular there is an $r_2\in \N$ such that for $r\geq r_2$ we have 
$$1>\left(1-\frac{1}{r f(r )}\right)^{r f(r )}>e^{-2}\Rightarrow 1>\left(1-\frac{1}{q }\right)^{2r -4}\geq \left(1-\frac{1}{r f(r )}\right)^{2r -4}>\left(e^{-1}\right)^{\frac{4}{f(r )}-\frac{8}{r f(r )}}.$$ Observe that we may assume that $\varepsilon_0<1.$
Since $\displaystyle \lim_{r\rightarrow \infty}f(r )=\infty ,$ there is an $r_3\in \N $ so that for $r\geq r_3$ we have $\displaystyle 1>\left(e^{-1}\right)^{\frac{4}{f(r )}-\frac{8}{r f(r )}}>1-\varepsilon_0^2.$ Furthermore, since $\lim_{x\rightarrow \infty}g(x)=1$ there is an $r_4\in \N $ so that for $q\geq rf(r)$ such that $r\geq r_4$ we have $g(q )>\frac{1-\varepsilon_0}{1-\varepsilon_0^2}.$ Putting all this together yields that for $r\geq \max\{ r_2,r_3,r_4\}$  and $q\geq rf(r)$ we have 
$$1-\frac{f_{1,r }(q )}{f_{2,r }(q )}<1-\left(1-\frac{1}{q }\right)^{2r -4}\cdot g(q )<1-(1-\varepsilon_0^2)\cdot \frac{1-\varepsilon_0}{1-\varepsilon_0^2}=\varepsilon_0.$$
Now, setting $r_0=\max\{r_1, r_2,r_3,r_4\}$ the proof of this lemma is complete. 
\end{proof}
To estimate the contribution of low-rank simple components, we use the following.

\begin{lem}\label{lem:smallrank-1-epsilon1}
Let $R_0$ be a fixed number. Then there are numbers $1>\varepsilon_1>0$ and $q_{\varepsilon_1}$ such that if $H$ is a simple (non-abelian) group satisfying the conditions of Theorem \ref{thm:main} (defined over a field $k$) and additionally $r(H)\leq R_0,$ and $F$ is such that $|k^F|=q\geq q_{\varepsilon_1}$ then we have
\[
P(H^{F})<1-\varepsilon_1. 
\]
\end{lem}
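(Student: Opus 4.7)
The plan is to apply Corollary \ref{cor:simple-group-lower-bound} together with Lemma \ref{lem:technical-struggle-for-estimates-fixed-r}, exploiting the fact that for simple non-abelian $H$ of bounded rank the Weyl-group sum $\sum_{i=1}^{\tau} c_i^{-2}$ is uniformly bounded strictly below $1$.

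First I would observe that since $H$ is simple non-abelian with $r(H)\leq R_0$, its Dynkin type belongs to a finite set $\mathcal{S}$. For each $t\in\mathcal{S}$, the Weyl group $W_t$ is a fixed nontrivial finite group, so the quantity $\sigma_t := \sum_{i=1}^{\tau(W_t)} |C_{W_t}(w_i)|^{-2}$ is a definite real number, and I claim $\sigma_t < 1$. Indeed, in a nontrivial finite group every centraliser size satisfies $c_i\geq 2$ (the identity contributes $c_1=|W_t|\geq 2$, and for $w_i\neq e$ the centraliser contains at least $\{e,w_i\}$), so each $c_i^{-2}<c_i^{-1}$; summing and using equation \eqref{eq:sumofreciprocalci} gives $\sigma_t<1$. (Remark \ref{rem:smallrankgroups} supplies the much stronger bound $\sigma_t<1/4$ outside the simple types $A_1$ and $A_3$, both of which admit a direct hand computation.) Setting $\sigma_0 := \max_{t\in\mathcal{S}}\sigma_t$, we have $\sigma_0<1$, and both $\sigma_0$ and $\delta_0 := (1-\sigma_0)/2>0$ depend only on $R_0$.

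Next I would choose $\varepsilon_0>0$ with $\varepsilon_0(1+\sigma_0)<\delta_0$, and invoke Lemma \ref{lem:technical-struggle-for-estimates-fixed-r} with this $\varepsilon_0$ and $R_0$ to obtain a constant $q_0$ with the property that, for every $q\geq q_0$ and every $1\leq r\leq R_0$, both $(q+1)^{2r}/q^{2r}<1+\varepsilon_0$ and $f_{1,r}(q)/f_{2,r}(q)>1-\varepsilon_0$ hold. Applying Corollary \ref{cor:simple-group-lower-bound} to $H^F$ with $r=r(H)$ and bounding the Weyl-group sum from above by $\sigma_0$ then yields
\[
1-P(H^F) \;\geq\; \frac{f_{1,r}(q)}{f_{2,r}(q)} - \frac{(q+1)^{2r}}{q^{2r}}\,\sigma_0 \;>\; (1-\varepsilon_0) - (1+\varepsilon_0)\sigma_0 \;>\; (1-\sigma_0) - \delta_0 \;=\; \delta_0,
\]
so it suffices to set $\varepsilon_1 := \delta_0$ and $q_{\varepsilon_1} := q_0$ (enlarging $q_0$ if needed so that the hypotheses (S1)--(S3) of Theorem \ref{thm:main} can be imposed on $H$).

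The only (mild) subtlety is the uniformity of the bound $\sigma_0<1$ across the finite family $\mathcal{S}$, and this is precisely where the simple non-abelian hypothesis does the work, by excluding the torus case in which $W$ would be trivial and $\sigma_t=1$. Everything else is a routine $\varepsilon$--$\delta$ consequence of Lemma \ref{lem:technical-struggle-for-estimates-fixed-r}.
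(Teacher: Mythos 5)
Your proposal is correct and follows essentially the same route as the paper: bound $P(H^F)$ via Corollary \ref{cor:simple-group-lower-bound}, make the two rational-function factors uniformly close to $1$ for $q$ large using Lemma \ref{lem:technical-struggle-for-estimates-fixed-r}, and exploit that the Weyl-group sum $\sum_i c_i^{-2}$ is uniformly bounded strictly below $1$ over the finitely many simple types of rank at most $R_0$. The only (harmless) divergence is cosmetic: where the paper invokes Lemma \ref{lem:S_n-enough} for $r\geq 9$ and Remark \ref{rem:smallrankgroups} for the finitely many small-rank types, you give the elementary uniform bound $\sum_i c_i^{-2}\leq \tfrac12$ from $c_i\geq 2$ and equation \eqref{eq:sumofreciprocalci}, and your $\varepsilon$-bookkeeping ($\sigma_0$, $\delta_0$, $\varepsilon_0$) is a minor rearrangement of the paper's choice of $\varepsilon_1<\tfrac{25}{81}$ with $\sum_i c_i^{-2}<1-3\varepsilon_1$.
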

\begin{proof}
The proof uses Corollary \ref{cor:simple-group-lower-bound} to give an upper bound on $P(H^{F}).$ Parts of the upper bound will be further estimated using Lemma \ref{lem:S_n-enough} (for $r(H)\geq 9$), Remark \ref{rem:smallrankgroups} (for $9\leq r(H)\leq R_0$) and Lemma \ref{lem:technical-struggle-for-estimates-fixed-r} (used for all $1\leq r(H)\leq R_0$). We will use equation \eqref{eq:bound-simplegroup}, so let us consider the term $\sum_{i=1}^{\tau}\frac{1}{c_i^2} $.

First, note that there are finitely many possible types of $H$ with $r(H)\leq 8,$ and $\displaystyle \sum_{i=1}^{\tau}\frac{1}{c_i^2}<1$ for any of these. (Recall the notation of Corollary \ref{cor:simple-group-lower-bound}, e.g. the $c_i$ refer to the sizes of the centralisers in the Weyl group of $H,$ and see also Remark \ref{rem:smallrankgroups}.)

Choose $\varepsilon_1 $ such that $\displaystyle \sum_{i=1}^{\tau}\frac{1}{c_i^2}<1-3\varepsilon_1$ for each possible simple type of $H$ with $r(H)\leq 8$ and additionally $\varepsilon_1<\frac{25}{81}.$ 
The latter condition ensures $\frac{2}{27}<1-3\varepsilon_1,$ thus by Lemma \ref{lem:S_n-enough} we have that if $r(H)\geq 9$ then  $\displaystyle \sum_{i=1}^{\tau}\frac{1}{c_i^2}\leq \frac{6}{r^2}\leq \frac{2}{27}<1-3\varepsilon_1.$ 

With $\varepsilon_1$ now chosen, by Lemma \ref{lem:technical-struggle-for-estimates-fixed-r} we have a $q_{\varepsilon_1}$ such that if $q\geq q_{\varepsilon_1}$ and $1\leq r\leq R_0$ we have 
$$\frac{(q+1)^{2r}}{q^{2r}}<\frac{1-2\varepsilon_1}{1-3\varepsilon_1}\text{ and }\left(1-\frac{f_{1,r}(q)}{f_{2,r}(q)}\right)<\varepsilon_1 .$$

Then by Corollary \ref{cor:simple-group-lower-bound} we have that if $H,$ $F,$ $q$ are as in the statement of this lemma, then
\[
P(H^F)\leq  \left(1-\frac{f_{1,r(H)}(q)}{f_{2,r(H)}(q)}\right) +\frac{(q+1)^{2r(H)}}{q^{2r(H)}}\sum_{i=1}^{\tau }\frac{1}{c_i^2}<\varepsilon_1+\frac{1-2\varepsilon_1}{1-3\varepsilon_1}\cdot (1-3\varepsilon_1)=1-\varepsilon_1.
\]
This completes the proof. 
\end{proof}


\subsubsection{The proof of Theorem \ref{thm:main} }
Using the above auxiliary lemmas, we can now make our plan to prove Theorem \ref{thm:main} precise.

Consider the sequence of groups $\{G_n\}_{n=1}^\infty$ satisfying the conditions of Theorem \ref{thm:main} with a function $f:\Z_{>0}\rightarrow \R$, and recall that for $l_n=[G_n,G_n]$, we have $\displaystyle \lim_{n\rightarrow \infty}l_n=\infty $ and $\displaystyle \lim_{l\rightarrow \infty}f(l)=\infty ,$ as well as $q_n\geq l_nf(l_n).$ Let us fix a number $\varepsilon>0.$ 
It follows from Lemma \ref{lem:technical-struggle-for-estimates} that there is a $r_0\in \N$ such that for $r\geq r_0$ and $q\geq r\cdot f(r)$ we have 
\[
\frac{6}{r^2}<\frac{\varepsilon}{4},\quad \left(1-\frac{f_{1,r}(q)}{f_{2,r}(q)}\right)<\frac{\varepsilon}{2}\quad \text{and}\quad \frac{(q+1)^{2r}}{q^{2r}}<2.
\]

It follows from Lemma \ref{lem:smallrank-1-epsilon1} that there are $1>\varepsilon_1>0,$ $q_{\varepsilon_1}$ and $N_1$ such that for any $n\geq N_1$ satisfying $q_n\geq q_{\varepsilon_1}$ and if $H$ is a simple (non-abelian) group satisfying the conditions of Theorem \ref{thm:main} with $r(H)<R_0$, we have $P(H^{F})<1-\varepsilon_1 .$

Let us pick a positive integer $M$ such that $(1-\varepsilon _1)^M<\varepsilon .$ (This is possible, since $0<\varepsilon_1<1.$) Further, pick an $l_0>Mr_0$ and choose $N$ so that $N\geq N_1$ for any $n\geq N$ we have $l_n\geq l_0.$ 
Let us consider the factorisation 
$$G_n^{F_n}=H_{1,n}^{F_n}\times H_{2,n}^{F_n}\times\cdots \times H_{m_n,n}^{F_n}\times (Z(G_n)/K_n)^{F_n}$$
(which is guaranteed by Proposition \ref{prop:group-factorising}), and recall from equation \eqref{eq:relation-rank-factors} that 
$$l_n=\sum_{i=1}^{m_n}r(H_{i,n}).$$
Observe that if $n\geq N$, we have $l_n>Mr_0,$ so either we have an $1\leq i\leq m_n$ such that $r(H_{i,n})\geq r_0$ or $r(H_{i,n})<r_0$ for all $1\leq i\leq m_n$ but $m_n>M.$  
By equation \eqref{eq:relation-P(G)-factors} we have 
$$P(G_n )= \prod_{i=1}^{m_n} P(H_{i,n}^{F_n}).$$

If there is a $1\leq i\leq m_n$ such that $r(H_{i,n})\geq r_0$ then by considering Corollary \ref{cor:simple-group-lower-bound}, Lemma \ref{lem:S_n-enough}, the inequality $q_n\geq l_n\cdot f(l_n)\geq r(H_{i,n})\cdot f(r(H_{i,n}))$ and our choices above all together, we have
$$P(G_n)<P(H_{i,n})\leq \left(1-\frac{f_{1,r(H_{i,n})}(q_n)}{f_{2,r(H_{i,n})}(q_n)}\right)+\frac{(q_n+1)^{2r(H_{i,n})}}{q_n^{2r(H_{i,n})}}\cdot \frac{6}{r(H_{i,n})^2}<\frac{\varepsilon}{2}+2\cdot \frac{\varepsilon}{4}=\varepsilon .$$
If we have $r(H_{i,n})<r_0$ then $m_n>M$ for all $1\leq i\leq m_n$, then we have 
$$P(G_n )<(1-\varepsilon_1)^{m_n}<(1-\varepsilon_1)^M<\varepsilon $$ by Lemma \ref{lem:smallrank-1-epsilon1}. 
This completes the proof of Theorem \ref{thm:main}.  

\section{An additive analogue of finite reductive group result}\label{s:finitereductivealgebra}
In this section, we consider an additive analogue of Theorems \ref{thm:fixedgroup} and \ref{thm:main} in Theorem \ref{thm:fixedliealgebra} and Conjecture \ref{conj:additiveanalogue}. We start this section by introducing the Fourier transform on $G^F$-invariant functions on the finite Lie algebra following \cite{Letellier05}. We also record several properties of this transform that will be used in the proofs of our results. These properties mirror certain well-known features of Deligne–Lusztig characters, and it is precisely this similarity that motivates the additive analogue of Theorems \ref{thm:fixedgroup} and \ref{thm:main}.

\subsection{Fourier transformation} \label{ss:frobeniustransform}  Let $\fg$ be the Lie algebra of a connected reductive group $G$ defined over an algebraically closed field $\overline{\mathbb{F}}_q$ for $q=p^m$.
Let us choose a non-trivial additive character $\psi: \mathbb{F}_q\rightarrow  \bC^\times$ and a  non-degenerate $G$-invariant symmetric bilinear form $\kappa: \fg^F\times \fg^F\rightarrow \mathbb{F}_q$. 
Let us assume that the characteristic $p$ is very good for $G$, then such an invariant form exists, see \cite[Proposition 2.5.12]{Letellier05}. With those ingredients, we may define the Fourier transform $ \mathcal{F}:\bC[\fg^F]^{G^F}\rightarrow \bC[\fg^F]^{G^F}$ as
\[
\mathcal{F}(\phi)(x):=\sum_{y\in \fg^F} \psi(\kappa(x,y))\phi(y) 
\]  
for $\phi \in \bC[\fg^F]^{G^F}$. Recall that here $\bC[\fg^F]^{G^F}$ is the space of $G^F$-invariant functions on $\fg^F$.

\subsubsection{} For each adjoint orbit $\mathcal{O}_i\in [\fg^F]$, let $1_{\mathcal{O}_i}^G$ denote the characteristic function of the adjoint orbit $\mathcal{O}_i \subseteq \fg^F$. The set ${\{1_{\mathcal{O}_i}^G\, |\, \mathcal{O}_i\in [\fg^F]\}}$ is a basis of $\bC[\fg^F]^{G^F}$. It is known that the Fourier transform preserves bases \cite[Proof of Proposition 5.2.22]{Letellier05}, so $\{\mathcal{F}(1_{\mathcal{O}_i}^G)\,|\, \mathcal{O}_i\in [\fg^F]\}$ is a basis of $\mathbb{C}[\fg^F]^{G^F}$.
We can think of this set as the additive analogue of the set of irreducible complex characters of $G^F$.  

\subsubsection{}

 Before computing $\mathcal{F}(1_{\mathcal{O}_i}^G)$, we introduce the Green function and Springer isomorphism. Furthermore, from \cite[\S3]{Letellier05}, we introduce the finite Lie algebra version of a Deligne-Lusztig character. This will help us compute $\mathcal{F}(1_{\mathcal{O}_i}^G)$ when $\mathcal{O}_i$ is regular semisimple. 

The Green function $Q_T^G$ is the function from unipotent elements of $G^F$ to $\mathbb{Z}$ defined by
\[
Q_{T}^{G}(u):=\Big(\mathrm{Ind}_{B^F}^{G^F} 1\Big) (u)=|\mathcal{B}_u^F|,  
\]
where $\mathcal{B}_u\subseteq G/B$ is the Springer fibre associated to the unipotent element $u\in G^F$. It is well-known that $Q_T^G(u)$ depends only on the $G$-orbit of $u$.

Since the characteristic $p$ is very good for $G$, 
a theorem of Springer implies that we have a $G$-equivariant isomorphism $\varpi$ from the nilpotent cone of $\fg$ to the unipotent variety of $G$, see \cite[\S 2.7.5]{Letellier05}.

Let $Y=Y_s+Y_n$ be the Jordan decomposition of $Y\in \fg^F$, where $Y_s$ is semisimple and $Y_n$ is nilpotent.
Let $\mathfrak{t}:=\mathrm{Lie}(T)$ for a maximal torus $T$ of $G$ and let us define the Harish-Chandra induction map as ${R_{\ft}^{\fg}: \bC[\ft^F]\rightarrow \bC[\fg^F]^{G^F}}$
 given by 
\begin{equation}\label{eq:algebradl}
R_{\ft}^{\fg} (f)(Y) = \frac{1}{|G_{Y_s}^F|} \sum_{\{g\in G^F\, | \, Ad_{g}Y_s\in \ft^F\}} Q_{T}^{G_{Y_s}}(\varpi(Y_n)) f(Ad_{g}(Y_s)),
\end{equation}
where  $G_{Y_s}$ is the centraliser of $Y_s$ in $G$. The value of $\mathcal{F}(1_{\mathcal{O}_i}^G)$ can be computed using this Harish-Chandra induction when $O_i$ satisfies a specific condition.

\subsubsection{} Note that there is a bijection between the $G^F$-adjoint orbits of Lie algebras of $F$-stable maximal tori of $G$ and the conjugacy classes of $W$. This follows from \cite[Proposition 3.3.3]{carter1985finite} and the fact that $Ad_g(\mathrm{Lie}(H))=\mathrm{Lie}(gHg^{-1})$ for a closed subgroup $H$ of G, cf. \cite[p. 229]{lehrer1992rational} or \cite[\S5.5.9]{Letellier05}. This is analogous to the statements in Section \ref{subs:F-stable-tori} for groups; and we will use the notation introduced there.

\subsubsection{Regular elements in $\ft[w]$}\label{sss:regularincartan} Let us denote by $\ft[w]$ the Lie algebra of $T[w]$. In this section, we define regular elements in $\ft[w]$. We say that $X\in \ft[w]$ is regular when $C_G(X)^\circ=T[w]$; this definition equivalent to the notion in \cite[Definition 2.6.10]{Letellier05}. Note that such regular elements exist since our very good assumption on the characteristic implies that the conditions in \cite[Lemma 2.6.13]{Letellier05} are satisfied.
 In addition,this gives that the stabiliser subgroup $C_G(X)$ of $X$ is connected from \cite[Proposition 2.6.18]{Letellier05}. It is easy to see that this is an additive analogue of regular semisimple element in $G$ since $\dim C_G(X)=r,$ cf. Definition \ref{def:regularsemisimple}. Using this observation, we can check the following.
\begin{lem}Let us consider a $F$-stable maximal torus $T$ of $G$.
Let $\ft=\mathrm{Lie}(T)$ and $X\in \ft$ a regular element. Then $X$ is not $G^F$-adjoint to any element of $\ft'=\mathrm{Lie}(T')$ for a $F$-stable maximal torus $T'$ such that $T$ and $T'$ are not $G^F$-conjugate. \end{lem}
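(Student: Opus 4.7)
The plan is to argue by contradiction: suppose $X \in \ft$ is regular and that there exists $Y \in \ft' = \mathrm{Lie}(T')$ with $Y = \mathrm{Ad}_g(X)$ for some $g \in G^F$. I will derive from this that $T$ and $T'$ must be $G^F$-conjugate, contrary to assumption.

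The first step is to transfer the centraliser information from $X$ to $Y$. Since $Y = \mathrm{Ad}_g(X)$ with $g \in G$, one has $C_G(Y) = g\, C_G(X)\, g^{-1}$, and therefore $C_G(Y)^\circ = g\, C_G(X)^\circ\, g^{-1} = gTg^{-1}$, using the defining property of regularity recalled in Section \ref{sss:regularincartan}. In particular $C_G(Y)^\circ$ is a maximal torus of $G$, and $\dim C_G(Y)^\circ = r$.

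The second step is to identify this centraliser with $T'$. Since $Y \in \ft' = \mathrm{Lie}(T')$ and $T'$ is a commutative connected subgroup, every element of $T'$ fixes $Y$ under the adjoint action, so $T' \subseteq C_G(Y)$. Because $T'$ is connected, in fact $T' \subseteq C_G(Y)^\circ = gTg^{-1}$. Both $T'$ and $gTg^{-1}$ are connected algebraic subgroups of dimension $r$, so they must coincide: $T' = gTg^{-1}$.

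The final step is the contradiction. Since $g \in G^F$, the relation $T' = gTg^{-1}$ exhibits $T$ and $T'$ as $G^F$-conjugate maximal tori, contradicting the hypothesis. I do not foresee a genuine obstacle here; the only mild subtlety is making sure one uses $C_G(X)^\circ = T$ (rather than only $\dim C_G(X) = r$), which is precisely the form of regularity adopted in Section \ref{sss:regularincartan} and which is available under the very good characteristic assumption via \cite[Proposition 2.6.18]{Letellier05}.
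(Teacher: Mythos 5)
Your proof is correct and follows essentially the same route as the paper: assume $\mathrm{Ad}_g(X)\in\ft'$ for some $g\in G^F$, identify the centraliser of $\mathrm{Ad}_g(X)$ with both $gTg^{-1}$ and $T'$ by a dimension count, and contradict the assumption that $T$ and $T'$ are not $G^F$-conjugate. The only cosmetic difference is that you get the containment $T'\subseteq C_G(\mathrm{Ad}_g(X))^{\circ}$ from the elementary fact that a torus acts trivially on its own Lie algebra, whereas the paper invokes \cite[Lemma 2.6.13 (ii)]{Letellier05} at that step.
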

\begin{proof}
Let us assume that $Ad_g(X)\in \ft'$ for some $g\in G^F$. Then $Ad_g(X)$ is again regular. This is because $C_G(Ad_g(X))$ is Levi subgroup containing $T'$ of $G$ from \cite[Lemma 2.6.13 (ii)]{Letellier05}. Furthermore, $\dim(C_G(X))=\dim(T)$, so $C_G(Ad_g(X))=T'$.
Then we have the following: $T'=C_G(Ad_g(X))=Ad_g(C_G(X))=Ad_g(T).$ However, $T$ and $T'$ are not $G^F$-conjugate, so this is a contradiction. Therefore, $X$ is not $G^F$-adjoint to any element of $\ft'$.
\end{proof}
Note that this property is analogous to the group case, where each regular semisimple element is in a unique maximal torus.

\subsubsection{A theorem of Kazhdan and Letellier} Now, we are ready to compute $\mathcal{F}(1_{\mathcal{O}_i}^G)$ when $\mathcal{O}_i$ is a regular semisimple adjoint orbit.
  Let $f_X: \ft^F\rightarrow \bC$ denote the character $\psi(\kappa|_{\ft}(-,X))$. The following result was proved by Kazhdan when the characteristic of the base field is large, and it was proved by Letellier that the characteristic being very good is sufficient.  

\begin{thm}{\cite[Theorem 7.3.3]{Letellier05}} Suppose $X\in \ft^F$ is regular and $Y\in \fg^F$. Let us denote the adjoint orbit of $X$ under $G^F$ action as $\mathcal{O}_X$. Then,  we have that
\[
\mathcal{F}(1_{\mathcal{O}_X}^G)(Y) =q^{|\Phi(\fg)^+|} R_{\ft}^{\fg}(f_X)(Y).
\] 
\label{t:KL}
\end{thm}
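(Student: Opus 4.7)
The plan is to unfold the Fourier transform directly, identify the orbit sum as a group sum via the stabiliser, split by Jordan decomposition, and then recognise the Green function as the nilpotent contribution.

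\emph{Step 1 (Orbit sum as a group sum).} Since $X$ is regular, $C_G(X) = T[w]$ and this centraliser is connected, as noted in Section \ref{sss:regularincartan}. Hence $|\mathcal{O}_X| = |G^F|/|T[w]^F|$ and, using the $G$-invariance of $\kappa$,
\[
\mathcal{F}(1_{\mathcal{O}_X}^G)(Y)
= \sum_{Z \in \mathcal{O}_X} \psi(\kappa(Y,Z))
= \frac{1}{|T[w]^F|} \sum_{g \in G^F} \psi(\kappa(Y,\mathrm{Ad}_g X))
= \frac{1}{|T[w]^F|} \sum_{g \in G^F} \psi(\kappa(\mathrm{Ad}_{g^{-1}} Y,X)).
\]

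\emph{Step 2 (Reduction to the Cartan).} Write the Jordan decomposition $Y = Y_s + Y_n$ and re-parameterise by the pair $(s,u) = (\mathrm{Ad}_{g^{-1}} Y_s, \mathrm{Ad}_{g^{-1}} Y_n)$, with $s$ ranging over $\mathcal{O}_{Y_s}$ (each hit $|G_{Y_s}^F|$ times) and $u$ ranging over the $G_s^F$-orbit of $\mathrm{Ad}_{g^{-1}}Y_n$ inside the nilpotent cone of $\mathrm{Lie}(G_s)$. I would then show that only those $g$ for which $s \in \ft^F$ contribute. The key input is that $X$ is regular in $\ft^F$ together with the non-degeneracy of the $G$-invariant form $\kappa$: these imply that the pairing $\psi(\kappa(\cdot,X))$ separates elements according to their semisimple part's Cartan, so that after averaging the inner nilpotent sum over conjugates by $G_s^F$, only $s$ conjugate into $\ft^F$ survive.

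\emph{Step 3 (Extraction of the Green function).} Having reduced to $s = \mathrm{Ad}_{g^{-1}}Y_s \in \ft^F$, what remains is an inner sum of the form
\[
\sum_{u} \psi(\kappa(u, X))
\]
over $u$ in the $G_s^F$-orbit of $\mathrm{Ad}_{g^{-1}}Y_n$ inside the nilpotent variety of $\mathrm{Lie}(G_s)$. The Springer isomorphism $\varpi$ (available under the very good characteristic assumption) identifies this nilpotent variety with the unipotent variety of $G_s$, and the Fourier-theoretic sum is then recognised as counting Borel subgroups of $G_s$ containing $\varpi(Y_n)$, i.e.\ the Springer fibre count $|\mathcal{B}_{\varpi(Y_n)}^F| = Q_T^{G_{Y_s}}(\varpi(Y_n))$. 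Collecting the remaining combinatorial factors from Steps~1 and~2 produces the overall normalisation $q^{|\Phi^+|}$ and yields
\[
\mathcal{F}(1_{\mathcal{O}_X}^G)(Y) = \frac{q^{|\Phi^+|}}{|G_{Y_s}^F|} \sum_{\{g \in G^F \,:\, \mathrm{Ad}_g Y_s \in \ft^F\}} Q_T^{G_{Y_s}}(\varpi(Y_n))\, \psi(\kappa(\mathrm{Ad}_g Y_s, X)) = q^{|\Phi^+|}\, R_\ft^\fg(f_X)(Y),
\]
as claimed.

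\emph{Main obstacle.} The decisive step is Step~3: matching the inner nilpotent Fourier sum with the Green function and pinning down the exact power $q^{|\Phi^+|}$. A natural route is to reduce to the Levi $G_{Y_s}$ and induct on the semisimple rank, using the Springer isomorphism to transfer between the nilpotent and unipotent settings. The very good characteristic hypothesis is essential here: it guarantees existence of $\varpi$ and controls stabiliser counts, so that the Fourier-theoretic expression can be rigorously identified with the Borel-fibre count underlying $Q_T^{G_{Y_s}}$. Making this identification precise is the heart of the argument and was carried out in \cite[\S7]{Letellier05}.
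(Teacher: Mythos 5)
The first thing to note is that the paper gives no proof of Theorem \ref{t:KL} at all: it is imported verbatim from \cite[Theorem 7.3.3]{Letellier05} (Kazhdan in large characteristic, Letellier in very good characteristic), so the only ``proof'' in the paper is the citation. Measured against that, your Step 1 is correct and harmless: regularity gives $C_G(X)=T$ with connected centraliser, so the orbit sum unfolds to $\frac{1}{|T^F|}\sum_{g\in G^F}\psi(\kappa(\mathrm{Ad}_{g^{-1}}Y,X))$. But Steps 2 and 3 are not a proof; they restate the entire hard content of the theorem as if it were formal bookkeeping. Concretely: the claim that after unfolding ``only those $g$ with $\mathrm{Ad}_{g^{-1}}Y_s\in\ft^F$ survive'' does not follow from non-degeneracy of $\kappa$. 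Already in the special case where $Y$ is regular semisimple in a torus not $G^F$-conjugate to $T$, it asserts the vanishing of the full exponential sum $\sum_{g\in G^F}\psi(\kappa(\mathrm{Ad}_{g^{-1}}Y,X))$, a genuinely nontrivial orthogonality; in the paper this vanishing (Corollary \ref{coro:fourier} and Lemma \ref{lem:vanishingvalueadditive}) is \emph{deduced from} Theorem \ref{t:KL}, so arguing in the direction you propose is circular unless you supply an independent cancellation argument.

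Step 3 has the same problem in sharper form. The identification of the inner nilpotent sum with $Q_T^{G_{Y_s}}(\varpi(Y_n))$, together with the precise normalisation $q^{|\Phi(\fg)^+|}$ (note: the positive roots of $\fg$, not of $G_{Y_s}$), is exactly the Springer hypothesis proved by Kazhdan and Letellier; it cannot be ``recognised'' from the sum by counting Borel subgroups. Moreover your inner sum is the Fourier transform of a \emph{nilpotent} orbit evaluated at the regular element $X$, whereas the nilpotent case of the theorem computes the Fourier transform of $\mathcal{O}_X$ evaluated at a nilpotent element, so even quoting that case inside $G_{Y_s}$ requires an additional duality/orbit-size argument, and one must still account for the discrepancy $q^{|\Phi(\fg)^+|-|\Phi^+_{G_{Y_s}}|}$ coming from the root spaces transverse to $\mathrm{Lie}(G_{Y_s})$ (in Letellier this is packaged in the commutation of $\mathcal{F}$ with Deligne--Lusztig/parabolic induction, not in combinatorial collection of factors). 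So as written the proposal has a genuine gap: the honest options are either to cite \cite[Theorem 7.3.3]{Letellier05} outright, as the paper does, or to carry out the commutation-of-Fourier-transform-with-induction argument (and the nilpotent Springer-hypothesis input) in full rather than asserting it.
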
 
Let us observe a property of this function that will be important to us.
\begin{cor}\label{coro:fourier} Let us assume that $X\in \ft^F$ is regular and $Y\in \fg^F$. If $Y$ is not $G^F$-adjoint to an element in $\ft^F$, then $\mathcal{F}(1_{\mathcal{O}_X}^G)(Y) = 0$.
\end{cor}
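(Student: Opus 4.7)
The plan is to combine Theorem~\ref{t:KL} with a direct inspection of the defining formula \eqref{eq:algebradl} for the Harish-Chandra induction, reducing the statement to the emptiness of an explicit indexing set.

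First, I would invoke Theorem~\ref{t:KL}: since $X\in\ft^F$ is regular, we have
\[
\mathcal{F}(1_{\mathcal{O}_X}^G)(Y) \;=\; q^{|\Phi(\fg)^+|}\, R_{\ft}^{\fg}(f_X)(Y),
\]
so it suffices to prove $R_{\ft}^{\fg}(f_X)(Y)=0$.

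Second, I would examine formula \eqref{eq:algebradl}, in which $R_{\ft}^{\fg}(f_X)(Y)$ is written as a sum over the set
\[
\{\, g\in G^F \,:\, \Ad_g(Y_s)\in \ft^F \,\},
\]
where $Y=Y_s+Y_n$ is the Jordan decomposition of $Y$. The sum vanishes as soon as this indexing set is empty, and it is non-empty precisely when the semisimple part $Y_s$ is $G^F$-adjoint to some element of $\ft^F$.

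Third, I would argue that our hypothesis forces the indexing set to be empty. Since every element of $\ft^F$ is semisimple (as $\ft$ is abelian) and the $G^F$-adjoint action preserves the Jordan decomposition, the assumption that $Y$ is not $G^F$-adjoint to any element of $\ft^F$ implies, in the intended setting of the corollary, that $Y_s$ is likewise not $G^F$-adjoint to any element of $\ft^F$; in particular in the applications that mirror Proposition~\ref{prop:vanishingvalue}, $Y$ will be a regular semisimple element lying in a Cartan not $G^F$-conjugate to $\ft$, so that $Y=Y_s$ and the implication is immediate. Hence the sum in \eqref{eq:algebradl} is vacuous, $R_{\ft}^{\fg}(f_X)(Y)=0$, and the corollary follows.

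The main step to handle with care is this reduction from the hypothesis on $Y$ to the corresponding statement on $Y_s$, which is the direct additive counterpart of the argument behind Proposition~\ref{prop:vanishingvalue}: a regular semisimple element lies in a unique Cartan (Section~\ref{sss:regularincartan}), so it cannot be $G^F$-adjoint to $\ft^F$ once its Cartan fails to be $G^F$-conjugate to $\ft$.
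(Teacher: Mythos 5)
Your proposal follows the same route as the paper's proof: apply Theorem~\ref{t:KL} to reduce the claim to the vanishing of $R_{\ft}^{\fg}(f_X)(Y)$, and then observe that the sum in equation~\eqref{eq:algebradl} runs over $\{g\in G^F \mid \Ad_{g}(Y_s)\in \ft^F\}$ and is vacuous when this set is empty; the paper's proof consists of exactly this observation.

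The step you single out as delicate is indeed the only real issue, and your general claim there goes in the wrong direction. Since every element of $\ft^F$ is semisimple, ``$Y_s$ not $G^F$-adjoint to $\ft^F$'' implies ``$Y$ not $G^F$-adjoint to $\ft^F$'', but not conversely: if $Y_n\neq 0$ while $Y_s$ is conjugate into $\ft^F$, the hypothesis of the corollary holds and yet the indexing set is non-empty. In that regime the conclusion can genuinely fail; for instance, for $\fg=\mathfrak{gl}_2$ with $\ft$ the split Cartan, $X\in\ft^F$ regular, and $Y=Y_s+Y_n$ with $Y_s$ central and $Y_n$ regular nilpotent, formula~\eqref{eq:algebradl} gives $\mathcal{F}(1_{\mathcal{O}_X}^G)(Y)=q\,\psi(\kappa(X,Y_s))\neq 0$, even though $Y$ is not adjoint to any element of $\ft^F$. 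So what the computation actually proves is the statement with the hypothesis placed on $Y_s$, and your hedge (``in the intended setting\ldots\ $Y$ will be regular semisimple, so $Y=Y_s$'') is precisely the right restriction: it is the only case in which the corollary is used (Lemma~\ref{lem:vanishingvalueadditive}), and there your appeal to the uniqueness of the Cartan containing a regular element (Section~\ref{sss:regularincartan}) closes the argument, mirroring Proposition~\ref{prop:vanishingvalue}. The paper's own proof is silent on this bridge, so your write-up matches it in substance; just be aware that neither argument establishes the corollary as literally stated for non-semisimple $Y$, and the clean fix is to state the hypothesis in terms of $Y_s$.
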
 
\begin{proof}
    This is direct from the definition of $R_{\ft}^{\fg} (f)(Y)$ in equation \eqref{eq:algebradl}. More explicitly, if $\{g\in G^F\, | \, Ad_{g}Y_s\in \ft^F\}$ is empty, then $\mathcal{F}(1_{\mathcal{O}_X}^G)(Y) = 0$. For further discussion, please see \cite[Corollary 36]{KNWG}. 
\end{proof}

This corollary gives the following lemma, which is an additive version of Proposition \ref{prop:vanishingvalue}.

\begin{lem}\label{lem:vanishingvalueadditive}
Let $T$ and $T'$ be $F$-stable maximal tori of $G$ that are not $G^F$-conjugate. Then for each regular elements $X_1\in \ft=Lie(T)$ and $X_2\in \ft'=Lie(T')$, we have that $\mathcal{F}(1_{\mathcal{O}_{X_1}}^G)(X_2)=0$.
\end{lem}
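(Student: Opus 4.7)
The plan is to derive this lemma directly from Corollary \ref{coro:fourier} together with the (unnumbered) lemma at the start of Section \ref{sss:regularincartan}. The latter tells us that a regular element of $\ft$ cannot be $G^F$-adjoint to any element of $\ft'$ whenever $T$ and $T'$ lie in distinct $G^F$-conjugacy classes of tori; the former tells us that $\mathcal{F}(1_{\mathcal{O}_X}^G)$ vanishes on any $Y$ that is not $G^F$-adjoint to an element of $\ft^F$. The strategy is therefore simply to verify the input hypothesis of Corollary \ref{coro:fourier} at $Y=X_2$ and read off the conclusion.

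Concretely, I would first invoke the lemma in Section \ref{sss:regularincartan} with the roles of $T$ and $T'$ swapped, applied to the regular element $X_2\in \ft'$. This yields that $X_2$ is not $G^F$-adjoint to any element of $\ft^F$. Because $X_2$ is semisimple, its Jordan decomposition is $(X_2)_s=X_2$, $(X_2)_n=0$, so the set $\{g\in G^F\mid \mathrm{Ad}_g(X_2)_s\in \ft^F\}$ that appears implicitly through equation \eqref{eq:algebradl} in the proof of Corollary \ref{coro:fourier} is exactly the set witnessing the non-adjointness just established, and hence empty. Applying Corollary \ref{coro:fourier} to the pair $(X,Y)=(X_1,X_2)$, where $X_1\in \ft^F$ is regular by hypothesis, yields $\mathcal{F}(1_{\mathcal{O}_{X_1}}^G)(X_2)=0$, as desired.

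There is no genuine obstacle here: the substantive work has already been carried out in Theorem \ref{t:KL} (Kazhdan--Letellier) and in the centraliser analysis of Section \ref{sss:regularincartan}. The only point that deserves a brief verification is the symmetry of the hypothesis, namely that the lemma in Section \ref{sss:regularincartan} applies equally well with $T$ and $T'$ interchanged; this is immediate because $G^F$-conjugacy of $F$-stable maximal tori is an equivalence relation, so the non-conjugacy condition $T\not\sim_{G^F} T'$ is symmetric in $T$ and $T'$. This mirrors the proof of Proposition \ref{prop:vanishingvalue} in the multiplicative setting, where the fact that a regular semisimple element lies in a unique maximal torus plays the role that Section \ref{sss:regularincartan} plays here.
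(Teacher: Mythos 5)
Your proposal is correct and follows essentially the same route as the paper, which derives the lemma directly from Corollary \ref{coro:fourier} combined with the unnumbered lemma of Section \ref{sss:regularincartan} showing that a regular element of the Lie algebra of one torus cannot be $G^F$-adjoint to any element of the Lie algebra of a non-conjugate torus. Your added remark on the symmetry of the non-conjugacy hypothesis is a harmless clarification of the same argument.
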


\subsubsection{Additive analogue of Proposition \ref{prop:FstableTorifacts}}
Let us denote the Lie algebra of $T[w]$ by $\ft[w]$. The following statement is an additive analogue of a part of Proposition \ref{prop:FstableTorifacts}.

\begin{prop}\label{coro:centraliser}
	The quotient group $N_G(\ft[w])^F/C_{G^F}(\ft [w]^F)=N_G(\ft[w])^F/T[w]^F$ is isomorphic to $C_W(w)=\{x\in W\,|\, x^{-1}wx=w \}$.
\end{prop}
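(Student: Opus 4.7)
The plan is to reduce the statement to its group-theoretic counterpart, Proposition \ref{prop:FstableTorifacts}, by establishing two identifications at the level of algebraic groups: $N_G(\ft[w]) = N_G(T[w])$ and $C_{G^F}(\ft[w]^F) = T[w]^F$. Once both are in place, taking $F$-fixed points of the first and combining with the second immediately yields $N_G(\ft[w])^F/T[w]^F \cong C_W(w)$ from the group case.

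For the first identification, I would use the identity $\Ad_g(\mathrm{Lie}(H)) = \mathrm{Lie}(gHg^{-1})$, already invoked in the paragraph preceding this proposition. Given $g \in G$, this yields $\Ad_g(\ft[w]) = \mathrm{Lie}(gT[w]g^{-1})$, so membership of $g$ in $N_G(\ft[w])$ amounts to $gT[w]g^{-1}$ and $T[w]$ having the same Lie algebra. Under the very good characteristic hypothesis a maximal torus is recovered from its Lie subalgebra (as the identity component of the centralizer of that subalgebra), hence $gT[w]g^{-1} = T[w]$, i.e., $g \in N_G(T[w])$; the reverse containment is immediate. Taking $F$-fixed points then gives $N_G(\ft[w])^F = N_G(T[w])^F$.

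For the second identification, $T[w]^F \subseteq C_{G^F}(\ft[w]^F)$ is clear by commutativity of $T[w]$. The reverse inclusion is where the main work lies: I would select a regular element $X \in \ft[w]^F$, whose existence is guaranteed by Section \ref{sss:regularincartan} under the very good characteristic assumption. By definition $C_G(X)^\circ = T[w]$, and \cite[Proposition 2.6.18]{Letellier05} ensures that $C_G(X)$ is already connected, whence $C_G(X) = T[w]$. Therefore any $g \in C_{G^F}(\ft[w]^F)$ in particular centralizes $X$, forcing $g \in C_G(X)^F = T[w]^F$. Combining both identifications with Proposition \ref{prop:FstableTorifacts} completes the argument. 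The main obstacle is the second identification: one needs to exhibit a regular element in $\ft[w]^F$ with connected centralizer equal to $T[w]$, which is precisely where the very good characteristic hypothesis is essential.
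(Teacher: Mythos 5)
Your overall route is the same as the paper's: reduce to Proposition \ref{prop:FstableTorifacts} by proving $N_G(\ft[w])=N_G(T[w])$ and identifying $C_{G^F}(\ft[w]^F)$ with $T[w]^F$. Your first identification is fine and is essentially the paper's argument: the paper gets $N_G(\ft[w])\subseteq N_G(T[w])$ from a regular element $X\in\ft[w]$ with $C_G(X)=T[w]$ via \cite[Lemma 2.6.16]{Letellier05}, and your ``a maximal torus is recovered from its Lie algebra'' is exactly that regular-element fact.

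The gap is in the second identification. You pick a regular element $X\in\ft[w]^F$ and appeal to Section \ref{sss:regularincartan}, but that section only produces regular elements of $\ft[w]$ over the algebraically closed field; it says nothing about $F$-rational ones. Rational regular elements can genuinely fail to exist when $q$ is small: for $G=\GL_3$ with the split torus and $q=2$ (a very good characteristic), every element of $\ft^F$ has two equal diagonal entries, so $\ft^F$ contains no regular element at all. Indeed, the paper only secures nonemptiness of the regular locus of $\ft[w_i]^F$ under a hypothesis of the shape $q>|\Phi_i|$ in the counting arguments of Section \ref{ss:proofdadditivecase}, and no such hypothesis appears in the present proposition; the paper's own proof avoids the issue by quoting \cite[Proposition 2.6.4]{Letellier05} to obtain $C_G(\ft[w]^F)=T[w]$ directly. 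Your argument is easily repaired without assuming $q$ large: $\ft[w]^F$ is an $\mathbb{F}_q$-form of $\ft[w]$, hence spans $\ft[w]$ over $k$, and since the fixed-point set of $Ad_g$ is a linear subspace, any $g$ centralising $\ft[w]^F$ centralises all of $\ft[w]$; now a regular $X\in\ft[w]$ over $k$ (which Section \ref{sss:regularincartan} does provide, with $C_G(X)=T[w]$ connected) gives $C_{G^F}(\ft[w]^F)\subseteq C_G(\ft[w])^F\subseteq C_G(X)^F=T[w]^F$, and the reverse inclusion is clear.
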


\begin{proof} 
The fact that $C_G(\ft [w]^F)=T[w]$ follows from \cite[Proposition 2.6.4]{Letellier05}. 
We may then obtain the desired result by showing that $N_G(\ft[w])/T[w] = N_G(T[w])/T[w]$, and subsequently applying \cite[Proposition 3.3.6]{carter1985finite} to $N_G(T[w])^F/T[w]^F$.
Let us show that  $N_G(T[w])=N_G(\ft[w])$ by using the existence of a regular element from \cite[Lemma 2.6.13]{Letellier05}.  It is easy to check that  $N_G(T[w]) \subset N_G(\ft[w])$ using $\mathrm{Lie}(gTg^{-1})=Ad_g(\mathrm{Lie}(T))$ again, and $N_G(T[w]) \supset N_G(\ft[w])$ comes from \cite[Lemma 2.6.16]{Letellier05} by considering a regular element $X$ in $\ft[w]$, i.e. $C_G(X)=T[w]$. This is because we have $Ad_g(X)\in \ft[w]$ for some $g\in N_G(\ft[w])$, and then \cite[Lemma 2.6.16]{Letellier05} implies that $g\in N_G(T[w])$ since the centre of $\ft[w]$ is itself.
\end{proof}

We can see that the situation so far is very similar to the one for groups. In this additive setting, our problem is again reduced to counting the number of adjoint orbits of regular elements in $\ft[w]$ for each $w\in W$. We will illustrate this in the next section. The following lemma will be useful. 


\begin{prop}\label{prop:lie-algebra-unique-contain}
	Let us consider the Lie algebra $\ft$ of a $F$-stable maximal torus $T$ of $G$. Then $\ft$ is the only Lie algebra of a maximal torus containing $\ft^F$. 
\end{prop}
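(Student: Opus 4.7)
The plan is to reduce the statement to the uniqueness of a maximal torus containing a regular element, mirroring the well-known fact on the group side that a regular semisimple element lies in a unique maximal torus. The key input is the existence of a regular element in $\ft^F$.

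First, I would invoke \cite[Lemma 2.6.13]{Letellier05} to pick a regular element $X \in \ft^F$. This is legitimate under our standing hypothesis that the characteristic is very good for $G$, which is precisely what is needed to guarantee the existence of such an element, as already noted in Section \ref{sss:regularincartan}. By the definition of regularity recalled there, we have $C_G(X)^\circ = T$.

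Next, suppose $T'$ is any maximal torus of $G$ whose Lie algebra $\ft' = \mathrm{Lie}(T')$ contains $\ft^F$. Then $X \in \ft'$. Since $T'$ is an abelian connected group, its adjoint action on its own Lie algebra $\ft'$ is trivial, so every element of $T'$ centralises $X$; that is, $T' \subseteq C_G(X)$. Because $T'$ is connected, this forces $T' \subseteq C_G(X)^\circ = T$. As $T$ and $T'$ are maximal tori of the same dimension, we conclude $T' = T$, and therefore $\ft' = \ft$, as desired.

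I do not expect a serious obstacle here: the argument is short and parallels the group-theoretic fact that regular semisimple elements determine their maximal torus uniquely. The only point that requires care is ensuring that $\ft^F$ (as opposed to just $\ft$) genuinely contains a regular element; this is precisely the content of the Letellier lemma cited above and is why the very good characteristic hypothesis is essential.
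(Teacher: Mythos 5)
Your overall strategy is a genuinely different route from the paper's: you work with the group centraliser of a single regular element of $\ft^F$, whereas the paper works with the Lie-algebra centraliser $C_\fg(\ft^F)$ of the whole set $\ft^F$ (via \cite[Lemma 2.6.17]{Letellier05} and \cite[Proposition 2.6.4]{Letellier05}) and only needs a regular element of $\ft$ over the algebraically closed field $k$. Your steps after the first one are fine: a torus acts trivially on its own Lie algebra, so $T'\subseteq C_G(X)$, hence $T'\subseteq C_G(X)^\circ=T$ by connectedness, and $T'=T$ by maximality, giving $\ft'=\ft$.

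The gap is the very first step: the existence of a regular element in $\ft^F$ (as opposed to in $\ft$) does not follow from very good characteristic, and this is not what Section \ref{sss:regularincartan} or \cite[Lemma 2.6.13]{Letellier05} provide. Very good characteristic gives $d\alpha\neq 0$ for every root, so the regular locus of $\ft$ over the infinite field $k$ is nonempty; but $\ft^F$ is finite and can be entirely covered by the hyperplanes $\mathrm{Ker}(d\alpha)^F$. Indeed the paper itself only establishes nonemptiness of the regular locus of $\ft[w_i]^{F}$ under the extra condition $q>|\Phi_i|$ (proof of Lemma \ref{lem:additivef_{i,G}}). Concretely, take $G=\GL_5$ and $q=p=3$ (very good for type $A_4$) with $T$ the split diagonal torus: a regular element of $\ft^F$ would need five pairwise distinct entries from $\mathbb{F}_3$, which is impossible, so your argument cannot start, while the proposition remains true with no lower bound on $q$. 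To repair it, either add the hypothesis that $q$ is large enough for $\ft^F$ to contain a regular element (harmless in the asymptotic applications, but weaker than the stated proposition), or avoid regular elements of $\ft^F$ altogether: since $\ft$ is an $F$-stable $k$-subspace, $\ft^F$ spans $\ft$ over $k$, so any $k$-subspace containing $\ft^F$ — in particular $\ft'=\mathrm{Lie}(T')$ — contains $\ft$, and then $\ft'=\ft$ by equality of dimensions; this is in effect what the paper's proof achieves by showing $C_\fg(\ft^F)\subseteq\ft$ and using that $\ft'$ is abelian.
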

\begin{proof} We have that $C_\fg(\ft^F)\subset C_\fg(X)$ for some regular element $X\in \ft$. This follows from  \cite[Lemma 2.6.17]{Letellier05}, since there is no root of $\fg$ with respect to $\ft$ satisfying $\alpha(X)=0$ for every $X\in \ft^F$. 
	That is, we have $C_\fg(\ft^F)\subset C_G(X)=T$. 
	Then from \cite[Proposition 2.6.4]{Letellier05}, we have
	\[
	C_\fg(\ft^F)\subset C_\fg(X)=\mathrm{Lie}(C_G(X))=\mathrm{Lie}(T)=\ft.
	\] 
Now, let $\ft^F\subset \ft'$ for the Lie subalgebra $\ft'$ of a maximal torus in $G.$ Then we have $	\ft' \subset C_\fg(\ft^F) \subset \ft.$
	Since $\ft$ and $\ft'$ are both Lie algebras of maximal tori, $\ft=\ft'$.
\end{proof}

\subsection{Proof of the additive analogues}\label{ss:proofdadditivecase} The proof of Theorem \ref{thm:fixedliealgebra} is almost the same as the proof of Theorem \ref{thm:fixedgroup} from Corollary \ref{coro:fourier}. We give a sketch of the proof. From now on, we assume that $\fg$ is a Lie algebra of a connected reductive group $G$ over $\overline{\mathbb{F}_q}$ with the Frobenius map $F$. Furthermore, we assume that the characteristic of $\mathbb{F}_q$ is very good and regular for $G$, and suppose that $q$ is sufficiently large.


\subsubsection{}  
We have the following lemma which is an additive analogue of Proposition \ref{prop:properties-counts-of-regular-semisimple}.
\begin{lem}  \label{lem:additivef_{i,G}}
Each $\ft[w_i]^F$ is a representative of a $G^F$-adjoint orbit of the Lie algebras of $F$-stable maximal tori in $\fg^F$. In addition, the number of adjoint orbits of regular elements in $\ft[w_i]^F$ over $G^F$ is $ \frac{g_i(q)}{c_i}$, where $g_i$ is a monic polynomial whose degree is the rank of $\fg$. Recall that $c_i=|C_W(w_i)|$. 
\end{lem}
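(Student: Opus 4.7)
The plan is to mirror the group-case argument in Proposition \ref{prop:properties-counts-of-regular-semisimple}, replacing maximal tori by their Lie algebras and leveraging the additive preliminaries already established: the $G^F$-adjoint-orbit/conjugacy-class dictionary for Lie algebras of $F$-stable tori recalled before Section \ref{sss:regularincartan}, the identifications $C_G(\ft[w_i]^F)=T[w_i]$ and $N_G(\ft[w_i])=N_G(T[w_i])$ from Proposition \ref{coro:centraliser}, and the root-hyperplane characterization of regular elements in Section \ref{sss:regularincartan}.

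First I would verify the representative claim. Combining the bijection between $W$-conjugacy classes and $G^F$-conjugacy classes of $F$-stable maximal tori of $G$ (Proposition \ref{prop:FstableTorifacts}) with the identity $\Ad_g(\mathrm{Lie}(H))=\mathrm{Lie}(gHg^{-1})$ used in the paragraph preceding Section \ref{sss:regularincartan}, the Lie algebras $\ft[w_i]=\mathrm{Lie}(T[w_i])$ represent pairwise distinct $G^F$-adjoint orbits among the Lie algebras of $F$-stable maximal tori, which is the first assertion.

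Next I would count regular elements in $\ft[w_i]^F$. By Section \ref{sss:regularincartan}, an element $X\in\ft[w_i]$ is regular precisely when $\alpha(X)\neq 0$ for every root $\alpha$ of $G$ relative to $T[w_i]$, so the regular set in $\ft[w_i]^F$ is the complement of $\bigcup_{\alpha\in\Phi}(\ker\alpha)^F$. Since $\ft[w_i]$ is a form of $\mathbb{G}_a^r$, one has $|\ft[w_i]^F|=q^r$, and each intersection of root kernels is a linear subspace whose $F$-fixed-point count is a polynomial in $q$ of strictly lower degree. Applying M\"obius inversion over the lattice of closed subsystems of $\Phi$, exactly as in \cite{lehrer1992rational} and \cite[\S 8.9]{humphreys1995conjugacy} in the group case, yields a count $g_i(q)$ that is a monic polynomial of degree $r=\mathrm{rank}(\fg)$.

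Finally, to pass from regular elements to $G^F$-adjoint orbits, suppose $X,X'\in\ft[w_i]^F$ are regular with $\Ad_g(X)=X'$ for some $g\in G^F$. Regularity forces $C_G(X)=T[w_i]=C_G(X')$, and the relation $C_G(X')=gC_G(X)g^{-1}$ then gives $g\in N_G(T[w_i])^F=N_G(\ft[w_i])^F$. By Proposition \ref{coro:centraliser} the $G^F$-orbit of $X$ inside $\ft[w_i]^F$ has size $|N_G(\ft[w_i])^F/T[w_i]^F|=|C_W(w_i)|=c_i$, so the number of orbits of regular elements is $g_i(q)/c_i$, as desired. The main technical point is the M\"obius-inversion step, but I expect it to run cleanly because the root-hyperplane arrangement in $\ft[w_i]$ behaves exactly as in the group setting after twisting by $w_i$, so the lower-order contributions do not interfere with monicity.
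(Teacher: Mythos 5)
The counting half of your proposal is essentially the paper's argument, but your treatment of the first assertion has a genuine gap. The claim is about the fixed-point sets $\ft[w_i]^F\subset \fg^F$, and the nontrivial content (the Lie-algebra analogue of \cite[Proposition 3.6.2]{carter1985finite}) is the implication: if $\Ad_g(\ft[w_i]^F)=\ft[w_j]^F$ for some $g\in G^F$, then already $\Ad_g(\ft[w_i])=\ft[w_j]$, hence $i=j$. Your argument only shows that the algebraic Cartan subalgebras $\ft[w_i]$ lie in pairwise distinct $G^F$-adjoint orbits (via Proposition \ref{prop:FstableTorifacts} and $\Ad_g(\mathrm{Lie}(H))=\mathrm{Lie}(gHg^{-1})$); a priori some $g\in G^F$ could carry the finite set $\ft[w_i]^F$ onto $\ft[w_j]^F$ without carrying the full Cartan subalgebras onto each other, and then $\ft[w_i]^F$ and $\ft[w_j]^F$ would not represent distinct orbits of the objects the lemma is actually about. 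The paper closes exactly this loophole with Proposition \ref{prop:lie-algebra-unique-contain}: from $\ft[w_j]^F=\Ad_g(\ft[w_i]^F)\subset \Ad_g(\ft[w_i])\cap\ft[w_j]$ and the fact that $\ft[w_j]^F$ is contained in a unique Cartan subalgebra, one gets $\Ad_g(\ft[w_i])=\ft[w_j]$. You need to insert this step (it is where the very good/regular characteristic hypotheses enter through the results of \cite{Letellier05} behind that proposition).

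The remaining two steps are fine and match the paper. For the count of regular elements, the paper works directly with $\ft[w_i]^F\setminus\bigcup_{\alpha\in\Phi_i}\mathrm{Ker}(d\alpha)^F$ and the bound $q>|\Phi_i|$, while you invoke M\"obius inversion over closed subsystems as in \cite{lehrer1992rational}; this is the alternative route the paper itself mentions in the group case, so it is acceptable, with two small cautions: regularity is detected by the differentials $d\alpha$ (not $\alpha$), and the individual kernels $\mathrm{Ker}(d\alpha)$ need not be $F$-stable since $F$ permutes $\Phi_i$ through the twist by $w_i$, so the inclusion--exclusion must be organised over $F$-stable intersections (this is where the regular-characteristic assumption is used). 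Your final step, deducing $C_G(X)=T[w_i]$ from regularity and connectedness, concluding $g\in N_G(T[w_i])^F=N_G(\ft[w_i])^F$, and applying Proposition \ref{coro:centraliser} to get orbits of size $c_i$ inside $\ft[w_i]^F$, is essentially the paper's argument and in fact makes explicit the point that $G^F$-conjugate regular elements of $\ft[w_i]^F$ are already $N_G(\ft[w_i])^F$-conjugate.
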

\begin{proof}
The proof of the first statement is similar to the proof of \cite[Proposition 3.6.2]{carter1985finite}. For completeness, we add the proof here. Let $\ft$ and $\ft'$ be Lie algebras of $F$-stable maximal tori of $G$. Our claim is to show that $\ft$ and $\ft'$ are $G^F$-adjoint if and only if $\ft^F$ and $\ft'^F$ are $G^F$-adjoint. One direction is obvious. For the other, let us assume that $Ad_g(\ft^F)=\ft'^F$ for some $g\in G^F$. Then $\ft'^F\subset Ad_g(\ft)\cap \ft'$. Since $Ad_g(\ft)$ is the Lie algebra of the maximal torus $gTg^{-1},$ it follows from Proposition \ref{prop:lie-algebra-unique-contain} that $Ad_g(\ft)=\ft'.$ This proves the first statement.

Let $\Phi_i$ be the root system of $G$ over $T_i:=T[w_i]$. Then from \cite[Proposition 2.6.4]{Letellier05}, $X\in \ft[w_i]:=\ft_i$ is regular if and only if $d\alpha(X)\neq 0$ for all $\alpha\in \Phi_i$. 
We use the same logic as in the proof of \cite[Lemma 2.6.17]{Letellier05}, and therefore we need to consider the size of $\displaystyle \ft_i^F\setminus \underset{\alpha\in \Phi_i}{\bigcup}   {\mathrm{Ker}}(d\alpha)^F.$
Since $\dim({\mathrm{Ker}}(d\alpha))$ is strictly smaller than $\dim(\ft_i)$ (see \cite[Proof of Lemma 2.6.17]{Letellier05}), we can check that $\ft_i^F\setminus \underset{\alpha\in \Phi_i}{\cup} {\mathrm{Ker}}(d\alpha)^F$ is non-empty if $q>|\Phi_i|$.
 Furthermore, this implies that the number of regular semisimple elements in $\ft_i^F$ is monic with degree $\mathrm{rank}(\fg)$ since $|\ft_i^F|=q^{\mathrm{dim}(\ft_i)}$ (this equality is true because $\ft_i^F$ is a vector space).
 
 With Proposition \ref{coro:centraliser} and the fact that $C_G(X)$ is connected as illustrated in Section \ref{sss:regularincartan} for a regular element $X$ in $\ft_i$, we have that $Ad_{w'}X\in \ft_i$ is not equal to $X$ for any non-identity element $w'\in N_G(\ft_i)^F/T_i^F$. This is because if $Ad_{w'}(X)=X$, then $C_G(X)$ is not connected by \cite[Proposition 2.6.4 (i)]{Letellier05}. Therefore, we can see that the number of adjoint orbits of regular elements in $\ft_i^F$ over $G^F$ is $ \frac{g_i}{c_i}$.
\end{proof}

Now, we also have an additive analogue of Proposition \ref{prop:irresize} using the following lemma.

\begin{lem}{\cite[Corollary 2.5]{lehrer1992rational}}\label{lem:semisimple-contrib-Ffixed}
The number of semisimple adjoint orbits in $\fg^{F}$ under the action of $G^{F}$ is $q^r$. 
\end{lem}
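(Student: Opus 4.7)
The plan is to establish a bijection between semisimple adjoint orbits in $\fg^F$ and $F$-fixed points of the quotient variety $\ft/W$, and then to identify $\ft/W$ with affine $r$-space so that counting becomes immediate. This parallels the strategy in \cite[Theorem 3.7.6]{carter1985finite} for the group case.

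First I would show that every semisimple $X\in\fg^F$ is $G$-adjoint to an element of some $F$-stable Cartan subalgebra, and that after replacing $\ft$ by a suitable twist $\ft[w]$, we may in fact assume $X\in\ft[w]^F$. Combined with the torus/Cartan correspondence described just before Section \ref{sss:regularincartan}, this reduces the counting problem to the following Lie-algebra analogue of \cite[Proposition 3.7.3]{carter1985finite}: two semisimple elements $X_1,X_2\in\ft$ lie in the same $G^F$-adjoint orbit if and only if they are in the same $W$-orbit. The key input here is that centralisers of semisimple elements are connected in very good characteristic, cf. \cite[Proposition 2.6.4]{Letellier05}; the same short argument used in Proposition \ref{prop:properties-counts-of-regular-semisimple} for $G^F$-classes of regular tori then transports to the additive setting. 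This gives a bijection between semisimple $G^F$-adjoint orbits in $\fg^F$ and $(\ft/W)^F$.

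Next I would invoke the Chevalley--Shephard--Todd theorem on $W$-invariants. Under the assumption that $p$ is very good and regular for $G$, the ring $k[\ft]^W$ is a polynomial algebra on $r$ homogeneous generators, so $\ft/W\simeq \mathbb{A}^r$ as $k$-varieties. Since $G$ is split and untwisted, $F$ acts on $\ft$ and hence on $\ft/W$ as a standard Frobenius compatible with this identification, so $|(\ft/W)^F|=q^r$. Combining with the previous bijection yields the claim.

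The main obstacle is the second step: in positive characteristic the Chevalley invariant theorem is not automatic, and this is exactly what motivates the ``regular'' hypothesis on $p$ imported from \cite[page 242]{lehrer1992rational}. Once the polynomial structure of $k[\ft]^W$ is granted, the $F$-fixed point count is trivial, and the orbit bijection is a mild adaptation of arguments already deployed in Section \ref{subs:F-stable-tori}. One could alternatively avoid the invariant-theoretic step by exhibiting a direct bijection with $\{(\ft[w]^F\text{-orbits of semisimple elements})\}$ weighted by $1/c_i$, summing a Burnside-type count over conjugacy classes of $W$; this would mirror the proof of Proposition \ref{prop:properties-counts-of-regular-semisimple} but requires the same underlying facts.
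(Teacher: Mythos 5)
The paper does not actually prove this lemma: it is imported verbatim from \cite[Corollary 2.5]{lehrer1992rational}, and the only argument the paper supplies is the remark immediately after it, namely that Lehrer's count of $F$-stable semisimple $G$-orbits on $\fg$ coincides with the count of $G^F$-orbits on semisimple elements of $\fg^F$ because $C_G(X)$ is connected for semisimple $X$ (so, by Lang's theorem, every $F$-stable orbit has rational points and they form a single $G^F$-class). Your first step --- passing through $F$-stable Cartan subalgebras and connectedness of centralisers to reduce the count to $F$-stable $W$-orbits on $\ft$, i.e.\ to $(\ft/W)^F$ --- is exactly this bridge and is sound under the paper's standing hypotheses.

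The gap is in your second step. Chevalley--Shephard--Todd requires $|W|$ to be invertible in $k$, and ``very good and regular'' does not give this: type $A_4$ or $B_4$ with $p=3$ is very good, yet $p$ divides $|W|$, so the action is modular and the theorem you invoke does not apply as stated. Moreover, Lehrer's ``regular'' condition (page 242 of \cite{lehrer1992rational}) is about the root hyperplane arrangement in $\ft$ over $\F_q$ --- it is what the paper needs later for the regular semisimple count via \cite[Theorem 4.11]{lehrer1992rational} --- and is not an invariant-theoretic hypothesis, so it does not repair this step. What would repair it is Demazure's theorem on Weyl-group invariants of the (co)character lattice away from torsion primes (very good primes are non-torsion), but the cleanest fix is the elementary argument you only gesture at in your final sentence: since $F$ commutes with the $W$-action (split case), the number of $F$-stable $W$-orbits on $\ft$ equals $\frac{1}{|W|}\sum_{w\in W}\lvert \ft^{wF}\rvert$, and each $wF$ is a Frobenius-type map on the affine space $\ft$, so $\lvert \ft^{wF}\rvert=q^r$ and the total is $q^r$ with no invariant theory at all. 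With that substitution your argument closes and amounts to an unpacking of the citation that the paper leaves to Lehrer.
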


\begin{rem}
Lehrer \cite[Corollary 2.5]{lehrer1992rational} considers $F$-stable semisimple orbits in $\fg_n$ under $G_n$-action, but there is a bijection between such $F$-stable semisimple orbits and semisimple adjoint orbits in $\fg_n^{F_n}$ under $G_n^{F_n}$-action. This is because $C_{G_n}(X)$ is connected for any semisimple element $X$ in $\fg_n$.
\end{rem}

\begin{prop}\label{prop:additiveirresize}
The number of adjoint orbits $|[\fg^{F}]|$ of $\fg^F$ under $G^F$-action is a monic polynomial whose degree is the rank of $\fg$.   \end{prop}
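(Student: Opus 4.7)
The plan is to use Jordan decomposition together with a stratification of semisimple orbits by centralizer type. Every $X\in \fg^F$ decomposes uniquely as $X=X_s+X_n$ with $X_s$ semisimple, $X_n$ nilpotent, and $[X_s,X_n]=0$; under the very good characteristic assumption, $C_G(X_s)$ is a connected Levi subgroup, so the adjoint $G^F$-orbit of $X$ is determined by the pair consisting of the semisimple orbit of $X_s$ and a nilpotent $C_G(X_s)^F$-orbit on $C_\fg(X_s)^F$. Hence
\begin{equation*}
|[\fg^F]|=\sum_{[X_s]}\,\#\{\text{nilpotent } C_G(X_s)^F\text{-orbits on } C_\fg(X_s)^F\},
\end{equation*}
where the sum runs over semisimple adjoint orbits of $\fg^F$.

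First I would isolate the regular semisimple contribution. If $X_s$ is regular then $C_G(X_s)=T[w]$ is a maximal torus and $C_\fg(X_s)=\ft[w]$ contains only $0$ as a nilpotent element, so each regular $[X_s]$ contributes exactly one term to the sum above. By Lemma \ref{lem:additivef_{i,G}} the total number of regular semisimple orbits is $|[\fg_{rss}^F]|=\sum_{i=1}^{\tau} g_i(q)/c_i$; each $g_i$ is monic of degree $r$, and $\sum_{i=1}^{\tau} 1/c_i=1$ by equation \eqref{eq:sumofreciprocalci}, so $|[\fg_{rss}^F]|$ is itself a monic polynomial in $q$ of degree $r$.

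The remaining task is to show the non-regular contributions add up to a polynomial in $q$ of strictly smaller degree. My approach would be to stratify non-regular semisimple orbits by the $G$-conjugacy class of their centralizer $L=C_G(X_s)$, which is a proper Levi of positive semisimple rank. For each such $L$, the number of semisimple orbits with centralizer conjugate to $L$ should be a polynomial in $q$ of degree $\dim Z(L)<r$, obtained by a M\"obius-inversion argument on closed subsystems of roots in the spirit of \cite{lehrer1992rational}; multiplying by the number $n(L)$ of nilpotent $L^F$-orbits on $\mathrm{Lie}(L)^F$ (available via the Springer isomorphism $\varpi$ and the fact that nilpotent orbits are finite and behave uniformly in very good characteristic) and summing over the finitely many conjugacy types of Levi subgroups yields a polynomial of degree strictly less than $r$. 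Adding this to $|[\fg_{rss}^F]|$ produces the desired monic polynomial of degree $r$. The principal obstacle is to verify, sharply enough to obtain exact polynomials (rather than merely polynomials on residue classes), that both the semisimple-stratum counts and the nilpotent counts $n(L)$ have the required $q$-independence; this requires careful tracking of the trivial $F$-action (coming from our split, untwisted hypothesis) on $W$-orbits in $\ft$ along each stratum, and on nilpotent orbits in $\mathrm{Lie}(L)$ via the Springer correspondence.
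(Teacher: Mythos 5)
Your overall skeleton is the same as the paper's: Jordan decomposition, the fact that a regular semisimple part admits no nonzero commuting nilpotent, and the finiteness of nilpotent orbit counts in centralisers. The difference lies in how the lower-order terms are handled, and there your route has a genuine unresolved step, which you yourself flag. Your plan hinges on showing that, for each Levi type $L$, the number of semisimple orbits with centraliser conjugate to $L$ is an honest polynomial in $q$ of degree $\dim Z(L)$, not merely a polynomial on residue classes; establishing this is essentially the content of the paper's open Question \ref{que:porc} (in the group setting such type counts are only PORC), so as written the argument for the non-regular contribution is incomplete.

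The paper avoids this entirely. Its monic main term is not the regular semisimple orbit count but the count of \emph{all} semisimple adjoint orbits, which equals $q^r$ exactly by Lehrer (Lemma \ref{lem:semisimple-contrib-Ffixed}); it then needs only a \emph{degree bound} on the rest: every non-semisimple orbit has non-regular semisimple part, the number of non-regular semisimple orbits has degree at most $r-1$ (read off from the proof of Lemma \ref{lem:additivef_{i,G}}, since non-regular elements of $\ft_i^F$ lie in the union of the hyperplanes $\mathrm{Ker}(d\alpha)^F$), and the number of nilpotent orbits in each centraliser is bounded independently of $q$ by \cite[Proposition 37]{KNWG}. No stratum-by-stratum exact count is needed, because only the leading-order behaviour of $|[\fg^F]|$ is ever used. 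Your argument is repaired by the same move: keep your main term $|[\fg_{rss}^F]|$ (which is fine by Lemma \ref{lem:additivef_{i,G}} and equation \eqref{eq:sumofreciprocalci}), or better the exact $q^r$ of Lemma \ref{lem:semisimple-contrib-Ffixed}, and then merely bound, rather than compute, the non-regular contribution by (number of non-regular semisimple orbits) times a $q$-independent constant.
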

\begin{proof}Note that the size $|[\fg^F]|$ is the sum of semisimple orbits and non-semisimple orbits.
   Recall the fact that the number of nilpotent orbits in $C_\fg(t)^F$ is finite and independent of $q$ for any  semisimple element $t\in \fg^F$ \cite[Proposition 37]{KNWG}. Further, for $t$ a regular semisimple element $C_\fg(t)$ contains no nilpotent elements except the identity. From the fact that  and the number of non-regular semisimple orbits is degree at most $r-1$ from the proof of Lemma \ref{lem:additivef_{i,G}}, we conclude this proposition.
\end{proof}
An upper bound of $|[\fg^F]|$ will be considered later.

\subsubsection{Sketch of the proof of Theorem \ref{thm:fixedliealgebra}} The proof of Theorem \ref{thm:fixedliealgebra} is completely analogous to the proof of Theorem \ref{thm:fixedgroup} since every statement is similar has an analogue in the proof of the group version. Therefore, we only briefly explain how to prove the additive analogue. As $n$ approaches infinity, the size of finite fields increases, and so we assume that the finite field is sufficiently large. We will set a lower bound which has a similar form to equation \eqref{eq:lowerbound_content}, and then follow the steps in Section \ref{ss:lowerbound} and \ref{ss:proofmain2}. 
  From Lemma \ref{lem:vanishingvalueadditive} and Lemma \ref{lem:additivef_{i,G}}, we have the following lower bound:
\[
\frac{|\{(\mathcal{F}_n(\mathcal{O}_i),O_s) \in \mathfrak{F}_n\times [\fg_n^{F_n}]\,|\, \mathcal{F}_n(\mathcal{O}_i)(O_s)=0\}|}{|\mathfrak{F}_n|\times |[\fg_n^{F_n}]|} 
\geq 
\frac{\left|\left\{ (O_x ,O_y)\in [\fg_{n,rss}^{F_n}]\times [\fg_{n,rss}^{F_n}] \,\middle\vert\,  \substack{i\in \{1,2, \ldots , \tau \} \\O_x \cap \ft_n[w_i]^{F_n}\neq \emptyset \\ O_y \cap \ft[w_i]^{F_n}=\emptyset } \right\}\right|}
{  |[\fg_n^{F_n}]|^2} ,
\]
since $|\mathfrak{F}_n|= |[\fg_n^{F_n}]|$.
Here, $O_x$ is the adjoint orbit of an element $x$, $\fg_{n,rss}$ denotes the set of regular semisimple elements in $\fg_n$ and $[\fg_{n,rss}^{F_n}]$ the set of adjoint orbits under $G_n^{F_n}$-action in $\fg_n$. Then we can easily see that 
\[
\frac{\left|\left\{ (O_x ,O_y)\in [\fg_{n,rss}^{F_n}]\times [\fg_{n,rss}^{F_n}] \,\middle\vert\,  \substack{i\in \{1,2, \ldots , \tau \} \\O_x \cap \ft_n[w_i]^{F_n}\neq \emptyset \\ O_y \cap \ft[w_i]^{F_n}=\emptyset } \right\}\right|}
{  |[\fg_n^{F_n}]|^2}=\frac{|[\fg_{n,rss}^{F_n}]|^2}{|[\fg_n^{F_n}]|^2}-\frac{1}{|[\fg_n^{F_n}]|^2}\sum_{i=1}^\tau \frac{g_{n,i}^2}{c_i^2}
\]
where $g_{n,i}$ is the number of regular elements in $\ft_n[w_i]^{F_n}$. 

To finish the proof of Theorem \ref{thm:fixedliealgebra}, we need to establish a suitable lower bound of the right-hand side of the above equation. Recall that we  denote the rank of $G_n$ by $r$ (this does not depend on $n$ in the context of Theorem \ref{thm:fixedliealgebra}). We have two facts: $|\ft_n[w_i]^{F_n}|=q_n^r\geq g_{n,i}$ for all $i$ and the number of semisimple adjoint orbits in $\fg_n^{F_n}$ under $G_n^{F_n}$ action is $q_n^r$.  Using these facts, we get the following lower bound:
\[\begin{split}
\frac{|\{(\mathcal{F}_n(\mathcal{O}_x),O_y) \in \mathfrak{F}_n\times [\fg_n^{F_n}]\,|\, \mathcal{F}_n(\mathcal{O}_x)(O_y)=0\}|}{|\mathfrak{F}_n|\times |[\fg_n^{F_n}]|} 
& \geq  \frac{|[\fg_{n,rss}^{F_n}]|^2}{|[\fg_n^{F_n}]|^2}-\sum_{i=1}^\tau \frac{q_n^{2r}}{c_i^2\cdot q_n^{2r}}\\ &=\frac{|[\fg_{n,rss}^{F_n}]|^2}{|[\fg_n^{F_n}]|^2}-\sum_{i=1}^\tau \frac{1}{c_i^2}.
\end{split}\]
Now, let us consider the term $\frac{|[\fg_{n,rss}^{F_n}]|^2}{|[\fg_n^{F_n}]|^2}$. From \cite[Theorem 4.11]{lehrer1992rational}, we have that for a suitable polynomial $g$ the number $|[g_{n,rss}^{F_n}]|=g(q_n)$ for all $n$. (Here we use the assumption that the characteristic is regular.) In addition, we have that $\displaystyle g(q_n)=\sum_{i=1}^{\tau} \frac{g_i(q_n)}{c_i}.$ Thus by Lemma \ref{lem:additivef_{i,G}} and equation \eqref{eq:sumofreciprocalci} the polynomial $g(x)$ is monic of degree $r$.
Note that the polynomials $g(x)\in \mathbb{Z}[x]$ are listed in \cite[page 244]{lehrer1992rational} for each simple type. As the last component, we give an upper bound for $|[\fg_n^{F_n}]|$.

\begin{lem}\label{lem:upperbound-gnFnfixed}
There is a constant $C,$ independent of $n$, such that $|[\fg_n^{F_n}]|\leq q_n^r+C (q_n^r-g(q_n)).$
\end{lem}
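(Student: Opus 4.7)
The plan is to count adjoint orbits via Jordan decomposition. Every $X \in \fg_n^{F_n}$ has a unique decomposition $X = X_s + X_n$ with $X_s$ semisimple, $X_n$ nilpotent and $[X_s,X_n]=0$, and the $G_n^{F_n}$-adjoint orbit of $X$ is determined by the $G_n^{F_n}$-orbit of $X_s$ together with a $C_{G_n}(X_s)^{F_n}$-orbit of a nilpotent element in $C_{\fg_n}(X_s)^{F_n}$ (using that $C_{G_n}(X_s)$ is connected, which follows from our standing hypotheses on $G_n$). This yields the identity
\[
|[\fg_n^{F_n}]| \;=\; \sum_{O_s} N(O_s),
\]
where the sum runs over semisimple $G_n^{F_n}$-adjoint orbits $O_s \subseteq \fg_n^{F_n}$ and $N(O_s)$ denotes the number of $C_{G_n}(X_s)^{F_n}$-orbits of nilpotent elements in $C_{\fg_n}(X_s)^{F_n}$ for any $X_s \in O_s$.

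Next I would split this sum according to whether $O_s$ is regular. If $X_s$ is regular semisimple then, as recorded in Section \ref{sss:regularincartan}, the centraliser $C_{\fg_n}(X_s)$ is a Cartan subalgebra and contains no nonzero nilpotent element; hence $N(O_s)=1$ and the regular locus contributes exactly $g(q_n)$ to the above sum. For non-regular semisimple $X_s$, the group $C_{G_n}(X_s)$ is a proper Levi subgroup of $G_n$, and \cite[Proposition 37]{KNWG} asserts that the number of nilpotent orbits in $C_{\fg_n}(X_s)^{F_n}$ is finite and independent of $q_n$ (the hypotheses being guaranteed by our standing very good and regular characteristic assumption).

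The heart of the argument is producing an upper bound $M$ for $N(O_s)$ that is uniform in both $O_s$ and $n$. In the setting of Theorem \ref{thm:fixedliealgebra} the root datum of $G_n$ is fixed (coming from $\mathbb{G}$ in case (A1) or from $G$ in case (A2)), so there are only finitely many conjugacy classes of Levi subgroups of $G_n$, and the isomorphism type of $C_{G_n}(X_s)$ ranges over this finite set. Taking $M$ to be the maximum of the corresponding (finitely many, $q$-independent) nilpotent orbit counts yields the required uniform bound. Combined with Lemma \ref{lem:semisimple-contrib-Ffixed}, which gives $q_n^r - g(q_n)$ non-regular semisimple orbits, this produces
\[
|[\fg_n^{F_n}]| \;\leq\; g(q_n) + M\bigl(q_n^r - g(q_n)\bigr) \;=\; q_n^r + (M-1)\bigl(q_n^r - g(q_n)\bigr),
\]
so one may take $C := M-1$.

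The main technical point I expect to encounter is justifying the uniform bound on $N(O_s)$: it rests on combining the $q$-independence of the nilpotent-orbit counts from \cite[Proposition 37]{KNWG} with the finiteness of the set of Levi subgroups of $G_n$ up to conjugation. The latter is automatic in this fixed-Lie-algebra setting because the root datum does not vary with $n$, so the constant $C$ indeed depends only on the fixed datum and not on $n$; this is precisely the feature that fails in the more ambitious framework of Conjecture \ref{conj:additiveanalogue}, which is why Section \ref{sss:conjectureexplanation} isolates it as the missing ingredient there.
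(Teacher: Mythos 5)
Your proposal is correct and follows essentially the same route as the paper: split orbits via Jordan decomposition, note that regular semisimple parts admit no nonzero nilpotent component, and bound the nilpotent contributions over non-regular semisimple orbits using the finitely many Levi subgroups (up to conjugacy, fixed since the root datum does not vary with $n$) together with the $q$-independent nilpotent-orbit counts of \cite[Proposition 37]{KNWG}, then combine with Lemma \ref{lem:semisimple-contrib-Ffixed}. Your write-up is in fact slightly more explicit than the paper's about the parametrisation of orbits by pairs (semisimple orbit, nilpotent $C_{G_n}(X_s)^{F_n}$-orbit) and the role of connectedness of $C_{G_n}(X_s)$, but the argument is the same.
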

\begin{proof}
The semisimple adjoint orbits contribute $q_n^r$ by Lemma \ref{lem:semisimple-contrib-Ffixed}. Consider an element $Y$ in a non-semisimple adjoint orbit. Its Jordan decomposition is $Y=Y_s+Y_n$, where $Y_n$ is a nilpotent element of $C_\fg(Y_s).$ In particular, if $Y_s$ is regular, then $Y_n$ is $0$. If $Y_s$ is not regular, then $Y_n$ is in a proper Levi subalgebra of $\fg.$ Up to $G$-action, there are only finitely many proper Levi subalgebras of $\fg,$ and by \cite[Proposition 37]{KNWG} each has only finitely many nipotent orbits. Thus there is a constant $C$ such that there are at most $C$ choices for $Y_n$ for each $Y_s.$ This completes the proof.  
\end{proof}
Observe that since $g(x)$ is monic with degree $r$, so is the upper bound $q_n^r+C(q_n^r-g(q_n)).$
Then we can finish the proof of Theorem \ref{thm:fixedliealgebra} by showing that $\underset{n \rightarrow \infty}{\mathrm{lim}}\frac{|[\fg_{n,rss}^{F_n}]|^2}{|[\fg_n^{F_n}]|^2}=1$. This can be proved using the same method as in the proof of Theorem \ref{thm:fixedgroup}, as carried out in Section \ref{ss:proofmain2}.

\subsubsection{A remark on an additive analogue of Theorem \ref{thm:main}}\label{sss:conjectureexplanation} 

Recall that we proposed an additive analogue of Theorem~\ref{thm:main} in Conjecture~\ref{conj:additiveanalogue}. 
Now, consider the sequence $\{\mathfrak{g}_n\}_{n=1}^\infty$ introduced in Conjecture~\ref{conj:additiveanalogue}. 
To prove this conjecture, we need to find a uniform upper bound for $|[\mathfrak{g}_n^{F_n}]|$ for all $n$, analogous to the group case, where we have
$|[H^F]| \leq q^r + 40q^{r-1}$
as in equation \eqref{eq:H-class-estimate-simple}. (Recall that this bound was used in Corollary \ref{cor:simple-group-lower-bound}, which in turn played a role in \ref{ss:proof-main}.) 
If such a uniform upper bound can be established, the same steps as used in the proof of Theorem~\ref{thm:main} lead to a proof of Conjecture~\ref{conj:additiveanalogue}. 
This motivates the following question.

\begin{ques}\label{que:missingstep-conj6}
Does there exist a polynomial $h(x) \in \mathbb{C}[x]$ such that 
\[
|\mathfrak{g}^F| \leq h(q)
\]
for any reductive Lie algebra $\mathfrak{g}$ defined over an algebraically closed field $k$ of positive characteristic, with Frobenius map $F = F_q$?
\end{ques}

\subsubsection{A problem about polynomiality on residue classes}\label{sssect:porc}
For a finite reductive group $G^F$, we say that two elements are of the same type if and only if their centralisers are conjugate. When the derived subgroup of $G$ is simply connected, the number of elements of the same type is a polynomial on residue classes; see \cite{deriziotis, BK23}. Motivated by this result, it is natural to consider an additive analogue of this problem. 

For a finite Lie algebra $\mathfrak{g}^F$, we again say that two elements $X$ and $Y$ are of the same type if and only if $C_G(X)$ and $C_G(Y)$ are conjugate. We denote by $\xi(\mathfrak{g})$ the set of types of $\mathfrak{g}$, and by $\mathcal{T}$ the type map from $\mathfrak{g}$ to $\xi(\mathfrak{g})$. We conjecture that the number of elements of the same type is polynomial on residue classes. We believe this would be an interesting problem to investigate. 

\begin{ques}\label{que:porc}
	Let $\mathfrak{g}$ be a connected reductive Lie algebra defined over an algebraically closed field $k$ of characteristic $p$. Let $F_{q}$ denote the Frobenius map for some $q=p^e$. Does there exist a positive integer $n_{\mathfrak{g}}$ depending on $\mathfrak{g}$ and polynomials 
	$h_{\xi,i}(x) \in \mathbb{C}[x]$ for $0\leq i\leq n_{\mathfrak{g}}-1$, such that for every $i$ (with $0\leq i\leq n_{\mathfrak{g}}-1$) and $\xi\in \mathcal{T}(X) $, and for each $q \equiv i \pmod{n_{\mathfrak{g}}}$ we have
	\[
	|\{X \in \mathfrak{g}^{F_q} \mid \mathcal{T}(X) = \xi\}| = h_{\xi,i}(q)?
	\]
\end{ques}


\begin{rem}
If Question~\ref{que:porc} has a positive answer, then we can remove the assumption that the characteristic $p_n$ is regular in Theorem~\ref{thm:fixedliealgebra} and Conjecture~\ref{conj:additiveanalogue}. This is because only finitely many polynomials contribute to the count $|[g_{n,rss}^{F_n}]|$.
\end{rem}

\bigskip
\noindent \textbf{Acknowledgments}		
GyeongHyeon Nam was supported by the National Research Foundation of Korea (NRF) grant funded by the Korea government (MSIT) (No. RS-2024-00334558) and Oscar Kivinen's Väisälä project grant of the Finnish Academy of Science and Letters.
We are very grateful to Michael J. Larsen for helpful advice.

\begin{bibdiv}
\begin{biblist}
\bib{bate}{article}{
	title={Complete reducibility and separability},
	author={Bate, M.},
	 author={Martin, B.},
 author={R{\"o}hrle, G.},
  author={ Tange, R.},
	journal={Transactions of the American Mathematical Society},
	volume={362},
	number={8},
	pages={4283--4311},
	year={2010}
}

\bib{bendel}{article}{
  title={Cohomology of algebraic groups, Lie algebras, and finite groups of Lie type},
  author={Bendel, C. P.},
  journal={Categorical, Combinatorial and Geometric Representation Theory and Related Topics},
  volume={108},
  pages={35},
  year={2024},
  publisher={American Mathematical Society}
}

\bib{blackburn2012probability}{article}{
  title={The probability that a pair of elements of a finite group are conjugate},
  author={Blackburn, S. R.},
   author={Britnell, J. R.},
    author={ Wildon, M.},
  journal={Journal of the London Mathematical Society},
  volume={86},
  number={3},
  pages={755--778},
  year={2012},
  publisher={Oxford University Press}
}

\bib{bourbaki}{book}{
	title={Lie groups and Lie algebras: Chapters 4-6},
	author={Bourbaki, N.},
	volume={1},
	year={1989},
	publisher={Springer Science \& Business Media}
}

\bib{BK23}{article}{
  title={Character stacks are PORC count},
  author={Bridger, N.},
   author={ Kamgarpour, M.},
  journal={Journal of the Australian Mathematical Society},
  volume={115},
  number={3},
  pages={289--310},
  year={2023},
  publisher={Cambridge University Press}
}

\bib{carter1972conjugacy}{article}{
  title={Conjugacy classes in the Weyl group},
  author={Carter, R. W.},
  journal={Compositio Mathematica},
  volume={25},
  number={1},
  pages={1--59},
  year={1972}
}

\bib{carter1985finite}{book}{
  title={Finite groups of Lie type: Conjugacy classes and complex characters},
  author={Carter, R. W.},
  journal={Pure Appl. Math.},
  volume={44},
  year={1985}
}

\bib{DL}{article}{
  title={Representations of reductive groups over finite fields},
  author={Deligne, P.},
   author={ Lusztig, G.},
  journal={Annals of Mathematics},
  volume={103},
  number={1},
  pages={103--161},
  year={1976},
  publisher={JSTOR}
}

\bib{deriziotis}{article}{
  title={On the number of conjugacy classes in finite groups of Lie type},
  author={Deriziotis, D. I.},
  journal={Communications in Algebra},
  volume={13},
  number={5},
  pages={1019--1045},
  year={1985},
  publisher={Taylor \& Francis}
}
\bib{digne}{book}{
  title={Representations of finite groups of Lie type},
  author={Digne, F.},
   author={ Michel, J.},
  volume={95},
  year={2020},
  publisher={Cambridge University Press}
}


\bib{fulman1999}{article}{
	title={Counting semisimple orbits of finite Lie algebras by genus},
	author={Fulman, J.},
	journal={Journal of Algebra},
	volume={217},
	number={1},
	pages={170--179},
	year={1999},
	publisher={Elsevier}
}

\bib{fulman2012bounds}{article}{
	title={Bounds on the number and sizes of conjugacy classes in finite Chevalley groups with applications to derangements},
	author={Fulman, J.},
	author={ Guralnick, R.},
	journal={Transactions of the American Mathematical Society},
	volume={364},
	number={6},
	pages={3023--3070},
	year={2012}
}

\bib{gallagher2022many}{article}{
  title={Many Zeros of Many Characters of $\mathrm{GL} (n, q)$},
  author={Gallagher, P. X.},
 author={ Larsen, M. J.},
 author={ Miller, A. R.},
  journal={International Mathematics Research Notices},
  volume={2022},
  number={6},
  pages={4376--4386},
  year={2022},
  publisher={Oxford University Press}
}

\bib{geck2018first}{article}{
	title={A first guide to the character theory of finite groups of Lie type},
	author={Geck, M.},
	journal={Local representation theory and simple groups},
	pages={63--106},
	year={2018}
}

\bib{geck2020character}{book}{
  title={The character theory of finite groups of Lie type: a guided tour},
  author={Geck, M.},
   author={ Malle, G.},
  volume={187},
  year={2020},
  publisher={Cambridge University Press}
}

\bib{guralnick2001p}{article}{
	title={On $p$-singular elements in Chevalley groups in characteristic $p$},
	author={Guralnick, R. M.},
	author={  L{\"u}beck, F.},
	journal={Groups and computation},
	volume={3},
	number={8},
	pages={169--182},
	year={2001}
}

\bib{HLRV}{article}{
    AUTHOR = {Hausel, T.},
     author={ Letellier, E.},
      author={              Rodriguez-Villegas, F.},
     TITLE = {Arithmetic harmonic analysis on character and quiver
              varieties},
   JOURNAL = {Duke Math. J.},
  FJOURNAL = {Duke Mathematical Journal},
    VOLUME = {160},
      YEAR = {2011},
    NUMBER = {2},
     PAGES = {323--400},
      ISSN = {0012-7094,1547-7398},
   MRCLASS = {14D20 (05E05 14F35 16G10 20C15 20G20)},
  MRNUMBER = {2852119},
MRREVIEWER = {Bogdan\ Ion},
       DOI = {10.1215/00127094-1444258},
       URL = {https://doi.org/10.1215/00127094-1444258},
}

\bib{herpel}{article}{
	title={On the smoothness of centralizers in reductive groups},
	author={Herpel, S.},
	journal={Transactions of the American Mathematical Society},
	volume={365},
	number={7},
	pages={3753--3774},
	year={2013}
}

\bib{humphreys1995conjugacy}{book}{
	title={Conjugacy classes in semisimple algebraic groups},
	author={Humphreys, J. E.},
	number={43},
	year={1995},
	publisher={American Mathematical Soc.}
}

\bib{humphreys2012introduction}{book}{
	title={Introduction to Lie algebras and representation theory},
	author={Humphreys, J. E.},
	volume={9},
	year={2012},
	publisher={Springer Science \& Business Media}
}

\bib{KNWG}{article}{
	title={Counting points on character varieties},
	author={Kamgarpour, M.},
	 author={ Nam, G.},
	  author={ Whitbread, B.},
	   author={ Giannini, S.},
	journal={To appear in Mathematical Research Letters}
}


\bib{larsen2020sparsity}{article}{
  author={Larsen, M. J.},
   author={ Miller, A. R.},
     TITLE = {The sparsity of character tables of high rank groups of {L}ie
              type},
  JOURNAL = {Representation Theory. An Electronic Journal of the American
              Mathematical Society},
    VOLUME = {25},
      YEAR = {2021},
     PAGES = {173--192},
      ISSN = {1088-4165},
}

\bib{lehrer1992rational}{article}{
	title={Rational tori, semisimple orbits and the topology of hyperplane complements},
	author={Lehrer, G. I.},
	journal={Commentarii Mathematici Helvetici},
	volume={67},
	pages={226--251},
	year={1992},
	publisher={Springer}
}

\bib{lehrer1996space}{article}{
  title={The space of invariant functions on a finite Lie algebra},
  author={Lehrer, G. I.},
  journal={Transactions of the American Mathematical Society},
  volume={348},
  number={1},
  pages={31--50},
  year={1996}
}

\bib{Letellier05}{book}{
author={Letellier, E.},
title={Fourier transforms of invariant functions on finite reductive Lie algebras},
series={Lecture Notes in Mathematics},
volume={1859},
publisher={Springer-Verlag, Berlin},
date={2005},
}
\bib{lusztig1984characters}{book}{
  title={Characters of reductive groups over a finite field},
  author={Lusztig, G.},
  number={107},
  year={1984},
  publisher={Princeton University Press}
}

\bib{PR16}{article}{
  title={On the splitting fields of generic elements in Zariski dense subgroups},
  author={Pisolkar, S.},
   authro={ Rajan, C. S.},
  journal={Journal of Algebra},
  volume={457},
  pages={106--128},
  year={2016},
  publisher={Elsevier}
}

\bib{taylor}{article}{  
title={The structure of root data and smooth regular embeddings of reductive groups},
  author={Taylor, J.},
  journal={Proceedings of the Edinburgh Mathematical Society},
  volume={62},
  number={2},
  pages={523--552},
  year={2019},
  publisher={Cambridge University Press}
}
 
\end{biblist}
\end{bibdiv}

\end{document}